\DeclareMathOperator{\Def}{Def}
\DeclareMathOperator{\Hom}{Hom}
\DeclareMathOperator{\id}{id}
\DeclareMathOperator{\kt}{KT}
\DeclareMathOperator{\mon}{Mon}
\DeclareMathOperator{\ns}{NS}
\DeclareMathOperator{\rk}{rk}
\DeclareMathOperator{\sig}{sign}
\newcommand{\C}{\mathbb{C}}
\newcommand{\Q}{\mathbb{Q}}
\newcommand{\K}{\mathbb{K}}
\newcommand{\R}{\mathbb{R}}
\newcommand{\Z}{\mathbb{Z}}
\newcommand{\Pb}{\mathbb{P}}
\newcommand{\frakM}{\mathfrak{M}}
\newcommand{\calC}{\mathcal{C}}
\newcommand{\calD}{\mathcal{D}}
\newcommand{\calK}{\mathcal{K}}
\newcommand{\calL}{\mathcal{L}}
\newcommand{\calM}{\mathcal{M}}
\newcommand{\calO}{\mathcal{O}}
\newcommand{\calT}{\mathcal{T}}
\newcommand{\calX}{\mathcal{X}}
\newcommand{\Mp}{{M^\perp}}
\newcommand{\gmp}{\Gamma_{\Mp}}
\newcommand{\kn}{$K3^{[n]}$}
\newcommand{\cx}{\calC_X}
\newcommand{\kx}{\calK_X}
\newcommand{\mk}{\frakM_{\Mp,\,\calK}}
\newcommand{\ml}{\frakM_{L_n}^0}
\newcommand{\monh}{\mon^2_{\textup{Hdg}}}
\newcommand{\sik}{\widetilde{\calK}^i_X}
\newcommand{\Ln}{L_{n}}
\newcommand{\Dp}{L_n(M)}
\newcommand{\Dl}{\Delta_M(L_n)}
\newcommand{\pk}{P_\calK}
\newcommand{\dd}{\delta}
\newcommand{\so}{\widetilde{O}}
\newcommand{\tx}{\calT_X}
\newcommand\isoarrow{\xrightarrow{\,\smash{\raisebox{-0.65ex}{\ensuremath{\sim}}}\,}}
\newtheorem{thm}{Theorem}[section]
\newtheorem{prop}[thm]{Proposition}
\newtheorem{lem}[thm]{Lemma}
\theoremstyle{definition}
\newtheorem{dfn}[thm]{Definition}
\newtheorem{rem}[thm]{Remark}
\newtheorem{exm}[thm]{Example}
\numberwithin{equation}{section}
\title[]{Non-symplectic involutions of irreducible symplectic manifolds of $K3^{[n]}$-type}
\date{\today}
\author{Malek Joumaah}
\address{Institut f\"ur Algebraische Geometrie, Leibniz Universit\"at Hannover, Welfengarten 1, 30060 Hannover, Germany}
\email{joumaah@math.uni-hannover.de}
\begin{document}

\begin{abstract}
This paper is concerned with non-symplectic involutions of irreducible symplectic manifolds of \kn-type. We will give a criterion for deformation equivalence and use this to give a lattice-theoretic description of all deformation types.
While moduli spaces of \kn-type manifolds with non-symplectic involutions are not necessarily Hausdorff,
we will construct quasi-projective moduli spaces for a certain well-behaved class of such pairs.
\end{abstract}

\subjclass[2010]{14J15, 14J50}

\maketitle

%%%%%%%%%%%%%%%%%%%%
%   INTRODUCTION   %
%%%%%%%%%%%%%%%%%%%%

\section{Introduction}

\thispagestyle{empty}  

In the last 35 years the study of automorphisms of K3 surfaces has attracted much attention.
Important results on the classification of such automorphisms have been obtained in particular by Nikulin, Kondo, and Mukai. 
First of all, any finite automorphism group $G$ of a K3 surface $S$ is the extension of a cyclic group by the subgroup of $G$ acting symplectically on $S$. In the abelian case, finite symplectic automorphism groups of K3 surfaces have first been classified by Nikulin \cite{nikulin3}. Later, Mukai \cite{mukai} showed that finite groups acting symplectically on some K3 surface coincide with those groups admitting a certain type of embedding into the Mathieu group $M_{23}$. An independent classification was later given by Xiao \cite{xiao}.

Non-symplectic automorphisms have been studied by Nikulin \cite{nikulin2}, Kondo \cite{kondononsympl}, Oguiso--Zhang \cite{oguisozhang}, Artebani--Sarti--Taki \cite{artebanisartitaki}, and others.
For non-symplectic involutions $i:S\to S$, two main results are the following:

\begin{enumerate}
\item The deformation type of a pair $(S,i)$ is determined by the invariant sublattice of the induced isometry $i^*:H^2(S,\Z)\to H^2(S,\Z)$. \cite{nikulin2}
\item For a given isometry class of an invariant sublattice $M\subset L_{K3}$, the corresponding moduli space is a Zariski-open subset of an arithmetic quotient $\Omega_{M^\perp}/\Gamma_\Mp$ of a bounded symmetric domain, and in particular a quasi-projective variety. \cite{yoshikawa}
\end{enumerate}

\smallskip

Irreducible symplectic manifolds are higher-dimensional analogues of K3 surfaces, and their automorphisms have been studied in recent years by several authors.  

Boissi\`ere--Nieper-Wißkirchen--Sarti \cite{boissiereniepersarti} and Oguiso--Schröer \cite{oguisoschroer} gave examples of non-symplectic automorphisms without fixed points, and hence of higher-dimensional analogues of Enriques surfaces.
Classification results have been obtained in particular for manifolds of \kn-type, which are $2n$-dimensional irreducible symplectic manifolds deformation equivalent to the  Hilbert scheme of $n$ points on a K3 surface.
Hoehn and Mason \cite{mason} generalized Mukai's results on finite symplectic automorphism groups to the $K3^{[2]}$-case, and
Mongardi \cite{mongardisymplectic} classified the deformation types of $K3^{[2]}$-type manifolds with symplectic prime order automorphisms. 
A classification of the invariant lattices of non-symplectic prime order automorphisms in the $K3^{[2]}$-case has been given by Boissi\`ere, Camere, and Sarti \cite{boissierecameresarti}.
Beauville \cite{beauvilleinv} started the systematic study of non-symplectic involutions of \kn-type manifolds and gave a description of their fixed loci. In the case $n=2$, Mongardi and Wandel \cite{mongardiwandel} constructed examples realizing almost every possible invariant lattice.

\smallskip

 In this paper we will in particular consider the moduli problem for irreducible symplectic manifolds of \kn-type with non-symplectic involutions.
As in the K3 case, the key tool will be the Global Torelli theorem, which has recently been generalized by Verbitsky to irreducible symplectic manifolds, see \cite{verbitskytorelli}, \cite{markmansurvey}.
We will see, however, that in general neither of the statements (i) and (ii) above remains true for \kn-type manifolds. The reason is that, unlike K3 surfaces, higher-dimensional irreducible symplectic manifolds
can possess several birational models.

Markman \cite[\S 5]{markmansurvey} showed that the different birational models of an irreducible symplectic manifold $X$ correspond to a chamber decomposition of the positive cone of $X$, which by recent results of Amerik and Verbitsky \cite{amerikverbitskyrational} is defined by hyperplanes orthogonal to a certain deformation invariant set of divisor classes $\Delta(X)\subset H^{1,1}(X,\Z)$ (the definition will be given in Subsection \ref{subsec:ktchambers}).

If $i:X\to X$ is a non-symplectic involution and $f:X\dashrightarrow\widetilde{X}$ is a birational model, then it can happen that the induced involution $\tilde{i}=f\circ i\circ f^{-1}:\widetilde{X}\dashrightarrow\widetilde{X}$ is again biregular (see Example \ref{exm:comp}).
If the chambers corresponding to $X$ and $\widetilde{X}$ are separated by a wall $D^\perp$ for some $D\in\Delta(X)$, there are two possibilities:
\begin{enumerate}[label=(\alph*)]
\item The divisor $D$ is invariant under $i$. In this case, the wall $D^\perp$ is stable under deformation of $(X,i)$ and the two pairs $(X,i)$ and $(\widetilde{X},\tilde{i})$ deform (locally) into different families (see Proposition \ref{prop:definv}).
\item The divisor $D$ is not invariant under $i$. In this case, the wall $D^\perp$ vanishes for a generic deformation of $(X,i)$, and therefore $(X,i)$ and $(\widetilde{X},\tilde{i})$ deform into the same family (Proposition \ref{prop:insep}).
\end{enumerate}
As a consequence of possibility (a), there can be more than one deformation type of involutions, even if the invariant sublattice is fixed up to parallel transport, rather than up to isometry. In Section \ref{sec:sikc} we will associate another invariant to a non-symplectic involution $i:X\to X$, the stable invariant Kähler cone $\widetilde{\calK}_X^i$. This is a cone containing the invariant Kähler cone $\calK_X^i$ and consists of classes that deform into an invariant Kähler class for the generic deformation of $(X,i)$. In Section \ref{sec:defeq} we will show that two \kn-type manifolds with non-symplectic involutions are deformation equivalent if and only if their stable invariant Kähler cones are equivalent under parallel transport (Proposition \ref{prop:definv} and Theorem \ref{thm:def}). 

Moreover, starting with an admissible sublattice $M\subset L_n$ of the abstract \kn-lattice (see Definition \ref{dfn:admissiblelattice}), we will give a lattice-theoretic description of the deformation types of involutions of type $M$ (Theorem \ref{thm:deftypes}). In particular, this will show that every admissible sublattice is isometric to an invariant sublattice of a non-symplectic involution.

In Section \ref{sec:mod} we will study moduli spaces of \kn-type manifolds with non-symplectic involutions. As a consequence of possibility (b) above, a Hausdorff (and in particular a quasi-projective) moduli space does not always exist. 
We will therefore restrict to \emph{simple} pairs $(X,i)$, that is, those satisfying $\widetilde{\calK}_X^i=\calK_X^i$. 
We show, that non-simple pairs correspond to a divisor in the local deformation space (Proposition \ref{prop:simple}) and that a quasi-projective moduli space for simple pairs exists (Theorem \ref{thm:modnat}).

%%%%%%%%%%%%%%%%
%   LATTICES   %
%%%%%%%%%%%%%%%%

\section{Lattices}

A \emph{lattice} is a finitely generated free abelian group $L$ together with a non-degenerate symmetric bilinear form $(\cdot,\cdot):L\times L\to \Z$. The rank of $L$ is denoted by $\rk(L)$.

The bilinear form defines an embedding $L\hookrightarrow L^*$ of $L$ into the \emph{dual lattice} $L^*:=\Hom(L,\Z)$ as a finite index subgroup. The \emph{discriminant group} of $L$ is the finite abelian group $A_L:=L^*/L$.
A lattice is called \emph{unimodular}, if $L=L^*$, and \emph{even}, if $(l,l)\in 2\Z$ for every $l\in L$. We denote by $U$ the even unimodular lattice of signature $(1,1)$, and by $E_8(-1)$ the even unimodular lattice of signature $(0,8)$. The rank one lattice generated by an element $l$ such that $(l,l)=k$ is denoted by $\langle k\rangle$.

For any field $\K\in\{\Q,\R,\C\}$ we consider the vector space $L_\K:=L\otimes_\Z\K$ together with the induced $\K$-valued bilinear form.  The isomorphism $L_\Q\cong L_\Q^*$ induces a $\Q$-valued bilinear form on the dual lattice $L^*\subset L_\Q^* \cong L_\Q$. For any even lattice $L$, this defines a $\Q/2\Z$-valued quadratic form on the discriminant group $A_L$.

We denote by $O(L)$ and $O(A_L)$ the isometry groups of $L$ and $A_L$, respectively. An isometry $\varphi\in O(L)$ of a lattice induces an isometry $\overline{\varphi}\in O(A_L)$ of its discriminant group. The \emph{stable isometry group} $\widetilde{O}(L)$ is the kernel of the natural homomorphism $O(L)\to O(A_L)$. Since $A_L$ is finite, $\widetilde{O}(L)\subset O(L)$ is a finite index subgroup.

\begin{lem} \label{lem:stableisometry}
Let $S\subset L$ be a sublattice and $\varphi\in\widetilde{O}(S)$. Then $\varphi$ extends to an isometry $\widetilde{\varphi}\in\widetilde{O}(L)$ such that $\widetilde{\varphi}|_{S^\perp}=\id_{S^\perp}$.
\end{lem}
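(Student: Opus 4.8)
The plan is to extend $\sigma$ to $L_\Q$ by the identity on $(S^\perp)_\Q$ and then verify that this rational extension actually preserves the lattice $L$ and acts trivially on its discriminant group. Concretely, I would set $\tilde\sigma := \sigma \oplus \id_{S^\perp}$, first as an isometry of the finite-index sublattice $S \oplus S^\perp \subseteq L$, and then extend it $\Q$-linearly to an isometry $\tilde\sigma_\Q$ of $L_\Q = S_\Q \oplus (S^\perp)_\Q$. By construction $\tilde\sigma_\Q$ restricts to the identity on $S^\perp$, so the only two points left to establish are (a) that $\tilde\sigma_\Q(L) = L$, and (b) that the resulting isometry lies in $\so(L)$.

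Both (a) and (b) would follow from a single computation, organized via the chain of inclusions $S \oplus S^\perp \subseteq L \subseteq L^* \subseteq (S \oplus S^\perp)^* = S^* \oplus (S^\perp)^*$ obtained by dualizing the finite-index inclusion $S \oplus S^\perp \subseteq L$. For $x \in L^*$ I write $x = (a,b)$ with $a \in S^*$ and $b \in (S^\perp)^*$; since $\sigma_\Q$ preserves $S^*$, we get $\tilde\sigma_\Q(x) = (\sigma_\Q(a), b)$ and hence $\tilde\sigma_\Q(x) - x = (\sigma_\Q(a) - a,\, 0)$.

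The crucial input is the hypothesis $\sigma \in \so(S)$, which says precisely that $\sigma$ acts trivially on $A_S = S^*/S$, i.e. $\sigma_\Q(a) - a \in S$ for every $a \in S^*$. Therefore $\tilde\sigma_\Q(x) - x \in S \subseteq S \oplus S^\perp \subseteq L$ for all $x \in L^*$. Applying this with $x \in L$ gives $\tilde\sigma_\Q(L) \subseteq L$, and running the same argument with $\sigma^{-1}$ (which is again stable) yields the reverse inclusion, so that $\tilde\sigma := \tilde\sigma_\Q|_L$ is a genuine isometry of $L$. As an isometry it preserves $L^*$, so the displayed identity, now read for arbitrary $x \in L^*$, shows $\tilde\sigma(x) \equiv x \pmod{L}$; that is, $\tilde\sigma$ induces the identity on $A_L = L^*/L$, whence $\tilde\sigma \in \so(L)$.

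I expect the only real subtlety — the step that genuinely uses the stability hypothesis rather than formal manipulation — to be the inclusion $\tilde\sigma_\Q(L) \subseteq L$. A general $\sigma \in O(S)$ need not extend to $L$, because the extension can fail to respect the gluing subgroup $L/(S \oplus S^\perp) \hookrightarrow A_S \oplus A_{S^\perp}$; triviality of $\sigma$ on $A_S$ is exactly what forces $\tilde\sigma$ to fix every element of this gluing subgroup and hence to preserve the overlattice $L$. One could also phrase this step through Nikulin's correspondence between overlattices of $S \oplus S^\perp$ and isotropic subgroups of its discriminant form, but the elementary computation above appears to be the most direct route.
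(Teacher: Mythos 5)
Your proof is correct. The paper itself gives no argument here but simply cites \cite[Lemma 7.1]{ghs2}, and your computation --- extending by $\sigma\oplus\id_{S^\perp}$ on the finite-index sublattice $S\oplus S^\perp\subseteq L$ and using triviality of $\sigma$ on $A_S$ to see that $\tilde\sigma_\Q(x)-x\in S\subseteq L$ for all $x\in L^*$, which gives both $\tilde\sigma(L)=L$ and triviality on $A_L$ --- is essentially the standard proof found in that reference.
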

\begin{proof}
\cite[Lemma 7.1]{ghs2}
\end{proof}

\begin{lem} \label{lem:orbits}
Let $L$ be a lattice and $k\in \Z$. There are only finitely many $O(L)$-orbits of elements $l\in L$ with $(l,l)=k$.
\end{lem}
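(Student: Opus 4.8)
The plan is to reduce the statement to primitive vectors of nonzero norm and then to control their orbits by the isometry class of the orthogonal complement together with finitely much gluing data, the whole argument resting on the finiteness of the genus.

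First I would dispose of the easy situations. If $L$ is definite, then the equation $(l,l)=k$ has only finitely many solutions in $L$, since a definite form attains each value finitely often; hence there are finitely many orbits, and for $k=0$ the only solution is $l=0$. So I may assume $L$ indefinite and $k\neq 0$, where the substance of the statement lies. Every such $l$ can be written uniquely as $l=d\,l_0$ with $l_0$ primitive and $d\geq 1$ its content; then $k=d^2(l_0,l_0)$, so $d^2\mid k$ and only finitely many $d$ occur. Since every $\varphi\in O(L)$ preserves the content, $d\,l_0$ and $d'\,l_0'$ lie in one orbit iff $d=d'$ and $l_0,l_0'$ do, so it suffices to bound the number of orbits of primitive vectors of each norm $k/d^2$.

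Next, fix a primitive $v$ with $(v,v)=m\neq 0$ and let $v^\perp\subset L$ be its orthogonal complement, so that $\langle m\rangle\oplus v^\perp$ is a finite-index sublattice of $L$ and $L$ is reconstructed from $\langle m\rangle$ and $v^\perp$ by gluing along the graph of an anti-isometry between subgroups of $A_{\langle m\rangle}$ and $A_{v^\perp}$ (Nikulin). An isometry $\varphi\in O(L)$ taking $v$ to another primitive $v'$ restricts to an isometry $v^\perp\isoarrow v'^\perp$ respecting the gluings; conversely an isometry of complements compatible with the gluing data extends to an isometry of $L$ sending $v$ to $v'$ (cf. Lemma \ref{lem:stableisometry}). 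Thus the orbit of $v$ is determined by the isometry class of $v^\perp$ and by the gluing datum, taken up to the action of $O(v^\perp)$.

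It remains to see that these data range over a finite set. The complements $v^\perp$ all share the fixed signature $\sig(L)-\sig(\langle m\rangle)$, and their discriminant forms are constrained by $A_L$, $m$, and the finitely many possible gluings; hence they lie in finitely many genera, and since a genus contains only finitely many isometry classes there are finitely many possibilities for $v^\perp$ up to isometry. As $A_{\langle m\rangle}$ and each $A_{v^\perp}$ are finite, the gluing data also form a finite set, and combining with the reduction above proves finiteness. The one genuinely deep input — and the main obstacle — is precisely the finiteness of the number of classes in a genus (equivalently, the finiteness of the class number of the arithmetic group $O(L)$), the classical theorem of Eichler and Kneser; conceptually it is an instance of the Borel--Harish-Chandra finiteness theorem applied to $O(L_\Q)/O((v^\perp)_\Q)$. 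In the lattices occurring in this paper, which contain $U^{\oplus 2}$, one can instead invoke Eichler's criterion directly (see \cite{ghs2}): the $\so(L)$-orbit of a primitive $v$ is then determined by $(v,v)$ and $v/\mathrm{div}(v)\in A_L$, which is finite since $A_L$ is finite and $\mathrm{div}(v)\mid(v,v)$, and one passes to $O(L)$ using that $\so(L)$ has finite index. For $k=0$ and $L$ indefinite the statement genuinely fails — in $U$ the vectors $ne$ form infinitely many orbits — so the case $k\neq 0$ is understood, which is all that is used in the sequel.
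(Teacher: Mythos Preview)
The paper's own proof is merely the citation \cite[Satz~30.2]{kneser}, so you are supplying an argument rather than competing with one. Your outline---reduce to primitive vectors of nonzero norm, encode the orbit of a primitive $v$ by the isometry class of $v^\perp$ together with the Nikulin gluing datum, and bound both by finiteness of classes in a genus---is correct and is essentially the standard way to unpack Kneser's result.

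Two small corrections. The extension step you attribute to Lemma~\ref{lem:stableisometry} is really an instance of Proposition~\ref{prop:embeddings}: Lemma~\ref{lem:stableisometry} only extends a stable isometry of a \emph{fixed} sublattice by the identity on its complement, whereas you need to match an isometry between two different complements $v^\perp$ and $(v')^\perp$ with the identity $\Z v\to\Z v'$. And your Eichler shortcut applies to $L_n$ (which contains $U^{\oplus 3}$, relevant for Proposition~\ref{prop:newample}), but the lemma is also invoked for $O(\Mp)$ in Lemma~\ref{lem:discrloc}, and $\Mp$ need not contain two hyperbolic planes; so the general genus argument cannot be dispensed with.

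Your observation about $k=0$ is correct and worth recording: as stated the lemma is false for isotropic vectors in an indefinite lattice (the vectors $ne$ in $U$ lie in pairwise distinct $O(U)$-orbits), so one must read the lemma with $k\neq 0$. Since the only applications in the paper are to wall divisors, which have strictly negative square, nothing downstream is affected.
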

\begin{proof}
\cite[Satz 30.2]{kneser}
\end{proof}

A sublattice $S\subset L$ is \emph{primitive}, if the quotient group $L/S$ is free. Two primitive sublattices $S\subset L$ and $S'\subset L'$ are \emph{isometric}, if there exists an isometry $\varphi:L\to L'$ with $\varphi(S)=S'$.

Let $S\subset L$ be a primitive sublattice and $K:=S^\perp\subset L$ its orthogonal complement. The sequence of inclusions
$S\oplus K\subset L\subset L^*\subset S^*\oplus K^*$
defines an inclusion $H_L:=L/(S\oplus K)\subset A_S\oplus A_K$ as an isotropic subgroup.
 The restricted projections 
$p_S:H_L\to H_S:=p_S(H_L)$
and
$p_K:H_L\to H_K:=p_K(H_L)$ are isomorphisms of groups, and the isomorphism 
\[\gamma:=p_K\circ p_S^{-1}:H_S\to H_K\] is an anti-isometry.

Now consider another primitive sublattice $S'\subset L$ with orthogonal complement $K'$ and let $\gamma':H_{S'}\to H_{K'}$ be as above.

\begin{prop} \label{prop:embeddings}
Let $\varphi:S\to S'$ and $\psi:K\to K'$ be isometries. The isometry 
\[\varphi\oplus\psi:S\oplus K\to S'\oplus K'\]
extends to an isometry of $L$ if and only if $\overline{\psi}\circ\gamma=\gamma'\circ\overline{\varphi}$.
\end{prop}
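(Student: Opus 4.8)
The plan is to use the standard theory of gluing along overlattices, exploiting the anti-isometries $\gamma$ and $\gamma'$ that encode how $S$ and $K$ (resp.\ $S'$ and $K'$) are glued inside $L$. The key observation is that $L$ is recovered from the data $(S\oplus K, H_L)$, where $H_L=L/(S\oplus K)\subset A_S\oplus A_K$ is the isotropic glue subgroup, and that $H_L$ is precisely the graph of $\gamma$ under the identification $H_S\cong H_L\cong H_K$. Thus extending $\varphi\oplus\psi$ to an isometry of $L$ amounts to checking that the induced automorphism of $A_{S\oplus K}=A_S\oplus A_K$ preserves this glue subgroup.

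First I would observe that any isometry $\varphi\oplus\psi$ of $S\oplus K$ induces an isometry $\overline{\varphi\oplus\psi}=\overline{\varphi}\oplus\overline{\psi}$ of the discriminant group $A_S\oplus A_K$, where $\overline{\varphi}:A_S\to A_{S'}$ and $\overline{\psi}:A_K\to A_{K'}$ are the maps on discriminant groups induced by $\varphi$ and $\psi$. The central elementary fact is that $\varphi\oplus\psi$ extends to an isometry $L\to L$ (i.e.\ to an isometry sending $L\subset S^*\oplus K^*$ onto $L\subset (S')^*\oplus (K')^*$) if and only if the induced map on discriminant groups carries $H_L$ onto $H_{L}'$, that is $(\overline\varphi\oplus\overline\psi)(H_L)=H_{L}'$. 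This is the standard correspondence between finite-index even overlattices of $S\oplus K$ and isotropic subgroups of the discriminant group.

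The remaining step is to translate the condition $(\overline\varphi\oplus\overline\psi)(H_L)=H_{L}'$ into the stated commutativity $\overline{\psi}\circ\gamma=\gamma'\circ\overline{\varphi}$. Here I would use that, under the isomorphisms $p_S$ and $p_K$, the subgroup $H_L$ is the graph $\{(x,\gamma(x)):x\in H_S\}\subset A_S\oplus A_K$, and likewise $H_{L}'$ is the graph of $\gamma'$. Applying $\overline\varphi\oplus\overline\psi$ to a point $(x,\gamma(x))$ of the graph of $\gamma$ gives $(\overline\varphi(x),\overline\psi(\gamma(x)))$. This lands in the graph of $\gamma'$ precisely when $\overline\psi(\gamma(x))=\gamma'(\overline\varphi(x))$ for all $x$, which is exactly the identity $\overline{\psi}\circ\gamma=\gamma'\circ\overline{\varphi}$. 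Since $\varphi$ and $\psi$ are isometries, $\overline\varphi$ and $\overline\psi$ are bijective, so the image graph has the same order as the target graph, and the inclusion of graphs is automatically an equality; hence surjectivity onto $H_{L}'$ is automatic once the commutativity holds.

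I expect the main obstacle to be the bookkeeping of identifications: one must be careful to distinguish $H_S\subset A_S$, $H_K\subset A_K$, and $H_L\subset A_S\oplus A_K$, and to verify that the induced maps $\overline\varphi,\overline\psi$ genuinely send $H_S\to H_{S'}$ and $H_K\to H_{K'}$ (not just into $A_{S'}$, $A_{K'}$). This compatibility follows because $\varphi,\psi$ arise as restrictions of a global isometry on the extension side, but checking it cleanly requires tracking the defining inclusions $S\oplus K\subset L\subset L^*\subset S^*\oplus K^*$ carefully. Once the graph description of $H_L$ is in place, the equivalence with the commutativity of the anti-isometries is a short formal computation.
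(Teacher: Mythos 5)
The paper does not prove this proposition itself but simply cites Nikulin's Corollary 1.5.2, and your argument is exactly the standard proof of that cited result: identify $H_L$ with the graph of $\gamma$ inside $A_S\oplus A_K$, use the overlattice--isotropic-subgroup correspondence to reduce extendability of $\varphi\oplus\psi$ to the condition $(\overline{\varphi}\oplus\overline{\psi})(H_L)=H_{L'}$, and translate that into $\overline{\psi}\circ\gamma=\gamma'\circ\overline{\varphi}$; this is correct, including the cardinality argument ($|H_L|=|H_{L'}|$ since $\varphi,\psi$ are isometries) that upgrades the inclusion of graphs to an equality. The only point worth making fully explicit is the one you already flag, namely that the stated commutativity implicitly includes $\overline{\varphi}(H_S)\subseteq H_{S'}$ and $\overline{\psi}(H_K)\subseteq H_{K'}$.
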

\begin{proof}
\cite[Cor. 1.5.2]{nikulin1}
\end{proof}

%%%%%%%%%%%
%   IHS   %
%%%%%%%%%%%

\section{Irreducible symplectic manifolds}

\begin{dfn}
An \emph{irreducible (holomorphic) symplectic manifold} is a complex manifold $X$, such that
\begin{enumerate} \setlength\itemsep{2.5pt}
\item $X$ is a compact Kähler manifold,
\item $X$ is simply connected, 
\item $H^0(X,\Omega_X^2)=\C\omega$, where $\omega$ is an everywhere non-degenerate holomorphic 2-form on $X$.
\end{enumerate}
\end{dfn}

The non-degeneracy of $\omega$ implies that the complex dimension of $X$ is even. In dimension 2, irreducible symplectic manifolds coincide with K3 surfaces. For a K3 surface $S$ and an integer $n\geq 2$, we denote by $S^{[n]}$ the Hilbert scheme of zero-dimensional length $n$ subschemes of $S$. Beauville \cite{beauvilleihs} showed that $S^{[n]}$ is an irreducible symplectic manifold of dimension $2n$. (For $n=2$, this was first shown by Fujiki \cite{fujikisquare}.) An irreducible symplectic manifold is called of \emph{$K3^{[n]}$-type}, if it is deformation equivalent to $S^{[n]}$ for a $K3$ surface $S$.

\begin{prop}
For an irreducible symplectic manifold $X$, the group $H^2(X,\Z)$ carries a natural integral symmetric bilinear form $(\cdot,\cdot)$ of signature $(3,b_2(X)-3)$ satisfying
\begin{enumerate} \setlength\itemsep{2.5pt}
\item $(\omega,\omega)=0,\quad(\omega,\overline{\omega})>0$,
\item $H^{1,1}(X)=(H^{2,0}(X)\oplus H^{0,2}(X))^\perp\subset H^2(X,\C)$,
\item $(x,x)>0$ for every Kähler class $x$ on $X$.
\end{enumerate}
\end{prop}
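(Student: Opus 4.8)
The plan is to construct the form explicitly, following Beauville, and then read off its properties. First I would normalize the symplectic form so that $\int_X(\omega\overline{\omega})^n=1$, where $2n=\dim_\C X$, and define a $\C$-valued quadratic form on $H^2(X,\C)$ by
\[ q(\alpha)=\frac{n}{2}\int_X\alpha^2(\omega\overline{\omega})^{n-1}+(1-n)\left(\int_X\alpha\,\omega^{n-1}\overline{\omega}^n\right)\left(\int_X\alpha\,\omega^n\overline{\omega}^{n-1}\right). \]
Properties (i)--(iii) are then pure type computations. Since $\omega$ is of type $(2,0)$ one has $\omega^{n+1}=0$, so substituting $\alpha=\omega$ annihilates every term and gives $q(\omega)=0$, while the same computation applied to $q(\omega+\overline{\omega})$ yields $(\omega,\overline{\omega})>0$; this is (i). For (ii) one checks that if $\alpha\in H^{2,0}(X)\oplus H^{0,2}(X)$ and $\beta\in H^{1,1}(X)$, then every integrand defining the pairing $(\alpha,\beta)$ has bidegree different from $(2n,2n)$ and hence vanishes, so the Hodge decomposition is orthogonal; combined with non-degeneracy of the form this gives $H^{1,1}(X)=(H^{2,0}(X)\oplus H^{0,2}(X))^\perp$. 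For (iii), a Kähler class $\kappa$ is of type $(1,1)$, so the cross terms again vanish by bidegree and $q(\kappa)=\tfrac n2\int_X\kappa^2(\omega\overline{\omega})^{n-1}$, which is strictly positive because the integrand is a positive real $(2n,2n)$-form.

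The substantive point is the integrality of the form, and here I would invoke the Fujiki relation: there is a positive constant $c_X$ with $\int_X\alpha^{2n}=c_X\,q(\alpha)^n$ for all $\alpha\in H^2(X,\R)$. The left-hand side is a homogeneous polynomial of degree $2n$ taking integer values on the lattice $\hxz$, being a sum of cup-product intersection numbers. Because this integral polynomial equals $c_X\,q^n$, the zero locus of the nondegenerate quadratic form $q$ coincides with the reduced hypersurface cut out by $\int_X\alpha^{2n}$ and is therefore defined over $\Q$; equivalently, a rational polynomial that is a perfect $n$-th power of a quadratic form has that quadratic form rational up to a scalar. Rescaling $q$ by the appropriate positive rational number then produces the unique primitive integral quadratic form proportional to it and positive on Kähler classes; this is the natural form $(\cdot,\cdot)$.

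Finally I would compute the signature from the orthogonal splitting $H^2(X,\R)=(H^{2,0}(X)\oplus H^{0,2}(X))_\R\oplus H^{1,1}(X)_\R$ provided by (ii). On the first summand, spanned over $\R$ by $\operatorname{Re}\omega$ and $\operatorname{Im}\omega$, property (i) shows the form is positive definite, contributing signature $(2,0)$. On $H^{1,1}(X)_\R$ one has the Kähler class $\kappa$ with $(\kappa,\kappa)>0$ by (iii), and the Hodge--Riemann bilinear relations show the form is negative definite on the primitive part $\kappa^\perp\cap H^{1,1}(X)_\R$, so this summand has signature $(1,h^{1,1}(X)-1)$. Adding the two and using $b_2(X)=2+h^{1,1}(X)$ gives signature $(3,b_2(X)-3)$. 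The main obstacle is the integrality step: it genuinely requires the Fujiki relation together with the algebraic fact about $n$-th powers of quadratic forms, whereas everything else reduces to bookkeeping with Hodge types and the Hodge--Riemann relations.
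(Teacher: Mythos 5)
The paper offers no argument of its own here—it simply cites Beauville \cite{beauvilleihs}—and your proposal is a correct reconstruction of exactly that route: Beauville's explicit quadratic form, the Fujiki relation to force rationality (hence, after rescaling, primitivity and integrality), and the Hodge--Riemann relations for the signature. One small imprecision worth fixing: in step (ii) it is not true that \emph{every} integrand has bidegree different from $(2n,2n)$ (for instance $\int_X\omega\,\omega^{n-1}\overline{\omega}^{\,n}=1$); rather, each \emph{product} of integrals in the polarization vanishes because at least one factor does, and likewise the positivity of $\int_X\kappa^2(\omega\overline{\omega})^{n-1}$ deserves a word about the weak positivity of $\omega^{n-1}\wedge\overline{\omega^{n-1}}$ against the strongly positive form $\kappa\wedge\kappa$.
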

\begin{proof}
\cite{beauvilleihs}
\end{proof}

This form is called the \emph{Beauville--Bogomolov form} (or sometimes Beauville--Bogomolov--Fujiki form) of $X$.

Note that property (ii) implies that
\begin{equation} \label{eq:ns}
\ns(X)=H^{1,1}(X,\Z):=H^2(X,\Z)\cap H^{1,1}(X)=H^2(X,\Z)\cap\{\omega^\perp\}.
\end{equation}

\begin{exm}
For a K3 surface $S$, the Beauville--Bogomolov form coincides with the intersection form, that is, 
\[H^2(S,\Z)\cong L_{K3}:=3U\oplus 2E_8(-1).\] 
By a result of Beauville \cite[Prop. 6]{beauvilleihs}, there exists a natural homomorphism $\varepsilon:H^2(S,\Z)\to H^2(S^{[n]},\Z)$ preserving the Beauville--Bogomolov form such that
\[H^2(S^{[n]},\Z)=\varepsilon(H^2(S,\Z))\oplus\Z e,\]
where $2e$ is the class of the irreducible divisor $E\subset S^{[n]}$ consisting of non-reduced subschemes. Moreover, one has $(e,e)=2-2n$ and therefore
\[H^2(X,\Z)\cong L_{K3}\oplus\langle2-2n\rangle=3U\oplus 2E_8(-1)\oplus\langle 2-2n\rangle=:\Ln\]
for any manifold $X$ of \kn-type.
\end{exm}

\subsection{The period map}
We now consider irreducible symplectic manifolds $X$ that are deformation equivalent to a fixed manifold $X_0$ and fix a lattice $L$ such that $H^2(X_0,\Z)\cong L$.
Let
\[\Omega_L:=\{\eta\in\Pb(L_\C):(\eta,\eta)=0,\ (\eta,\bar{\eta})>0\}\]
be the associated period domain.  

\begin{dfn} Let $X$ be an irreducible symplectic manifold as above.
\begin{enumerate}
\item
A \emph{marking} of $X$ is an isometry $\alpha:H^2(X,\Z)\to L$. 
\item
The \emph{period point} of a marked pair $(X,\alpha)$ is defined as 
\[P(X,\alpha):=\alpha(H^{2,0}(X))\in\Omega_{L}.\]
\end{enumerate}
\end{dfn}
 
Let $\pi:\calX\to\Def(X)$ be the Kuranishi family of $X=\pi^{-1}(0)$. It is a universal deformation since $H^0(X,\calT_X)=0$, and moreover unobstructed by a result of Bogomolov \cite{bogomolov}.
We assume that $\Def(X)$ is sufficiently small for the local system $R^2\pi_*\Z$ to be trivial. For any marking $\alpha_0:H^2(X,\Z)\to L$, there is a unique extension $\alpha:R^2\pi_*\Z\to L_{\Def(X)}$ where $L_{\Def(X)}$ is the constant sheaf of stalk $L$ on $\Def(X)$.

\begin{thm}[Local Torelli]
The period map $\Def(X)\to\Omega_L,\ t\mapsto P(X_t,\alpha_t)$ is an open embedding.
\end{thm}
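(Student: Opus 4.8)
The plan is to show that the differential of the period map at the origin $0\in\Def(X)$ is an isomorphism of tangent spaces, and then to invoke the holomorphic inverse function theorem: since $\Def(X)$ may be taken arbitrarily small, a map whose differential is an isomorphism at $0$ restricts to a biholomorphism onto an open neighborhood of the period point, which is precisely an open embedding.

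First I would record that source and target are smooth complex manifolds of the same dimension. The base $\Def(X)$ is smooth by Bogomolov's unobstructedness result (already cited), of dimension $h^1(X,\tx)$. Using the everywhere non-degenerate holomorphic two-form $\omega$, the contraction $v\mapsto\iota_v\omega$ defines an isomorphism of sheaves $\tx\isoarrow\Omega^1_X$, whence
\[
H^1(X,\tx)\cong H^1(X,\Omega^1_X)=H^{1,1}(X),
\]
which has dimension $b_2(X)-2$. On the other side, $\Omega_L$ is an open subset (in the analytic topology) of the smooth quadric $\{(\eta,\eta)=0\}\subset\Pb(L_\C)$, hence a complex manifold of dimension $b_2(X)-2$. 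Writing $\omega$ also for its image $\alpha(\omega)\in L_\C$, the tangent space of $\Omega_L$ at the period point $[\omega]$ is canonically identified with $\omega^\perp/\C\omega$ (note $\omega\in\omega^\perp$, since $(\omega,\omega)=0$ by property~(i)).

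The heart of the argument is the computation of $dP_0$. Following Griffiths, the variation of the Hodge filtration is governed by cup product with the Kodaira--Spencer class, and under the identification $T_0\Def(X)=H^1(X,\tx)$ the differential factors as
\[
H^1(X,\tx)\xrightarrow{\,\lrcorner\,\omega\,}H^{1,1}(X)\hookrightarrow\omega^\perp\twoheadrightarrow\omega^\perp/\C\omega=T_{[\omega]}\Omega_L,
\]
where the first map is the contraction isomorphism above, and the inclusion $H^{1,1}(X)\subset\omega^\perp$ is exactly property~(ii) of the Beauville--Bogomolov form, which gives $H^{1,1}(X)=(H^{2,0}(X)\oplus H^{0,2}(X))^\perp$ and in particular $H^{1,1}(X)\perp\omega$. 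I expect establishing this formula --- unwinding the definition of the period map and identifying the tangent space of the quadric with $\omega^\perp/\C\omega$ --- to be the main technical obstacle, although it is entirely standard.

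Granting the formula, injectivity is immediate: the kernel of the final projection restricted to $H^{1,1}(X)$ is $H^{1,1}(X)\cap\C\omega$, which vanishes because $\omega\in H^{2,0}(X)$ and the Hodge decomposition is a direct sum. Thus $dP_0$ is an injective linear map between spaces of equal dimension $b_2(X)-2$, hence an isomorphism. The holomorphic inverse function theorem then shows that the period map is a local biholomorphism at $0$, and after shrinking $\Def(X)$ it is an open embedding onto its image in $\Omega_L$.
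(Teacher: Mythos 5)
Your argument is correct and is precisely the standard proof of Beauville's Th\'eor\`eme 5, which the paper simply cites without reproducing: the differential of the period map factors as the contraction isomorphism $H^1(X,\calT_X)\isoarrow H^{1,1}(X)$ followed by the inclusion into $\omega^\perp$ and the projection onto $T_{[\omega]}\Omega_L=\omega^\perp/\C\omega$, whose restriction to $H^{1,1}(X)$ has trivial kernel by the Hodge decomposition. Your treatment of the remaining point --- passing from a local biholomorphism at $0$ to an open embedding by shrinking $\Def(X)$ --- is consistent with the paper's convention that $\Def(X)$ denotes a sufficiently small representative of the germ.
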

\begin{proof}
\cite[Thm. 5]{beauvilleihs}
\end{proof}

Let $\frakM_{L}$ be the moduli space of marked pairs. As a consequence of the Local Torelli theorem, the period map $P:\frakM_{L}\to\Omega_L$ is a local isomorphism. 

\begin{thm}[Surjectivity of the period map]
For any connected component $\frakM_{L}^0\subset\frakM_{L}$, the restriction of the period map $P_0:\frakM_{L}^0\to \Omega_L$ is surjective.
\end{thm}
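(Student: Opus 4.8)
The strategy is to show that the image $P_0(\frakM_L^0)$ is at once open and closed in $\Omega_L$, and then to invoke connectedness of the period domain. Since the Beauville--Bogomolov form on $L$ has signature $(3,b_2-3)$, its positive part has dimension three; identifying $\Omega_L$ with the manifold of oriented positive-definite real $2$-planes in $L_\R$ via $[\eta]\mapsto\langle\mathrm{Re}\,\eta,\mathrm{Im}\,\eta\rangle$, one sees that $\Omega_L$ is connected, because the oriented Grassmannian of $2$-planes in a real space whose positive part has dimension $\geq 3$ is connected (it is a smooth quadric, not a two-point set as in the K3 case). Hence any nonempty open and closed subset of $\Omega_L$ equals $\Omega_L$. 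Openness is immediate from the Local Torelli theorem: the period map $P:\frakM_L\to\Omega_L$ is a local isomorphism, hence open, and $\frakM_L^0$ is open in $\frakM_L$, so $P_0(\frakM_L^0)$ is open.

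The tool for closedness is the twistor construction. Given a marked pair $(X,\alpha)$ representing a point of $\frakM_L^0$, write $\eta=P(X,\alpha)$ and choose any Kähler class $\kappa$ on $X$. By Yau's solution of the Calabi conjecture there is a hyperkähler metric with Kähler class $\kappa$, whose complex structures $aI+bJ+cK$, $(a,b,c)\in S^2$, fit together into the twistor family $\calX\to\Pb^1$, a smooth family of irreducible symplectic manifolds all deformation equivalent to $X$. Transporting $\alpha$ over the connected base, the period points of the fibres trace out the \emph{twistor line} $T_\kappa=\Omega_L\cap\Pb(W_\C)$, where $W=\langle\alpha(\kappa),\mathrm{Re}\,\eta,\mathrm{Im}\,\eta\rangle\subset L_\R$ is the positive-definite three-space spanned by the three Kähler forms; this is a smooth rational curve through $\eta$. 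Since $\Pb^1$ is connected, all the fibres lie in the same component $\frakM_L^0$, so the entire line $T_\kappa$ is contained in $P_0(\frakM_L^0)$.

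It remains to prove that the image is closed, and this is the heart of the matter. Define $U\subset\Omega_L$ to be the set of periods reachable from a fixed $\eta_0\in P_0(\frakM_L^0)$ by a finite chain of twistor lines; by the previous paragraph $U\subseteq P_0(\frakM_L^0)$. One checks that $U$ is open: a single twistor line does not fill a neighborhood for dimension reasons, but the tangent directions to the available twistor lines, propagated along such chains, generate the full tangent space of $\Omega_L$ (a bracket-generating argument, using that $\kappa$ ranges over the open Kähler cone). The main obstacle is to show that $U$ is closed. Let $x\in\overline{U}$ and pick a realized period $y\in U$ close to $x$; since $y$ carries twistor lines and these vary in a proper family (each a compact $\Pb^1$), one arranges—exploiting that the periods $z$ with $z^\perp\cap L=0$ are dense and always admit a compatible hyperkähler structure—a twistor line through such a nearby $y$ that also passes through $x$. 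Then $x$ lies on a twistor line emanating from an image point, so $x\in P_0(\frakM_L^0)$ and indeed $x\in U$, whence $\overline{U}=U$. Granting this, $U$ is open and closed in the connected domain $\Omega_L$, so $U=\Omega_L$ and therefore $P_0(\frakM_L^0)=\Omega_L$.
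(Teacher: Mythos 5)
The paper does not prove this statement; it cites Huybrechts \cite[Thm.~8.1]{huybrechtsbasic}, and your strategy (openness from Local Torelli, twistor lines, connectedness of $\Omega_L$) is indeed the strategy of that reference. The openness of the image and the connectedness of $\Omega_L$ for signature $(3,b_2-3)$ are fine, as is the construction of the twistor line $T_\kappa$ associated to a Kähler class. But the step you yourself flag as ``the heart of the matter'' is not actually proved, and as written it would fail. A twistor line is $\Omega_L\cap\Pb(W_\C)$ for a positive-definite $3$-plane $W$, so a twistor line passing through both $x$ and a nearby point $y$ exists only if the positive $2$-planes $P_x$ and $P_y$ span a positive-definite \emph{three}-dimensional subspace. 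Since the positive part of $L_\R$ is only $3$-dimensional, two nearby points of $\Omega_L$ generically satisfy $\dim(P_x+P_y)=4$, and then no twistor line through both exists. So ``pick a realized period $y\in U$ close to $x$'' does not produce the required line; $y$ must be chosen on a specific candidate line through $x$, and one must then argue that this line is actually realized by a hyperkähler metric on $X_y$ --- which requires knowing that the $3$-plane $W$ contains a Kähler class of $X_y$. This is exactly where the density of periods with $\eta^\perp\cap L=0$ enters in Huybrechts' argument: for such a period the Néron--Severi lattice is trivial, hence $\calK_{X}=\calC_X$ and \emph{every} positive $3$-plane through $P_\eta$ is realized by a twistor family. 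You mention this fact in passing but do not use it to close the argument; the chain of implications ``generic twistor lines through any point meet $U$, and generic twistor lines starting in the image stay in the image'' is the actual content and is missing.

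The openness of $U$ has the same problem: Local Torelli shows a neighborhood of $u\in U$ lies in the image $P_0(\frakM_L^0)$, not in the a priori smaller set $U$ of twistor-reachable points, and your ``bracket-generating argument'' is only asserted. What is needed is the purely lattice-theoretic lemma that any two (sufficiently close, hence by connectedness any two) points of $\Omega_L$ are joined by a chain of \emph{generic} twistor lines, i.e.\ lines $T_W$ with $W^\perp\cap L=0$, combined with the Kähler-cone statement above to propagate membership in the image along such a chain. With those two lemmas the proof closes without any separate closedness argument. As it stands, your write-up reproduces the architecture of the cited proof but leaves its two essential lemmas unestablished.
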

\begin{proof}
\cite[Thm. 8.1]{huybrechtsbasic}
\end{proof}

\begin{dfn} Let $X_1,X_2$ be irreducible symplectic manifolds.
\begin{enumerate}
\item
An isometry $g:H^2(X_1,\Z)\to H^2(X_2,\Z)$ is called a \emph{parallel transport operator}, if there exists a smooth family $\pi:\calX\to T$, two base points $t_1,t_2\in T$ with $\pi^{-1}(t_i)=X_i$ and a continuous path $\gamma:[0,1]\to T$ with $\gamma(0)=t_1,\ \gamma(1)=t_2$, such that the parallel transport in $R^2\pi_*\Z$ along $\gamma$ induces $g$. 
\item  A parallel transport operator $H^2(X,\Z)\to H^2(X,\Z)$ is called a \emph{monodromy operator} of $X$.
\end{enumerate}
\end{dfn}

The composition of monodromy operators is again a monodromy operator \cite[Footnote 3]{markmansurvey}. We denote by $\mon^2(X)\subset O(H^2(X,\Z))$ the group of monodromy operators of $X$, and by $\monh(X)\subset\mon^2(X)$ the group of monodromy operators which are Hodge isometries.

\smallskip

For any connected component $\frakM_{L}^0$ of the moduli space of marked irreducible symplectic manifolds, the subgroup 
\[\mon^2(\frakM_{L}^0):=\alpha\circ\mon^2(X)\circ\alpha^{-1}\subset O(L)\]
is independent of the choice of $(X,\alpha)\in\frakM_{L}^0$. It is the subgroup of $O(L)$ fixing $\frakM_{L}^0$ with respect to the action given by $\sigma(X,\alpha)=(X,\sigma\circ\alpha),\ \sigma\in O(L)$.

Moreover, for any manifold $X$ of \kn-type, $\mon^2(X)\subset O(H^2(X,\Z))$ is a normal subgroup by \cite[Thm. 1.2]{markmanintegral}. Thus, in this case the group $\mon^2(\frakM_{L_n}^0)$ does not depend on the choice of $\frakM_{L_n}^0$, and we denote this group by $\mon^2(L_n)\subset O(L_n)$.

The following lattice-theoretic description of monodromy operators was given by Markman. Let $O^+(L_n)\subset O(L_n)$ be the index two subgroup of isometries of real spinor norm 1 (for the definition of the spinor norm, we refer to \cite[\S 4]{markmansurvey} or \cite[\S 1]{ghs3}).

\begin{lem} \label{lem:monodromy}
The group $\mon^2(L_n)$ is the inverse image of $\{-1,1\}$ with respect to the natural homomorphism $O^+(L_n)\to O(L_n^*/L_n)$. In particular, one has $\mon^2(L_n)=O^+(L_n)$ if $n=2$ or if $n-1$ is a prime power.
\end{lem}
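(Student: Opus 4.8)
The first assertion is Markman's monodromy theorem, which I would not attempt to reprove here: it rests on his analysis of the monodromy representation on the whole cohomology ring and I would cite it directly (\cite{markmansurvey}; see also \cite[Thm.~1.2]{markmanintegral}). The plan is to extract the two facts it supplies and then deduce the ``in particular'' clause by an elementary lattice computation. The first fact is the inclusion $\mon^2(\Ln)\subset O^+(\Ln)$: a monodromy operator is induced by parallel transport in a family and therefore preserves the orientation of the positive-definite three-space of $H^2(X,\R)$, which is exactly the condition of real spinor norm $1$. The second, and genuinely deep, fact is that under the natural homomorphism $O^+(\Ln)\to O(A_{\Ln})$ the image of any monodromy operator lies in $\{-1,1\}$, and conversely that every isometry in $O^+(\Ln)$ whose image lies in $\{-1,1\}$ is a monodromy operator; this is precisely the equality asserted.

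Granting this, the ``in particular'' clause reduces to the statement that $O(A_{\Ln})=\{-1,1\}$ whenever $n=2$ or $n-1$ is a prime power, for then the preimage of $\{-1,1\}$ is all of $O^+(\Ln)$. I would begin with the discriminant form. Since $3U\oplus2E_8(-1)$ is unimodular, the decomposition $\Ln=3U\oplus2E_8(-1)\oplus\langle2-2n\rangle$ shows that $A_{\Ln}$ is isometric to the discriminant form of $\langle2-2n\rangle$; thus $A_{\Ln}\cong\Z/(2n-2)\Z$ is cyclic, generated by a class $\bar g$ with
\[q(\bar g)=-\tfrac{1}{2n-2}\in\Q/2\Z.\]
Every isometry of $A_{\Ln}$ is multiplication by a unit $u\in(\Z/(2n-2)\Z)^*$, and preservation of $q$ is equivalent to the congruence $u^2\equiv1\pmod{2(2n-2)}$, a condition that is well defined on $u$ modulo $2n-2$ because $2n-2$ is even.

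It then remains to count the solutions. For $n=2$ the group $A_{\Ln}$ has order two and $O(A_{\Ln})$ is trivial, so there is nothing to check. For $n-1=p^k$ with $p$ odd I would use the Chinese Remainder Theorem: in $(\Z/p^k\Z)^*$, which is cyclic, the only square roots of unity are $\pm1$, and combining this with the factor of $2$ forces $u\equiv\pm1\pmod{2n-2}$. For $p=2$, writing $n-1=2^k$, the square roots of unity modulo $2^{k+2}$ are $\pm1$ and $2^{k+1}\pm1$, and all four reduce to $\pm1$ modulo $2n-2=2^{k+1}$; hence $O(A_{\Ln})=\{-1,1\}$ in every prime-power case. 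The one genuinely hard ingredient is the cited statement of Markman; the part I actually carry out is the elementary but slightly delicate arithmetic of square roots of unity, where the prime $2$ has to be treated on its own.
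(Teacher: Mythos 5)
Your proposal matches the paper, which proves this lemma purely by citation to Markman's survey (Lemma 9.2 there); you cite the same source for the substantive first assertion. Your additional elementary verification that $O(A_{L_n})=\{\pm1\}$ when $n=2$ or $n-1$ is a prime power --- via $A_{L_n}\cong\Z/(2n-2)\Z$ with $q(\bar g)=-1/(2n-2)$ and the count of square roots of unity modulo $2(2n-2)$, treating $p=2$ separately --- is correct and fills in a step the paper leaves entirely to the reference.
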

\begin{proof}
\cite[Lemma 9.2]{markmansurvey}
\end{proof}

Our main tool will be the Global Torelli theorem which was proved by Verbitsky \cite{verbitskytorelli}. We will only use the following Hodge-theoretic version which is due to Markman.

\begin{thm}[Global Torelli theorem] \label{thm:gt}
Let $X,Y$ be irreducible symplectic manifolds and $g:H^2(X,\Z)\to H^2(Y,\Z)$ a Hodge isometry which is a parallel transport operator. If $g$ maps a Kähler class to a Kähler class, then there exists a biholomorphic map $f:Y\to X$ with  $f^*=g$.
\end{thm}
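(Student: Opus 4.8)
The plan is to recast the statement in terms of the moduli space of marked pairs, reduce the existence of a \emph{bimeromorphism} to Verbitsky's Global Torelli theorem, and then use the Kähler hypothesis to promote that bimeromorphism to a biholomorphism. First I would fix a marking $\alpha:H^2(Y,\Z)\to L$ (where $L$ is the abstract lattice isometric to $H^2(Y,\Z)$) and set $\beta:=\alpha\circ g:H^2(X,\Z)\to L$, which is a marking of $X$ since $g$ is an isometry. Because $g$ is a parallel transport operator, the marked pairs $(X,\beta)$ and $(Y,\alpha)$ lie in a single connected component $\frakM_L^0$. As $g$ is a Hodge isometry it carries $H^{2,0}(X)$ onto $H^{2,0}(Y)$, so the two period points coincide:
\[
P(X,\beta)=\beta\bigl(H^{2,0}(X)\bigr)=\alpha\bigl(g(H^{2,0}(X))\bigr)=\alpha\bigl(H^{2,0}(Y)\bigr)=P(Y,\alpha).
\]

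The core input is Verbitsky's Global Torelli theorem: after passing to the Hausdorff reduction of $\frakM_L^0$, the period map becomes injective, so two points of $\frakM_L^0$ have equal period if and only if they are inseparable. By Huybrechts' analysis of inseparable points, such pairs correspond to bimeromorphic manifolds, and the bimeromorphism can be chosen compatibly with the markings. Applied to $(X,\beta)$ and $(Y,\alpha)$, this yields a bimeromorphic map $f:Y\dashrightarrow X$ whose induced Hodge isometry satisfies $f^*=g$, the marking-compatible choice pinning down this identity on the nose rather than up to a monodromy operator.

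It remains to show that $f$ is in fact biregular, and this is exactly where the Kähler hypothesis enters. Let $\kappa\in\kx$ be the Kähler class on $X$ with $g(\kappa)$ Kähler on $Y$, so $g(\kappa)\in\calK_Y$. Then $f^*\kappa=g(\kappa)$ lies both in $\calK_Y$ and in $f^*(\kx)$. Since a bimeromorphism of irreducible symplectic manifolds is an isomorphism in codimension one, $f^*$ maps the positive cone $\cx$ isometrically onto $\calC_Y$, and $f^*(\kx)$ is one chamber of the wall-and-chamber decomposition of $\calC_Y$ (the one coming from the birational model $X$), while $\calK_Y$ is another. Distinct chambers of this decomposition have disjoint interiors, so the shared interior point $g(\kappa)$ forces $f^*(\kx)=\calK_Y$. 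A bimeromorphism identifying the Kähler cones is biregular, whence $f:Y\to X$ is the desired biholomorphism with $f^*=g$.

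The genuinely hard ingredient, which I would have to invoke as a black box, is Verbitsky's Global Torelli theorem itself: the injectivity of the period map on the Hausdorff reduction of $\frakM_L^0$, together with the identification of its fibres with classes of bimeromorphic (inseparable) marked pairs. This rests on the hyperkähler twistor geometry of the Teichmüller space and the structure of the monodromy action, and is far deeper than the rest of the argument. Everything else is formal: the passage to markings and periods in the first step, and the chamber-theoretic upgrade in the last step, which relies only on the standard description of the Kähler cone as a chamber in the positive cone cut out by the walls orthogonal to the distinguished divisor classes.
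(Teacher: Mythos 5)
The paper offers no proof of this statement: it is quoted directly from Markman's survey (\cite[Thm.~1.3]{markmansurvey}), where it is derived from Verbitsky's Global Torelli theorem. Your outline is essentially a reconstruction of that derivation, and you correctly isolate Verbitsky's theorem (injectivity of the period map on the Hausdorff reduction of a connected component of $\frakM_L^0$) as the irreducible black box. Two steps in your sketch are looser than the argument actually given in the reference. First, Huybrechts's theorem on inseparable marked pairs does not directly hand you a bimeromorphism $f$ with $f^*=g$; it produces an effective cycle $\Gamma=Z+\sum_j Y_j$ in $X\times Y$ with $Z$ the graph of a bimeromorphism, the $Y_j$ supported over proper analytic subsets, and $\Gamma_*=g$. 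The correction terms $[Y_j]_*$ need not vanish on $H^2$ a priori, so ``$f^*=g$ on the nose'' is precisely the point that requires work rather than a free choice of marking-compatibility; in Markman's treatment the Kähler hypothesis is already used at this stage to show $\Gamma=Z$ and that $Z$ is the graph of an isomorphism, rather than first producing a bimeromorphism and then upgrading it chamber-theoretically. Second, your closing step --- that a bimeromorphic map of irreducible symplectic manifolds carrying a Kähler class to a Kähler class (equivalently, identifying $f^*(\kx)$ with $\calK_Y$) is biregular --- is true but is itself a citable theorem of Fujiki--Huybrechts rather than a formality. With these two points filled in from the literature, your argument is the standard and correct one.
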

\begin{proof}
\cite[Thm. 1.3]{markmansurvey}
\end{proof}

\subsection{Kähler-type chambers} \label{subsec:ktchambers}

In this subsection, we recall the description of the Kähler-type chambers given by Amerik--Verbitsky \cite{amerikverbitskyrational}. A similar result for the Kähler cone was shown by Mongardi \cite[Thm. 1.3]{mongardicone}.

\begin{dfn} Let $X$ be an irreducible symplectic manifold.
\begin{enumerate}
\item The \emph{positive cone} $\cx$ of $X$ is the connected component of
\[\widetilde{\calC}_X:=\{x\in H^{1,1}(X,\R):(x,x)>0\}\]
that contains the Kähler cone $\kx$ of $X$.
\item A \emph{Kähler-type chamber} of $X$ is a subset of $\cx$ of the form $g(f^*\calK_{\widetilde{X}})$ for some $g\in\monh(X)$ and a bimeromorphic map $f:X\dashrightarrow\widetilde{X}$ to an irreducible symplectic manifold $\widetilde{X}$.
\end{enumerate}
\end{dfn}

\begin{dfn}[{\cite[Def. 1.13]{amerikverbitskyrational}}]
A rational class $z\in H^{1,1}(X,\Q)$ with $(z,z)<0$ is called \emph{monodromy birationally minimal}, if there exists a bimeromorphic map $f:X\dashrightarrow\widetilde{X}$ and a monodromy operator $g\in\mon^2(X)$, such that the hyperplane $g(z)^\perp$ contains a face of $f^*\calK_{\widetilde{X}}$.
\end{dfn}

\begin{thm}[Amerik--Verbitsky] \label{thm:mbmdef}
Let $z\in H^{1,1}(X,\Z)$ be a monodromy biratio\-nal\-ly minimal class on $X$, and $(X',z')$ a deformation of $(X,z)$, such that $z'$ is of type $(1,1)$. Then $z'$ is monodromy birationally minimal.
\end{thm}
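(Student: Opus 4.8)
The plan is to reduce the statement to the local constancy of a single invariant along a path in the marked moduli space, and then to analyse that invariant using the Global Torelli theorem together with the deformation theory of rational curves.

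First I would pass to markings. After choosing compatible markings $\alpha,\alpha'$ with values in $L=\Ln$, a deformation of $(X,z)$ becomes a continuous path $\gamma$ in a fixed connected component $\ml$ along which the vector $z\in L$ is held constant, with endpoints $(X,\alpha)$ and $(X',\alpha')$; the hypothesis that $z$ (resp.\ $z'$) be of type $(1,1)$ says exactly that the period point lies in $\Omega_{z^\perp}:=\Omega_{\Ln}\cap z^\perp$. Thus $\gamma$ may be taken inside the Hodge locus $H_z:=\ml\cap P^{-1}(\Omega_{z^\perp})$, which by Local Torelli is locally biholomorphic to $\Omega_{z^\perp}$. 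Replacing $z$ by a primitive generator of $\Q z\cap L$ (MBM-ness being a property of the ray), it then suffices to show that MBM-ness of the fixed vector $z$ is locally constant on $H_z$: since $\gamma$ joins $X$ and $X'$ inside $H_z$, they lie in one connected component, and local constancy forces $z'$ to be monodromy birationally minimal as soon as $z$ is.

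I would establish local constancy by showing that the MBM locus is both open and closed in $H_z$. For closedness I would use a semicontinuity argument: a wall $g(z)^\perp$ (with $g\in\mon^2$) supporting a face of some $f^*\calK_{\widetilde Y}$ on members approaching a limit cannot simply vanish in the limit, since the effective curve classes realising such a face persist under specialisation by compactness of the relevant spaces of curves, and since the ambient monodromy group $\mon^2(\Ln)$ is constant along $\gamma$ (Lemma \ref{lem:monodromy}), so that the very operator $g$ and the rational hyperplane $g(z)^\perp$ remain available; the Global Torelli theorem then identifies the Kähler-type chamber structure with Hodge-theoretic data varying continuously along $H_z$. For openness I would argue geometrically: if $z$ is MBM on a member $Y_0$, then after applying a monodromy operator the hyperplane $z^\perp$ supports a face of the Kähler cone $\calK_{\widetilde{Y_0}}$ of a birational model $\widetilde{Y_0}$, and such a face corresponds to an extremal contraction, hence to a covering family of rational curves. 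As $Y_0$ is deformed inside $H_z$ the class $z$ stays of type $(1,1)$, these rational curves deform to nearby fibres in the same homology class, and the corresponding wall reappears, so $z$ remains MBM on a neighbourhood in $H_z$.

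The main obstacle is precisely this openness step: it is not a formal consequence of Torelli, but requires the deformation theory of minimal rational curves on manifolds of \kn-type, controlling that the extremal contraction survives a generic deformation keeping $z$ of type $(1,1)$. This is the genuine content of the theorem and the technical heart of Amerik--Verbitsky \cite{amerikverbitskyrational}. By contrast, the reductions of the first paragraph and the closedness half are comparatively formal, flowing from Local and Global Torelli and from the constancy of $\mon^2(\Ln)$; the essential difficulty is concentrated entirely in showing that an MBM wall, unlike an accidental wall produced by a larger Néron--Severi lattice at a special member, is carried by curves that genuinely propagate over the whole Hodge locus.
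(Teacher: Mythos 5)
The paper offers no proof of this statement: it is quoted from Amerik--Verbitsky and the ``proof'' is the citation of [AV14a, Thm.~2.16]. So the real question is whether your sketch could stand in for that citation, and it cannot. The reductions in your first paragraph (markings, the Hodge locus $H_z$, primitivity, reduction to local constancy of MBM-ness on a connected set) are correct and essentially formal, and the open-and-closed strategy has the right shape. But both halves of that strategy, not only the openness half you flag, currently rest on assertions that are themselves the content of the theorem.

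For openness, ``such a face corresponds to an extremal contraction, hence to a covering family of rational curves\dots these rational curves deform to nearby fibres'' is precisely what has to be proved. The required input is the Ran/Amerik--Verbitsky deformation theory: on a $2n$-dimensional holomorphically symplectic manifold the space of deformations of a minimal rational curve has dimension at least $2n-2$, so the curve sweeps out a divisorial locus and persists under any deformation keeping its class of type $(1,1)$; without this dimension bound the contraction can a priori disappear at the generic nearby fibre, which is exactly the phenomenon the theorem rules out. For closedness, ``persist under specialisation by compactness of the relevant spaces of curves'' is not yet an argument: one needs a uniform bound on the degree of the curves against a fixed K\"ahler class (this is where the bound $|(z,z)|<C_n$ of Bayer--Hassett--Tschinkel enters), and one must exclude that the limit cycle breaks into components whose classes are no longer proportional to $z$, or that the limiting hyperplane fails to support a face of the K\"ahler cone of any birational model. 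Finally, the Hodge locus $H_z$ contains non-projective members, for which the identification of walls with curve classes requires separate twistor-line arguments that your sketch does not address. As written, the proposal is an accurate map of where the difficulty lies rather than a proof that crosses it; in the context of this paper the honest ``proof'' is simply the reference to Amerik--Verbitsky.
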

\begin{proof}
\cite[Thm. 2.16]{amerikverbitskycone}
\end{proof}

\begin{thm}[Amerik--Verbitsky] \label{thm:ktchambers}
The Kähler-type chambers of $X$ are the connected components of
\[\cx\setminus\bigcup_{z}z^\perp,\]
where the union is taken over all monodromy birationally minimal classes on $X$. 
\end{thm}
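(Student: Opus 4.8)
The plan is to identify two decompositions of the positive cone $\cx$: on one side the collection of Kähler-type chambers $g(f^*\calK_{\widetilde X})$, and on the other the wall-and-chamber decomposition $\cx\setminus\bigcup_z z^\perp$ cut out by the monodromy birationally minimal (MBM) classes. I would first check that the latter is an honest locally finite chamber decomposition. Since the set of MBM classes is invariant under $\monh(X)$ and, by Theorem \ref{thm:mbmdef}, under deformation, a boundedness statement for the Beauville--Bogomolov square of MBM classes (as established in \cite{amerikverbitskycone}) shows that only finitely many walls $z^\perp$ meet any compact subset of the interior of $\cx$. Hence the connected components of $\cx\setminus\bigcup_z z^\perp$ are open convex subcones, and it suffices to match them with the Kähler-type chambers.

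The central step is to show that the Kähler cone $\kx$ is itself one such component. Two things must be verified: that $\kx$ meets no wall, and that each of its faces lying in the interior of $\cx$ is contained in a wall. The latter is essentially the definition of an MBM class, since with $f=\id_X$ and $g=\id$ any codimension-one face of $\kx=\calK_X$ in the interior of $\cx$ lies on some $z^\perp$ with $z$ rational and $(z,z)<0$, which makes $z$ MBM. For the disjointness, suppose a Kähler class $\kappa$ satisfied $(\kappa,z)=0$ for some MBM class $z$. Deforming $(X,z)$ along a path inside the period subdomain $\Omega_{z^\perp}$ to a very general point keeps $z$ of type $(1,1)$ and, by Theorem \ref{thm:mbmdef}, MBM; but at a very general such point $\ns$ is spanned by $z$ alone, so $z^\perp$ is an actual boundary wall of the Kähler cone there and no Kähler class can be orthogonal to $z$. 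Tracking $\kappa$ along the deformation, using that Kählerness is an open condition, yields a contradiction. This identifies $\kx$ with a single connected component.

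Next I would propagate this to all Kähler-type chambers. A bimeromorphic map $f:X\dashrightarrow\widetilde X$ induces a Hodge isometry $f^*$ that is a parallel transport operator, and both $f^*$ and any $g\in\monh(X)$ permute the MBM classes (monodromy invariance for $g$; for $f^*$ because $f$ is an isomorphism in codimension one, so it matches the MBM classes of $\widetilde X$ with those of $X$). Thus $g$ and $f^*$ carry the wall decomposition of $\calC_{\widetilde X}$ isometrically onto that of $\cx$. Applying the previous step to $\widetilde X$, the cone $\calK_{\widetilde X}$ is a connected component of $\calC_{\widetilde X}\setminus\bigcup_z z^\perp$, so $g(f^*\calK_{\widetilde X})$ is a connected component of $\cx\setminus\bigcup_z z^\perp$. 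Hence every Kähler-type chamber is one of the connected components.

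For the reverse inclusion I would show that every connected component $C$ is of the form $g(f^*\calK_{\widetilde X})$. The walls generate a reflection group acting on the set of chambers, and a hyperbolic-geometry argument gives transitivity of this action on chambers once one knows that a chamber sharing a single wall $z^\perp$ with $\kx$ is again a Kähler-type chamber. This is where the Global Torelli theorem is used: the reflection across a monodromy wall is realised by an element of $\mon^2(X)$ (via Markman's description \cite{markmansurvey}), while across a birational wall the adjacent chamber is realised as $f^*\calK_{\widetilde X}$ for a model $\widetilde X$ produced by Global Torelli, namely a Hodge isometry that is a parallel transport operator sending a Kähler class across the wall to a Kähler class of $\widetilde X$. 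Iterating reaches every chamber from $\kx$. The main obstacle throughout is the disjointness claim of the second paragraph, that no MBM wall crosses the interior of the Kähler cone; everything else is bookkeeping with the $\monh(X)$- and birational-pullback actions, whereas this step genuinely requires combining the deformation invariance of MBM classes with the behaviour of the Kähler cone under deformation and the Global Torelli theorem to reduce to very general periods of minimal Néron--Severi rank.
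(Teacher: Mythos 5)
The paper does not prove this statement at all: it is quoted verbatim from Amerik--Verbitsky and the ``proof'' is the citation [AV14b, Thm.~6.2]. So your attempt has to be judged against the actual argument in the literature, and while your outline (local finiteness of the walls, identification of $\kx$ with one component, propagation by $\monh(X)$ and birational pullback, then a wall-crossing argument for surjectivity) is the right skeleton, the two steps you yourself single out as the crux are where the argument as written breaks down. In the disjointness step, ``tracking $\kappa$ along the deformation, using that Kählerness is an open condition'' does not make sense: a Kähler class $\kappa$ with $(\kappa,z)=0$ is a real $(1,1)$-class on $X$ only, and along a path in $\Omega_{z^\perp}$ toward a very general period only $z$ is forced to remain of type $(1,1)$; the class $\kappa$ does not define a flat section of $(1,1)$-classes, so there is nothing to which openness of Kählerness can be applied. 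Moreover, at the very general point of $\Omega_{z^\perp}$ the MBM condition only guarantees that $g(z)^\perp$ supports a face of $f^*\calK_{\widetilde X}$ for \emph{some} monodromy operator $g$ and \emph{some} birational model $\widetilde X$; to conclude that $z^\perp$ is a face of $\calK_{X_t}$ itself you must rule out the alternative that $\calK_{X_t}=\calC_{X_t}$ while the chamber with a face on $z^\perp$ belongs to a different model, which requires Huybrechts' results on birational IHS manifolds and the rational-curve description of the Kähler cone. This is precisely the content of Amerik--Verbitsky's theorem, not a reduction of it.

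The surjectivity step has a second genuine gap: the reflections in the MBM walls do not in general generate a group of integral isometries, let alone monodromy operators. Already for $n=2$, Proposition \ref{prop:wallk2} includes classes $D$ with $(D,D)=-10$ and $(D,H^2(X,\Z))=2\Z$, for which $x\mapsto x+\tfrac{2(x,D)}{(D,D)}D$ is not integral; so ``the walls generate a reflection group acting transitively on the chambers'' is not available. The correct covering statement (Markman) is that $\cx$ is the union of the translates of the closure of the movable cone under the Weyl group generated by reflections in \emph{stably prime exceptional} divisors only --- those reflections are monodromy operators --- while inside the movable cone the chambers are the cones $f^*\calK_{\widetilde X}$, produced by the surjectivity of the period map together with Huybrechts' theorem that non-separated points of the moduli space are birational, and Global Torelli. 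Your sketch gestures at both mechanisms but conflates them into a single reflection-group argument, which would fail for the non-reflective walls.
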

\begin{proof}
\cite[Thm. 6.2]{amerikverbitskyrational}
\end{proof}

Let $\Delta(X)\subset H^{1,1}(X,\Z)$ be the subset of all primitive integral classes which are monodromy birationally minimal. We call such classes \emph{wall divisors} (as in \cite{mongardicone}).
The set $\Delta(X)$ of wall divisors on manifolds of \kn-type has been explicitly determined for $n=2,3,4$ by Mongardi \cite{mongardicone}. We will only use the explicit description for $n=2$:

\begin{prop} \label{prop:wallk2}
A class $D\in H^{1,1}(X,\Z)$ is a wall divisor if and only if
\begin{enumerate}
\item $(D,D)=-2$, or
\item $(D,D)=-10$ and $(D,H^2(X,\Z))=2\Z$.
\end{enumerate}
\end{prop}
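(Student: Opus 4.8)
The plan is to determine the wall divisors on a $K3^{[2]}$-type manifold $X$ by combining the deformation-invariance of wall divisors (Theorem~\ref{thm:mbmdef}) with an explicit analysis on a well-chosen example, typically the Hilbert scheme $S^{[2]}$ of a K3 surface $S$, where the geometry is accessible. Since $\Delta(X)$ is a deformation invariant set of classes (it is preserved by parallel transport and by specialization to $(1,1)$-type), and since the monodromy group $\mon^2(L_2)=O^+(L_2)$ acts transitively enough on primitive classes of fixed square and fixed divisibility, it suffices to show that the square and the divisibility of a class $D$ determine whether $D\in\Delta(X)$, and then to identify exactly which pairs (square, divisibility) occur.

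First I would use Lemma~\ref{lem:orbits} together with the structure of $L_2=3U\oplus 2E_8(-1)\oplus\langle-2\rangle$ to classify the $\mon^2(L_2)$-orbits of primitive classes $D$ with $(D,D)<0$. Because $\mon^2(L_2)=O^+(L_2)$ in the $n=2$ case (Lemma~\ref{lem:monodromy}), and by the Eichler-type transitivity results for even lattices containing several copies of $U$, the orbit of a primitive class $D$ is governed by the two invariants $(D,D)$ and the element $D/\!\operatorname{div}(D)\in A_{L_2}$ (equivalently the divisibility, since $A_{L_2}\cong\Z/2\Z$ here). Thus a class is a wall divisor if and only if its orbit is, and the problem reduces to deciding, orbit by orbit, which negative squares can bound a Kähler-type chamber. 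I would then invoke the known classification of prime-divisorial and exceptional classes on $S^{[2]}$: walls of the movable/Kähler cone for $K3^{[2]}$-type manifolds are controlled by Markman's and Hassett--Tschinkel's description, and the relevant extremal classes have $(D,D)=-2$ (divisorial/$(-2)$-walls coming from $(-2)$-curves and the exceptional divisor) or $(D,D)=-10$ with $\operatorname{div}(D)=2$ (the Hassett--Tschinkel flopping walls associated to Lagrangian $\Pb^2$'s).

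The main steps in order are: (1) reduce to orbits via Lemma~\ref{lem:orbits} and the transitivity statement, so that $\Delta(X)$ depends only on $\big((D,D),\operatorname{div}(D)\big)$; (2) exhibit, for each claimed pair $(-2,\ast)$ and $(-10,2)$, an explicit irreducible symplectic manifold and a bimeromorphic model whose Kähler cone has a face on $D^\perp$, proving these classes lie in $\Delta(X)$; and (3) conversely show that no other negative square $D$ can be monodromy birationally minimal, by verifying that for every other orbit the hyperplane $D^\perp$ meets no face of any $f^*\calK_{\widetilde X}$. Step (3) is the converse direction and relies on bounding the possible squares of extremal rays: one shows that a wall divisor $D$ must pair with the extremal ray in a way forcing $(D,D)\in\{-2,-10\}$ and, in the $-10$ case, forcing divisibility $2$.

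\textbf{The hard part} will be step (3), the exhaustiveness of the list: ruling out all other negative squares and divisibilities as wall divisors. This is where one cannot argue purely formally from Theorem~\ref{thm:ktchambers}, since that theorem characterizes the chambers \emph{given} the set of monodromy birationally minimal classes but does not by itself compute which classes these are. The genuine input is the geometric classification of the extremal contractions and flops on $K3^{[2]}$-type manifolds (the determination of all birational models and the numerical types of their wall divisors), which pins down the finite list of admissible $\big((D,D),\operatorname{div}(D)\big)$. I would therefore treat this as a citation to Mongardi's explicit computation \cite{mongardicone} rather than reprove it, using the deformation-invariance and orbit reduction above only to justify that the square and divisibility suffice to detect membership in $\Delta(X)$ for an arbitrary $K3^{[2]}$-type $X$.
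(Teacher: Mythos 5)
Your proposal is essentially the paper's proof: the paper disposes of the statement by citation, attributing the exhaustiveness direction (every wall divisor has square $-2$, or square $-10$ and divisibility $2$) to Hassett--Tschinkel and the realization direction (every such class is a wall divisor) to Markman and Mongardi, and your orbit-reduction scaffolding via Lemma~\ref{lem:monodromy} and Eichler's criterion is the standard justification for why square and divisibility are the only invariants that matter. The one small correction is attribution: the ``hard part'' you defer to Mongardi (ruling out all other squares) is in fact the Hassett--Tschinkel contribution, while Mongardi's and Markman's results supply the converse, namely that the classes on your list genuinely bound Kähler-type chambers.
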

\begin{proof}
Hassett--Tschinkel \cite[Thm. 23]{hassetttschinkel} showed that every wall divisor is of this form, and Markman \cite[Thm. 1.11]{markmanprime} and Mongardi \cite[Prop. 2.12]{mongardicone} showed that every such class is a wall divisor.
\end{proof}

%%%%%%%%%%%%%%%%%%%%%%
%   NONSYMPL INVOL   %
%%%%%%%%%%%%%%%%%%%%%%

\section{Non-symplectic involutions}

\begin{dfn}
A \emph{non-symplectic involution} of an irreducible symplectic manifold $X$ is a biholomorphic involution $i:X\to X$ with $i^*\omega=-\omega$.
\end{dfn}

Let $i:X\to X$ be a non-symplectic involution and $i^*:H^2(X,\Z)\to H^2(X,\Z)$ be the induced isometry. 
For manifolds of \kn-type, the involution $i$ is determined by $i^*$:

\begin{thm} \label{thm:faithful}
Let $X$ be an irreducible symplectic manifold of $K3^{[n]}$-type and $f:X\to X$ an automorphism acting trivially on $H^2(X,\Z)$. Then $f=\id_X$.
\end{thm}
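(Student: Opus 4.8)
The statement is the injectivity of the representation $\aut(X)\to O(\hxz)$, and the plan is to reduce it to a fixed-point analysis for a finite-order \emph{symplectic} automorphism. First I would observe that $\aut(X)$ is discrete: contraction with the symplectic form $\omega$ gives $H^0(X,\tx)\cong H^0(X,\Omega_X^1)=0$, so $X$ carries no nonzero holomorphic vector fields and $\aut^0(X)=\{\id\}$. Hence the kernel $K$ of $\aut(X)\to O(\hxz)$ is discrete. Since every element of $K$ fixes the class of a Kähler form, $K$ is contained in the isometry group of a fixed Kähler metric, which is compact; a discrete subgroup of a compact group is finite, so $K$ is finite. It thus suffices to rule out an element $f\in K$ of prime order $p$ with $f\neq\id$. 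As $f$ acts trivially on $H^2(X,\C)$ it fixes the line $H^{2,0}(X)=\C\omega$, i.e. $f^*\omega=\omega$, so $f$ is symplectic and its fixed locus $\mathrm{Fix}(f)$ is a smooth symplectic submanifold of $X$.

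Next I would locate the fixed points. Because $H^q(X,\calO_X)$ equals $\C\,\overline{\omega}^{\,q/2}$ for $q$ even and $0$ for $q$ odd, and $f^*\overline{\omega}=\overline{\omega}$, the operator $f^*$ is the identity on $H^\bullet(X,\calO_X)$; the holomorphic Lefschetz number of $f$ is therefore $\sum_{k=0}^{n}1=n+1\neq0$, so $\mathrm{Fix}(f)\neq\emptyset$. The goal is then to upgrade this to $\mathrm{Fix}(f)=X$, which forces $f=\id$. The natural instrument is the topological fixed-point formula for holomorphic maps, $e(\mathrm{Fix}(f))=\sum_i(-1)^i\,\mathrm{tr}\bigl(f^*\mid H^i(X,\Q)\bigr)$: if one could show $f^*$ is trivial not only on $H^2$ but on all of $H^\bullet(X,\Q)$, the right-hand side would be $e(X)$, and a comparison of Euler numbers ought to exclude a proper fixed locus.

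I expect this last comparison to be the main obstacle, since a priori a proper symplectic submanifold need not have smaller Euler characteristic, and controlling the $f^*$-action on the odd- and higher-degree cohomology of a general \kn-type manifold is delicate. The route I would actually trust passes through deformation theory, in the spirit of the rest of the paper. Because $f^*$ is trivial on $H^{1,1}(X)$ and the contraction isomorphism $H^1(X,\tx)\cong H^1(X,\Omega_X^1)=H^{1,1}(X)$ is $f$-equivariant (as $f^*\omega=\omega$), one has $H^1(X,\tx)^f=H^1(X,\tx)$; by equivariant unobstructedness the pair $(X,f)$ then deforms over the entire Kuranishi space $\Def(X)$. Using surjectivity of the period map and connectedness of $\ml$, one can connect $X$ to a Hilbert scheme $S^{[n]}$ and reduce to a statement on $S^{[n]}$, where the cohomology and the fixed-point data are explicitly accessible in terms of those of the K3 surface $S$ and where Nikulin's faithfulness of a finite-order symplectic automorphism on $H^2(S,\Z)$ yields the contradiction. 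The delicate point in this reduction is the \emph{global} extension of $f$ along a path to $S^{[n]}$, which must be justified by properness of the relative automorphism group together with the constancy of the order $p$ and of the fixed-point data along the family; this is where the \kn-type hypothesis is genuinely used, the lattice-theoretic tools being unavailable precisely because $f$ acts trivially on $\hxz$.
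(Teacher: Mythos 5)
Your proposal follows essentially the same route as the paper, whose proof is precisely a citation of the two steps you describe: Beauville's result that an automorphism of $S^{[n]}$ acting trivially on $H^2$ is the identity, extended to arbitrary \kn-type manifolds by the deformation argument (openness and closedness of the locus where $f$ extends along a family, using $H^1(X,\calT_X)^f=H^1(X,\calT_X)$ together with properness of the relative automorphism group fixing a Kähler class). The holomorphic-Lefschetz detour and the preliminary finiteness of the kernel are not needed once the deformation argument is in place, but they do not affect correctness.
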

\begin{proof}
This was shown by Beauville \cite[Prop. 10]{beauvillerem} for the special case $X=S^{[n]}$ for some K3 surface $S$.
The general case follows from \cite[Cor. 6.9]{kaledinverbitsky} (see also \cite[\S 1.2]{markmanintegral}).
\end{proof}
An important invariant of the pair $(X,i)$ is the \emph{invariant sublattice}
\[H^2(X,\Z)^i=\{h\in H^2(X,\Z): i^*(h)=h\}\subset H^2(X,\Z).\]

\begin{exm}
Let $i:S\to S$ be a non-symplectic involution of a K3 surface $S$. This induces a non-symplectic involution of the Hilbert scheme of length $n$ subschemes $Z$ by
\[
\begin{array}{rccl}
i^{[n]}:&S^{[n]}&\to&S^{[n]}\\
&Z&\mapsto&i(Z).
\end{array}
\]
Such an automorphism of $S^{[n]}$ is called \emph{natural}.
Clearly, $i^{[n]}$ leaves the divisor $E$ globally invariant.
Furthermore, with respect to the natural embedding
$\varepsilon:H^2(S,\Z)\hookrightarrow H^2(S^{[n]},\Z)$ the restriction of $(i^{[n]})^*$ to $H^2(S,\Z)$ is given by $i^*$ \cite[Section 3]{boissieresarti}. Therefore, the invariant lattice of $i^{[n]}$ is given by
\[H^2(S^{[n]},\Z)^{i^{[n]}}=\varepsilon(H^2(S,\Z)^i)\oplus\Z e.\]
\end{exm}

We recall some well-known facts about non-symplectic involutions.

\begin{prop} \label{prop:invol}
Let $i:X\to X$ be a non-symplectic involution. Then
\begin{enumerate} \setlength\itemsep{2.5pt}
\item $(\omega,H^2(X,\Z)^i)=0$,
\item $H^2(X,\Z)^{i}\subset H^{1,1}(X,\Z)$,
\item $H^2(X,\Z)^{i}$ is hyperbolic,
\item $X$ is projective.
\end{enumerate}
\end{prop}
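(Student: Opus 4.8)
The plan is to prove the four statements in the order listed, since each of the later ones relies on the earlier. The common starting point is that $i^*$ is an isometry of $\hxz$ satisfying $i^*\omega=-\omega$.

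For (i), given any invariant class $h\in\hxz^i$ I would simply compute $(\omega,h)=(i^*\omega,i^*h)=(-\omega,h)=-(\omega,h)$, which forces $(\omega,h)=0$. For (ii), the identical computation applied to $\bar\omega$ (note $i^*\bar\omega=\overline{i^*\omega}=-\bar\omega$) shows that $\hxz^i$ is orthogonal to both $\omega$ and $\bar\omega$, hence $\hxz^i\subset(H^{2,0}(X)\oplus H^{0,2}(X))^\perp=H^{1,1}(X)$ by property (ii) of the Beauville--Bogomolov proposition. Intersecting with the integral lattice and using \eqref{eq:ns} gives $\hxz^i\subset\ns(X)$.

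For (iii), I would work with the real eigenspace decomposition $H^2(X,\R)=H^2(X,\R)^+\oplus H^2(X,\R)^-$ into the $(+1)$- and $(-1)$-eigenspaces of $i^*$. These are orthogonal, because for an isometry the eigenspaces attached to eigenvalues $\lambda,\mu$ with $\lambda\mu\neq1$ are orthogonal, and here $\lambda\mu=-1$. Since the total signature is $(3,b_2(X)-3)$, it suffices to locate the three positive directions. Writing $\omega=a+\sqrt{-1}\,b$ with $a,b\in H^2(X,\R)$, the relations $(\omega,\omega)=0$ and $(\omega,\bar\omega)>0$ translate into $(a,a)=(b,b)>0$ and $(a,b)=0$, so $\langle a,b\rangle_\R$ is a positive-definite $2$-plane; moreover $i^*\omega=-\omega$ forces $a,b\in H^2(X,\R)^-$. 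Thus the anti-invariant part already carries at least two positive directions, leaving at most one for $\hxz^i\otimes\R$. For the matching lower bound I would average a Kähler class: if $\kappa\in\kx$ then $i^*\kappa$ is again Kähler, and by convexity of the Kähler cone so is $\kappa+i^*\kappa$; this class is $i^*$-invariant and satisfies $(\kappa+i^*\kappa,\kappa+i^*\kappa)>0$, producing a positive direction inside the invariant part. Hence $\hxz^i$ has signature $(1,k)$ and is therefore hyperbolic.

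For (iv), I would invoke the projectivity criterion for irreducible symplectic manifolds (due to Huybrechts): $X$ is projective as soon as $\ns(X)$ contains a class of positive Beauville--Bogomolov square. By (iii) the open cone $\{v:(v,v)>0\}$ in $\hxz^i\otimes\R$ is nonempty, hence contains rational and so integral classes; by (ii) any such class lies in $\ns(X)$, and projectivity follows. I expect the substantive step to be (iii), and within it the demonstration that the invariant part carries \emph{exactly} one positive direction: the averaging construction $\kappa+i^*\kappa$ is the crucial input for the lower bound, whereas the upper bound is routine signature bookkeeping once the $\omega$-plane is seen to be positive-definite and anti-invariant.
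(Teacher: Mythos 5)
Your proposal is correct and follows essentially the same route as the paper: the conjugation trick $(\omega,h)=(i^*\omega,i^*h)=-(\omega,h)$ for (i)--(ii), averaging a Kähler class to $\kappa+i^*\kappa$ to produce an invariant class of positive square for (iii), and Huybrechts' projectivity criterion for (iv). The only difference is that you make the signature bookkeeping (orthogonality of the $\pm1$-eigenspaces and the positive-definite anti-invariant $\omega$-plane) explicit, whereas the paper leaves the upper bound on positive directions implicit via the containment $H^2(X,\R)^i\subset H^{1,1}(X,\R)$.
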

\begin{proof}
For every invariant class $h\in H^2(X,\Z)^i$, we have 
\[(\omega,h)=(i^*(\omega),i^*(h))=-(\omega,h),\]
which shows (i), since $H^2(X,\Z)$ is torsion-free. Together with \eqref{eq:ns} this implies (ii).
If $x\in H^2(X,\R)$ is a Kähler class, then $i^*(x)$ is a Kähler class and therefore
\[\tilde{x}:=x+i^*(x)\in H^2(X,\R)^i\]
is an invariant Kähler class. Since $(\tilde{x},\tilde{x})>0$, this implies that
\[H^2(X,\R)^i=H^2(X,\Z)^i\otimes\R\subset H^{1,1}(X,\R)\] 
is hyperbolic and hence (iii).
Part (iv) is a special case of \cite[Prop. 6]{beauvillerem}. Alternatively, it follows from (iii) and Huybrechts' projectivity criterion \cite[Thm. 3.11]{huybrechtsbasic}.
\end{proof}

\begin{dfn}
A \emph{family} $(\pi,I):\calX\to T$ of non-symplectic involutions over a connected smooth analytic space $T$ consists of
\begin{enumerate}
\item a smooth and proper family $\pi:\calX\to T$ of irreducible symplectic manifolds, and
\item a holomorphic involution $I:\calX\to\calX$ with $\pi\circ I=\pi$, such that for every $t\in T$, the induced involution $I_t:X_t\to X_t$ is non-symplectic. 
\end{enumerate}
\end{dfn}

\begin{dfn}
Let $i_1:X_1\to X_1$ and $i_2:X_2\to X_2$ be non-symplectic involutions. 
\begin{enumerate}
\item The pairs $(X_1,i_1)$ and $(X_2,i_2)$ are \emph{isomorphic}, if there exists an isomorphism $f:X_1\to X_2$ with $i_2\circ f=f\circ i_1$.
\item The pairs $(X_1,i_1)$ and $(X_2,i_2)$ are \emph{deformation equivalent}, if there exists a family $(\pi,I):\calX\to T$ of non-symplectic involutions and points $t_j\in T$ with $(X_{t_j},I_{t_j})\cong (X_j,i_j)$ for $j=1,2$. 
\end{enumerate}
\end{dfn}

The invariant sublattice of an involution is a deformation invariant in the following sense.

\begin{dfn} \label{dfn:latticetype}
$(X_1,i_1)$ and $(X_2,i_2)$ are of the same \emph{lattice type}, if there exists a parallel transport operator $g:H^2(X_1,\Z)\to H^2(X_2,\Z)$ with $g\circ i_1^*=i_2^*\circ g$.
\end{dfn}

\begin{prop} \label{prop:latticetype}
Let $i_1:X_1\to X_1$ and $i_2:X_2\to X_2$ be non-symplectic involutions such that $(X_1,i_1)$ and $(X_2,i_2)$ are deformation equivalent. Then $(X_1,i_1)$ and $(X_2,i_2)$ are of the same lattice type.
\end{prop}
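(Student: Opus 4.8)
The plan is to exploit the fact that the family witnessing deformation equivalence carries the involution $I$ globally, so that parallel transport in the associated local system $R^2\pi_*\Z$ is automatically equivariant. First I would record that being of the same lattice type is an equivalence relation which is compatible with isomorphisms of pairs. Indeed, parallel transport operators are closed under composition and inversion, and for a biholomorphism $f\colon Y\to X$ the induced isometry $f^*\colon H^2(X,\Z)\to H^2(Y,\Z)$ is a parallel transport operator. Moreover, if $f$ intertwines two non-symplectic involutions, $f\circ j_1=j_2\circ f$, then pulling back reverses the order and yields $f^*\circ j_2^*=j_1^*\circ f^*$, which is exactly the relation of Definition~\ref{dfn:latticetype}. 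Combining these facts with transitivity, I would reduce the statement to showing that the fiber pairs $(X_{t_1},I_{t_1})$ and $(X_{t_2},I_{t_2})$ are of the same lattice type, where $(\pi,I)\colon\calX\to T$ and $t_1,t_2\in T$ are as in the definition of deformation equivalence, and the isomorphisms $\phi_j\colon X_{t_j}\to X_j$ satisfy $i_j\circ\phi_j=\phi_j\circ I_{t_j}$.

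The heart of the argument is the following. Since $T$ is connected (and smooth analytic, hence path-connected), choose a path $\gamma\colon[0,1]\to T$ from $t_1$ to $t_2$, and let $P_\gamma\colon H^2(X_{t_1},\Z)\to H^2(X_{t_2},\Z)$ be the parallel transport in $R^2\pi_*\Z$ along $\gamma$; by construction $P_\gamma$ is a parallel transport operator. The involution $I$ is fiber-preserving, $\pi\circ I=\pi$, so it is a single continuous self-map of the total space $\calX$ covering $\id_T$, and therefore induces an endomorphism of the local system $R^2\pi_*\Z$ whose stalk at $t$ is the fiberwise isometry $I_t^*$. Because an endomorphism of a local system is flat for the Gauss--Manin connection, it commutes with parallel transport along $\gamma$, giving
\[
P_\gamma\circ I_{t_1}^*=I_{t_2}^*\circ P_\gamma .
\]
This equivariance is precisely the relation required, so $(X_{t_1},I_{t_1})$ and $(X_{t_2},I_{t_2})$ are of the same lattice type via $P_\gamma$.

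Finally I would transport the conclusion back to the original pairs by setting $g:=(\phi_2^*)^{-1}\circ P_\gamma\circ\phi_1^*\colon H^2(X_1,\Z)\to H^2(X_2,\Z)$, which is a parallel transport operator as a composite of such. Using $\phi_j^*\circ i_j^*=I_{t_j}^*\circ\phi_j^*$ together with the equivariance above, a short diagram chase gives $g\circ i_1^*=i_2^*\circ g$, establishing that $(X_1,i_1)$ and $(X_2,i_2)$ are of the same lattice type. I expect the only nonroutine point to be the flatness step: one must verify that $I$ genuinely defines a morphism of local systems over $T$ rather than a mere fiberwise collection of isometries, and then invoke the standard fact that morphisms of local systems commute with monodromy and hence with parallel transport. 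This holds because the maps $I_t^*$ vary continuously, and thus locally constantly, in $t$; everything else is bookkeeping.
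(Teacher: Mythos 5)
Your argument is correct and is essentially the paper's own approach: the paper disposes of this proposition by citing Ehresmann's theorem and the references of Ohashi--Wandel and Boissi\`ere--Camere--Sarti, and the argument contained there is exactly the one you write out, namely that the global involution $I$ induces an endomorphism of the local system $R^2\pi_*\Z$ and therefore commutes with parallel transport along any path in $T$. Your reduction to the fibres via $\phi_j^*$ and the final conjugation are the same routine bookkeeping the cited sources perform, so there is nothing to add.
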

\begin{proof}
This is a  consequence of Ehresmann's theorem, see for example \cite[Prop. 2.2]{ohashiwandel} or \cite[\S 4]{boissierecameresarti}.
\end{proof}

For K3 surfaces, the converse of Proposition \ref{prop:latticetype} is true. In fact, it suffices to assume that $i_1:S_1\to S_1$ and $i_2:S_2\to S_2$ are non-symplectic involutions of K3 surfaces with $H^2(S_1,\Z)^{i_1}\cong H^2(S_2,\Z)^{i_2}$ as lattices. Then $(S_1,i_1)$ and $(S_2,i_2)$ are deformation equivalent by \cite[Rem. 4.5.3]{nikulin1}.

For non-symplectic involutions of manifolds of \kn-type, being of the same lattice type is in general a stronger property than having isometric invariant lattices. However, we will see that even involutions of the same lattice type are not necessarily deformation equivalent.

%%%%%%%%%%%
%   DEF   %
%%%%%%%%%%%

\section{Local deformation space} \label{sec:localdef}

The local deformation theory of \kn-type manifolds with non-symplectic involutions has been described by Beauville \cite[Thm. 2]{beauvilleinv}. A more detailed discussion for automorphisms of prime order on irreducible symplectic manifolds is given in \cite[Section 4]{boissierecameresarti}. We briefly recall the facts.
 
Let $X$ be a manifold of \kn-type with a non-symplectic involution $i:X\to X$ and let
$\pi:\calX\to\Def(X)$ be the Kuranishi family of $X=\pi^{-1}(0)$. The involution $i$ on $X$ extends holomorphically to an involution $I:\calX\to\calX$, and by the universality of the Kuranishi family, this defines an action of $i$ on $\Def(X)$.
The deformation space $\Def(X)$ can be locally identified with $H^1(X,\tx)$ and the actions of $i$ on these spaces coincide under this identification. 
This shows that the invariant subspace $\Def(X,i):=\Def(X)^i$ is smooth.

Moreover, the symplectic form defines an isomorphism $\tx\to\Omega_X^1$, which maps the invariant subspace of $H^1(X,\tx)$ to the $(-1)$-eigenspace of $H^1(X,\Omega_X^1)$. 
In particular, the dimension of $\Def(X,i)$ is $21-\rk(H^2(X,\Z)^i)$.
Furthermore, the Kuranishi family restricts to a family
\[\pi':\calX'\to\Def(X,i),\] such that $I':=I|_{\calX'}$ preserves the fibres of $\pi'$.
\begin{exm} \label{exm:def}
Let $i:S\to S$ be a non-symplectic involution of a K3 surface $S$ and 
$i^{[n]}:S^{[n]}\to S^{[n]}$ be the natural involution. Any deformation of $(S,i)$ induces a deformation of $(S^{[n]},i^{[n]})$. On the other hand, we have 
\[H^{1,1}(S^{[n]})^{i^{[n]}}=\varepsilon(H^{1,1}(S)^i)\oplus\C e,\]
hence $\varepsilon$ maps $\Def(S,i)$ onto $\Def(S^{[n]},i^{[n]})$. Every small deformation of $(S^{[n]},i^{[n]})$ is induced by a deformation of $(S,i)$.
\end{exm}

%%%%%%%%%%%%%%
%   PERIOD   %
%%%%%%%%%%%%%%

\section{Period map}

From now on, we will only consider irreducible symplectic manifolds of \kn-type.

\begin{dfn} \label{dfn:admissiblelattice}
A sublattice $M\subset L_n$ is called \emph{admissible}, if 
\begin{enumerate}
\item $M$ is hyperbolic,
\item there exists an involution $\iota_M\in\mon^2(L_n)$ such that $M=(L_n)^{\iota_M}$.
\end{enumerate}
\end{dfn}

If $X$ is a manifold of \kn-type and $i:X\to X$ is a non-symplectic involution, then any marking
$\alpha:H^2(X,\Z)\to L_n$
 maps the invariant sublattice $H^2(X,\Z)^i\subset H^2(X,\Z)$ to some admissible sublattice $M\subset L_n$. 

In the case $n=2$, admissible sublattices have been classified by Boissi\`ere, Camere and Sarti in \cite{boissierecameresarti}. Moreover, it is shown that every such sublattice is isometric to the invariant
sublattice $H^2(X,\Z)^i\subset H^2(X,\Z)$ for some non-symplectic involution $i:X\to X$ of a $K3^{[2]}$-type manifold \cite[Prop. 8.2]{boissierecameresarti}.
We will see that the same is true for $n>2$.

We now fix a connected component $\frakM_{L_n}^0$ of the moduli space of marked manifolds of \kn-type and denote by $P_0:\frakM_{L_n}^0\to\Omega_L$ the restriction of the period map. This allows us to identify lattice types of non-symplectic involutions with $\mon^2(L_n)$-orbits of admissible sublattices.

\begin{dfn} \label{dfn:typem}
Let $M\subset L_n$ be an admissible sublattice with corresponding involution $\iota_M\in\mon^2(L_n)$ and let $i:X\to X$ be a non-symplectic involution of a \kn-type manifold $X$.
\begin{enumerate} \setlength\itemsep{.5em}
\item An \emph{($M$-)admissible marking} of $(X,i)$ is a marking $\alpha:H^2(X,\Z)\to L_n$ satisfying $(X,\alpha)\in\ml$
and $\alpha\circ i^*=\iota_M\circ\alpha$.
\item The pair $(X,i)$ is called \emph{of type} $M$, if there exists an $M$-admissible marking of $(X,i)$.
\end{enumerate}
\end{dfn}

\begin{rem}
If $(X,i)$ is of type $M$, then $H^2(X,\Z)^i$ is isometric to $M$. However, the converse is not true in general.
The definition of type $M$ depends a priori on the connected component $\frakM_{L_n}^0$ and on the embedding $M\subset L_n$. If $n=2$ or $n-1$ is a prime power, then the choice of $\frakM_{L_n}^0$ is irrelevant by Lemma \ref{lem:monodromy}. An example of non-isometric admissible sublattices $M,M'\subset L_2$ that are isometric as lattices is given in \cite[Example. 8.6]{boissierecameresarti}.
\end{rem}

Assume that $(X_1,i_1)$ is of type $M$ and $\alpha:H^2(X_1,\Z)\to L_n$ is an admissible marking. If $(X_2,i_2)$ is of the same lattice type as $(X_1,i_1)$, that is, there exists a parallel transport operator $g:H^2(X_1,\Z)\to H^2(X_2,\Z)$ with $g\circ i_1^*=i_2^*\circ g$, then $\alpha\circ g^{-1}$ is an admissible marking for $(X_2,i_2)$. In particular, a deformation of a pair of type $M$ is again of type $M$. As remarked before, the converse is true for K3 surfaces. Our goal is to give a lattice-theoretic description of the deformation types of pairs of type $M$.

Let
\[\calM_M:=\{(X,i): (X,i) \text{ is a pair of type } M\}/\cong.\]
For now,  we will consider $\calM_M$ only as a set. 

Let $(X,i)\in\calM_M$ and $\alpha:H^2(X,\Z)\to L_n$ be an admissible marking. Proposition \ref{prop:invol} (i) implies that
\[P_0(X,\alpha)\in\Omega_\Mp\subset\Omega_L.\] 
Consider the subgroup
\begin{align*}
\Gamma(M)&:=\{\sigma\in\mon^2(L_n):\sigma\circ \iota_M=\iota_M\circ\sigma\}\\
&\phantom{:}=\{\sigma\in\mon^2(L_n):\sigma(M)=M\}.
\end{align*}
If $(X,i)$ and $(Y,j)$ are of type $M$ 
with admissible markings $\alpha:H^2(X,\Z)\to L_n$ and $\beta:H^2(Y,\Z)\to L_n$, and $f:(X,i)\to (Y,j)$ is an isomorphism, then $f^*$ is a Hodge isometry, and therefore
\[P_0(X,\alpha)=\sigma(P_0(Y,\beta)),\quad\text{where}\quad\sigma:=\alpha\circ f^*\circ\beta^{-1}.\]
Since $f^*$ is a parallel transport operator and $(X,\alpha)$ and $(Y,\beta)$ belong to the same connected component of $\frakM_{L_n}$, we have $\sigma\in\mon^2(L_n)$. Furthermore, using $i^*\circ f^*=f^*\circ j^*$, we obtain
\begin{align*}
\iota_M\circ\sigma&=\iota_M\circ\alpha\circ f^*\circ\beta^{-1}=\alpha\circ i^*\circ f^*\circ\beta^{-1}\\
&=\alpha\circ f^*\circ j^*\circ\beta^{-1}=\alpha\circ f^*\circ\beta^{-1}\circ\iota_M=\sigma\circ\iota_M
\end{align*}
and hence $\sigma\in\Gamma(M)$. Thus the period map induces a map 
\begin{equation} \label{eq:period}
P_M:\calM_M\quad\longrightarrow\quad\Omega_\Mp/\gmp,
\end{equation}
where $\gmp\subset O(\Mp)$ is the image of the restriction homomorphism 
\[\Gamma(M)\to O(\Mp).\]

\begin{prop}
$\Gamma_\Mp\subset O(\Mp)$ is a finite index subgroup.
\end{prop}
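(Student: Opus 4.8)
The plan is to show that $\gmp$ contains a subgroup of finite index in $O(\Mp)$; since any group sandwiched between a finite-index subgroup and the ambient group is itself of finite index, this suffices. The natural candidate is the stable isometry group $\so(\Mp)$, which is of finite index in $O(\Mp)$ because $A_\Mp$ is finite. First I would record that $M=(L_n)^{\iota_M}$ is primitive: if $kv\in M$ for some $v\in L_n$ and $k\neq 0$, then $k\,\iota_M(v)=\iota_M(kv)=kv$ forces $\iota_M(v)=v$, so $v\in M$. Consequently $(\Mp)^\perp=M$, and I may apply Lemma~\ref{lem:stableisometry} to the sublattice $\Mp\subset L_n$: every $\sigma\in\so(\Mp)$ extends to an isometry $\tilde\sigma\in\so(L_n)$ with $\tilde\sigma|_{M}=\id_M$. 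In particular $\tilde\sigma(M)=M$, while $\tilde\sigma|_{\Mp}=\sigma$.

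It then remains to check that $\tilde\sigma\in\mon^2(L_n)$, for in that case $\tilde\sigma\in\Gamma(M)$ and hence $\sigma=\tilde\sigma|_{\Mp}\in\gmp$. By Lemma~\ref{lem:monodromy}, the group $\mon^2(L_n)$ is the preimage of $\{-1,1\}$ under the homomorphism $O^+(L_n)\to O(L_n^*/L_n)$. Since $\tilde\sigma\in\so(L_n)$ acts trivially on the discriminant group $A_{L_n}$, its image automatically lies in $\{-1,1\}$, so the only remaining condition is that $\tilde\sigma$ have real spinor norm $1$, i.e. $\tilde\sigma\in O^+(L_n)$. Because the real spinor norm is multiplicative over the orthogonal decomposition $L_n\otimes\R=(M\otimes\R)\oplus(\Mp\otimes\R)$ and $\tilde\sigma$ acts as the identity on $M\otimes\R$ and as $\sigma$ on $\Mp\otimes\R$, the real spinor norm of $\tilde\sigma$ coincides with that of $\sigma$. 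Hence $\tilde\sigma\in O^+(L_n)$ precisely when $\sigma\in O^+(\Mp)$.

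Putting this together, every $\sigma$ in the subgroup $\so(\Mp)\cap O^+(\Mp)$ extends to a monodromy operator fixing $M$, whence
\[\so(\Mp)\cap O^+(\Mp)\subseteq\gmp\subseteq O(\Mp).\]
Since $\so(\Mp)$ is of finite index in $O(\Mp)$ and $O^+(\Mp)$ is of index at most two, the left-hand group is of finite index in $O(\Mp)$, and therefore so is $\gmp$. The one point requiring genuine care is the monodromy verification in the second step: the stability of the extension $\tilde\sigma$ takes care of the discriminant condition for free, but controlling the real spinor norm is exactly what forces the passage to $O^+(\Mp)$ rather than all of $O(\Mp)$, and it is worth checking explicitly that the spinor norm behaves multiplicatively with respect to the block decomposition so that the identity factor on $M$ contributes trivially.
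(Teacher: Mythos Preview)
Your proof is correct and follows essentially the same strategy as the paper: extend stable isometries of $\Mp$ to stable isometries of $L_n$ via Lemma~\ref{lem:stableisometry}, then invoke Lemma~\ref{lem:monodromy} to land in $\mon^2(L_n)$ and hence in $\Gamma(M)$. The only cosmetic difference is that the paper handles the spinor-norm obstruction by replacing the extension $\tilde\sigma$ with $-\tilde\sigma$ when necessary (which still maps to $\{-1,1\}$ in $O(A_{L_n})$ and still preserves $M$), whereas you restrict at the outset to $\so(\Mp)\cap O^+(\Mp)$; both devices yield a finite-index subgroup contained in $\gmp$.
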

\begin{proof}
It suffices to show that $\Gamma_\Mp$ contains the finite index subgroup 
\[\widetilde{O}^+(\Mp):=\widetilde{O}(\Mp)\cap O^+(\Mp)\subset O(\Mp),\] where $O^+(\Mp)\subset O(\Mp)$ is the index two subgroup of isometries of real spinor norm 1. By Lemma \ref{lem:stableisometry}, any isometry $\sigma\in \widetilde{O}^+(\Mp)$ extends to an isometry $\widetilde{\sigma}\in\widetilde{O}(L_n)$ with $\widetilde{\sigma}|_M=\id_M\in O^+(M)$ and hence $\widetilde{\sigma}\in O^+(L_n)$.
By Lemma \ref{lem:monodromy}, we have $\widetilde{\sigma}\in\mon^2(L_n)$, which shows $\widetilde{\sigma}\in\Gamma(M)$ and consequently $\sigma\in\Gamma_\Mp$. 
\end{proof}
Since $\sig(\Mp)=(2,r(\Mp)-2)$, the period domain $\Omega_\Mp$ consists of two connected components $\Omega_\Mp^+$ and $\Omega_\Mp^-$, each of which is isomorphic to a bounded symmetric domain.
Moreover, the finite index subgroup $\gmp\subset O(\Mp)$ acts properly discontinuously on $\Omega_\Mp$, and the quotient $\Omega_{M^\perp}/\gmp$ is a quasi-projective variety by \cite[Thm. 10.4 and Thm 10.11]{bailyborel}.

Assume that $(\pi,I):\calX\to T$ is a holomorphic family of involutions of type $M$. Then the holomorphicity of the ordinary period map implies that the induced map
\begin{align*}
T&\ \to\ \Omega_\Mp/\Gamma_\Mp\\ 
t&\ \mapsto\ P_M(X_t,I_t)
\end{align*}
is holomorphic.

For K3 surfaces, one has the following result due to Nikulin and Yoshikawa.

\begin{thm} \label{thm:k3}
The period map $P_M:\calM_M\to\Omega_\Mp/\Gamma_\Mp$ is injective and its image is a Zariski-open subset $\Omega_\Mp^0/\Gamma_\Mp$.
In particular, $\Omega_\Mp^0/\Gamma_\Mp$ is a coarse moduli space of pairs of type $M$.
\end{thm}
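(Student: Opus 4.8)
The plan is to derive the theorem from the Global Torelli theorem and the surjectivity of the period map for K3 surfaces, following Nikulin and Yoshikawa. Throughout I would use that for a K3 surface $S$ the lattice $H^2(S,\Z)$ is isometric to $L_{K3}$, that $\mon^2(H^2(S,\Z))=O^+(H^2(S,\Z))$ and hence that every such operator preserves the positive cone $\calC_S$, and that a non-symplectic involution $i$ acts as $+\id$ on $H^2(S,\Z)^i$ and as $-\id$ on its orthogonal complement, so that under an admissible marking $\iota_M=\id_M\oplus(-\id_{\Mp})$ and $P_0(S,\alpha)\in\Omega_\Mp$. A further standard input is that the Weyl group $W(S)$ generated by reflections in the $(-2)$-classes of $\ns(S)$ acts simply transitively on the Weyl chambers of $\calC_S$, with $\calK_S$ one of them; these reflections are Hodge isometries lying in $\mon^2(H^2(S,\Z))$.

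For injectivity I would take $(S_1,i_1),(S_2,i_2)$ of type $M$ with admissible markings $\alpha_1,\alpha_2$ and $P_M(S_1,i_1)=P_M(S_2,i_2)$. After replacing $\alpha_2$ by $\sigma\circ\alpha_2$ for a suitable $\sigma\in\Gamma(M)$ identifying the two period points in $\Omega_\Mp$, the isometry $g:=\alpha_2^{-1}\circ\alpha_1$ is a Hodge isometry, a parallel transport operator (both markings lie in $\ml$), and satisfies $g\circ i_1^*=i_2^*\circ g$. Since $g$ preserves positive cones and the $(-2)$-classes of $\ns$, the image $g(\calK_{S_1})$ is a Weyl chamber of $\calC_{S_2}$, and it is $i_2^*$-invariant because $i_2^*\,g(\calK_{S_1})=g(i_1^*\calK_{S_1})=g(\calK_{S_1})$. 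Let $w\in W(S_2)$ be the unique element with $w(g(\calK_{S_1}))=\calK_{S_2}$. Conjugating this equality by $i_2^*$ shows $i_2^*w(i_2^*)^{-1}$ also sends $g(\calK_{S_1})$ to $\calK_{S_2}$, so by simple transitivity $w$ commutes with $i_2^*$. Then $w\circ g$ is a Hodge parallel transport operator carrying $\calK_{S_1}$ onto $\calK_{S_2}$ and still commuting with the involutions, so the Global Torelli theorem yields $f:S_2\to S_1$ with $f^*=w\circ g$; the relation $(wg)\circ i_1^*=i_2^*\circ(wg)$ together with Theorem \ref{thm:faithful} forces $f\circ i_2=i_1\circ f$, whence $(S_1,i_1)\cong(S_2,i_2)$.

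For the image I claim $\Omega_\Mp^0=\Omega_\Mp\setminus\bigcup_\delta\delta^\perp$, the union over all $(-2)$-classes $\delta\in\Mp$. If $\omega=P_M(S,i)$ lay on some $\delta^\perp$ with $\delta\in\Mp$ a root, then $\alpha^{-1}(\delta)\in\ns(S)$ would be a root with $i^*\alpha^{-1}(\delta)=-\alpha^{-1}(\delta)$, forcing both $\pm\alpha^{-1}(\delta)$ to be effective, which is absurd; hence the image lies in $\Omega_\Mp^0$. Conversely, for $\omega\in\Omega_\Mp^0$ the surjectivity of the period map provides $(S,\alpha)\in\ml$ with $P_0(S,\alpha)=\omega$, and $\phi:=\alpha^{-1}\circ\iota_M\circ\alpha$ is an involutive Hodge parallel transport operator. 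Because $\omega\in\Omega_\Mp^0$ there is no root $\delta\in\ns(S)$ with $\phi(\delta)=-\delta$, so every root has nonzero projection to the invariant part $\alpha^{-1}(M)_\R$; consequently a generic vector of the invariant positive cone avoids all root walls, and the unique Weyl chamber $D$ containing this $\phi$-fixed vector is $\phi$-invariant. Choosing $w\in W(S)$ with $w(\calK_S)=D$, the isometry $\phi':=w^{-1}\phi w$ fixes $\calK_S$ and still sends $\omega$ to $-\omega$ (as $w$ fixes $\omega$), so the Global Torelli theorem realizes it as $i^*$ for a non-symplectic involution $i$ of $S$; replacing $\alpha$ by $\alpha\circ w$ exhibits $(S,i)$ as a pair of type $M$ with $P_M(S,i)=\omega$.

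Finally, the set $\bigcup_\delta\delta^\perp$ is $\gmp$-invariant and, by Lemma \ref{lem:orbits}, consists of finitely many $\gmp$-orbits of hyperplane sections; it therefore descends to a Zariski-closed subset of the quasi-projective variety $\Omega_\Mp/\gmp$, so $\Omega_\Mp^0/\gmp$ is Zariski-open and $P_M$ is a bijection onto it. Combined with the holomorphicity of $t\mapsto P_M(X_t,I_t)$ for every family of involutions of type $M$, established before the statement, and the normality of $\Omega_\Mp^0/\gmp$, this bijection shows that $P_M$ identifies $\Omega_\Mp^0/\gmp$ with a coarse moduli space. I expect the main obstacle to be the root-theoretic heart of the surjectivity step, namely producing a $\phi$-invariant Weyl chamber (equivalently an invariant ample class) from the absence of anti-invariant roots, together with the parallel point in the injectivity argument that the correcting Weyl element commutes with the involution; both are precisely where I would rely on Nikulin's lattice analysis \cite{nikulin2} and Yoshikawa's construction \cite{yoshikawa}.
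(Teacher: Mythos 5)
The paper does not prove this statement: its ``proof'' is the single citation \cite[Thm.\ 1.8]{yoshikawa} (with the underlying deformation argument going back to \cite{nikulin2}), so your proposal is necessarily a different route --- namely a reconstruction of the classical Nikulin--Yoshikawa argument --- and it is essentially correct. Injectivity by correcting the Hodge parallel-transport isometry $g$ with the unique Weyl element $w$ sending $g(\calK_{S_1})$ to $\calK_{S_2}$, and forcing $w$ to commute with $i_2^*$ via simple transitivity of $W(S_2)$ on chambers, is exactly the right mechanism; so is identifying the image as the complement of the $(-2)$-walls of $\Mp$ and producing an $i^*$ from $\iota_M$ by conjugating into a $\phi$-invariant Weyl chamber. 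It is worth noticing that your argument is, in spirit, the $n=1$ specialization of the machinery the paper develops later (Lemma \ref{lem:image}, Proposition \ref{prop:inv}, Theorems \ref{thm:def} and \ref{thm:modnat}): for K3 surfaces the wall divisors are precisely the $(-2)$-classes, the reflections in them are Hodge monodromy operators lying in $\Gamma(M)$, so $\Gamma_M$ acts transitively on $\kt(M)$ and the phenomena that force the paper to introduce stable invariant K\"ahler cones and to restrict to simple pairs (several deformation types, non-injectivity of $P_M$, inseparable pairs) all collapse; your Weyl-group correction is what replaces the paper's ``choose another birational model'' step in Lemma \ref{lem:image}. Two places in your sketch are thinner than the rest and would need to be filled in: the existence of a $\phi$-fixed vector avoiding all root walls requires the local finiteness of the arrangement $\{\delta^\perp\}$ in $\calC_S$ (there are infinitely many roots, so ``generic'' needs either local finiteness or a Baire argument, as in Lemma \ref{lem:discrloc}); and the concluding ``coarse moduli space'' claim needs the universal property, not merely a holomorphic bijection onto a normal quasi-projective variety --- though the paper glosses this point as well, and the analogous verification for $K3^{[n]}$ is only sketched in Theorem \ref{thm:modnat}.
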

\begin{proof}
\cite[Thm. 1.8]{yoshikawa}
\end{proof}

For manifolds of \kn-type, we will see that even when the period map $P_M$ is restricted to involutions of a fixed deformation type, it need not be generically injective. 
However, once we fix a deformation type $\calK$, we will be able
to use a finer period map
\[P_{M,\calK}:\calM_{M,\calK}\to\Omega^+_\Mp/\Gamma_{\Mp,\calK}\] 
for some finite index subgroup $\Gamma_{\Mp,\calK}\subset\gmp$ and show that this map is generically injective.

%%%%%%%%%%%%
%   SIKC   %
%%%%%%%%%%%%

\section{Stable invariant Kähler cone} \label{sec:sikc}

For a non-symplectic involution $i:X\to X$ let
\[\calC_X^i:=\{x\in\calC_X:i^*(x)=x\}\]
be the invariant positive cone and
\[\Delta^i(X):=\{D\in\Delta(X):i^*(D)=D\}\]
the set of invariant wall divisors of $(X,i)$.
It follows from Theorem \ref{thm:ktchambers} that the invariant Kähler cone $\calK_X^i=\calK_X\cap\cx^i$
of $(X,i)$ is contained in a connected component of
\begin{equation} \label{eq:sik}
\cx^i\setminus\bigcup_{D\in\Delta^i(X)} D^\perp.
\end{equation}

\begin{dfn}
The \emph{stable invariant Kähler cone} $\widetilde{\calK}_X^i$ of $(X,i)$ is the component of (\ref{eq:sik}) containing the invariant Kähler cone of $(X,i)$.
\end{dfn}

We will give a geometric interpretation of $\sik$ in Proposition \ref{prop:stablyample}. The distinction between invariant and non-invariant wall divisors is motivated by the following observations.

Assume that $i:X\to X$ is a non-symplectic involution and $f:X\dashrightarrow\widetilde{X}$ is a different birational model such that the induced birational involution
\[\tilde{i}:=f\circ i\circ f^{-1}:\widetilde{X}\to\widetilde{X}\]
is again biregular (see Example \ref{exm:comp} for a geometric realization of this situation). Then $f^*:H^2(\widetilde{X},\Z)\to H^2(X,\Z)$ is a parallel transport Hodge isometry satisfying $f^*\circ i^*=(\tilde{i})^*\circ f^*$, which implies $P_M(X,i)=P_M(\widetilde{X},\tilde{i})$.

If $\kx$ and $f^*\calK_{\widetilde{X}}$ are separated by a wall $D^\perp$ for an invariant wall divisor $D\in\Delta^i(X)$, then by Proposition \ref{prop:invol} (ii), the class $D$ remains of type $(1,1)$ for any deformation of the pair $(X,i)$. In this case the two birational models deform into different families. (At least locally; globally the families can be the same, as we will see in Example \ref{exm:four}.) In Section \ref{sec:defeq} we will see that the equivalence class of the stable invariant Kähler cone up to parallel transport determines the deformation type of a non-symplectic involution of a manifold of \kn-type.

If on the other hand $D$ belongs to $\Delta(X)\setminus\Delta^i(X)$, then the corresponding wall vanishes under some deformation of $(X,i)$, and the two pairs $(X,i)$ and $(\widetilde{X},\tilde{i})$ deform into the same family.
We will see that in this case one has $\calK_X^i\subsetneq\widetilde{\calK}_X^{i}$.
Our goal in this section is to identify a Zariski-closed subset $\calD_M'\subset\Omega_\Mp/\Gamma_\Mp$ such that $\widetilde{\calK}_X^{i}=\calK_X^i$ for every $(X,i)$ with $P_M(X,i)\not\in\calD_M'$. For this purpose, we will need the lattice-theoretic analogue of wall divisors.

\begin{prop} \label{prop:newample}
 For any connected component $\frakM_{L_n}^0$ of the moduli space of marked pairs of \kn-type, there exists a subset $\Delta(L_n)\subset L_n$ with the following properties:
\begin{enumerate}
\item For any $(X,\alpha)\in \frakM_{L_n}^0$, we have
\[\Delta(X)=\alpha^{-1}(\Delta(L_n))\cap H^{1,1}(X,\Z).\]
\item The group $\mon^2(L_n)$ acts on $\Delta(L_n)$ with a finite number of orbits.
\end{enumerate}
\end{prop}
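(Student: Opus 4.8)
The plan is to transport the deformation-invariance of wall divisors (Theorem \ref{thm:mbmdef}) from the manifold level to the lattice level, using the fact that the set of wall divisors is cut out by purely numerical conditions that are monodromy-invariant. First I would define the candidate set directly in the lattice: fix a base pair $(X_0,\alpha_0)\in\frakM_{L_n}^0$ and set
\[
\Delta(L_n):=\bigcup_{D\in\Delta(X_0)}\mon^2(L_n)\cdot\alpha_0(D)\subset L_n,
\]
the $\mon^2(L_n)$-orbit of the markings of wall divisors of a single manifold. Property (ii) is then almost immediate: by Lemma \ref{lem:orbits} there are only finitely many $O(L_n)$-orbits, hence finitely many $\mon^2(L_n)$-orbits, of vectors of each fixed square length, and the (finite) set $\Delta(X_0)$ meets only finitely many such length values; therefore $\Delta(L_n)$ is a finite union of $\mon^2(L_n)$-orbits by construction. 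The real content is property (i), the assertion that this single orbit recovers $\Delta(X)$ for \emph{every} $(X,\alpha)$ in the component.

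The core of the argument is a two-sided inclusion established via deformation. For the inclusion $\Delta(X)\subset\alpha^{-1}(\Delta(L_n))$, I would argue that any wall divisor $D$ on any $(X,\alpha)$ lies in the $\mon^2(L_n)$-orbit of a wall divisor on $X_0$. Since $\frakM_{L_n}^0$ is connected and the period map is a local isomorphism with $P_0$ surjective onto $\Omega_L$, I can connect $(X,\alpha)$ to $(X_0,\alpha_0)$ by a path and use surjectivity of the period map together with Theorem \ref{thm:mbmdef} to deform $D$ along this path. The key point is that a monodromy birationally minimal class stays monodromy birationally minimal under any deformation keeping it of type $(1,1)$; by choosing a path of periods along which $\alpha^{-1}(\alpha(D))$ remains algebraic (possible because the condition $(\eta,\alpha(D))=0$ cuts out a subdomain of $\Omega_L$ that is itself a period domain and meets every component), the class $\alpha(D)$ reaches a wall divisor on $X_0$, up to the monodromy ambiguity introduced by the choice of marking along the path. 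The reverse inclusion $\alpha^{-1}(\Delta(L_n))\cap H^{1,1}(X,\Z)\subset\Delta(X)$ runs the same deformation argument backwards: starting from a wall divisor on $X_0$ and a class $v\in\Delta(L_n)$ with $\alpha^{-1}(v)\in H^{1,1}(X,\Z)$, deform back to $(X,\alpha)$ keeping $\alpha^{-1}(v)$ algebraic and apply Theorem \ref{thm:mbmdef} in the other direction.

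The main obstacle will be the monodromy bookkeeping in this deformation argument: Theorem \ref{thm:mbmdef} is stated for a single family and yields the monodromy birationally minimal property only up to the monodromy acting along the chosen path, whereas property (i) must hold with the \emph{fixed} group $\mon^2(L_n)=\mon^2(\frakM_{L_n}^0)$ independent of the manifold. The resolution is that $\mon^2(L_n)$ is by definition exactly the group of parallel-transport monodromy operators identified across the component $\frakM_{L_n}^0$, so any monodromy operator picked up along a path connecting two marked pairs in the component already lies in $\mon^2(L_n)$; thus the orbit is genuinely well-defined and independent of base point. I would also need to check that a wall divisor can always be kept algebraic along a suitable path, i.e.\ that the locus in $\Omega_L$ where a fixed class is of type $(1,1)$ is connected and meets the period of any target manifold—this follows because that locus is a period domain $\Omega_{\langle v\rangle^\perp}$ of signature $(2,\ast)$, which is connected after passing to a single component and is hit by surjectivity of $P_0$ restricted to it. Once these two technical points are in place, combining them with Theorem \ref{thm:mbmdef} gives both inclusions and hence property (i).
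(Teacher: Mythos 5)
Your proposal has a genuine gap, and it originates in the choice of definition. You anchor $\Delta(L_n)$ to a single base pair $(X_0,\alpha_0)$, defining it as the $\mon^2(L_n)$-orbit of $\alpha_0(\Delta(X_0))$. For a very general (non-projective) choice of $X_0$ one has $H^{1,1}(X_0,\Z)=0$ and hence $\Delta(X_0)=\emptyset$, so your set is empty and property (i) fails for every projective member of the component. The deformation argument you propose to rescue this --- connecting $(X,\alpha)$ to $(X_0,\alpha_0)$ by a path along which $\alpha(D)$ stays of type $(1,1)$ --- cannot work for a \emph{fixed} target: a class $v\in L_n$ is of type $(1,1)$ on $(X_0,\alpha_0)$ if and only if $P(X_0,\alpha_0)\in\Omega_{v^\perp}$, and the fixed period point $P(X_0,\alpha_0)$ simply does not lie on most of these hyperplanes. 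Surjectivity of $P_0$ restricted to $P_0^{-1}(\Omega_{v^\perp})$ lets you reach some manifold with period on $v^\perp$, not the prescribed $(X_0,\alpha_0)$. Moreover, keeping the class algebraic along the path is an unnecessary constraint you have imposed on yourself: Theorem \ref{thm:mbmdef} only requires the class to be of type $(1,1)$ at the two endpoints of the deformation. The paper avoids all of this by defining $\Delta(L_n)$ as the union of $\alpha(\Delta(X))$ over \emph{all} $(X,\alpha)\in\frakM_{L_n}^0$; then one inclusion in (i) is definitional, and the other is a one-line application of Theorem \ref{thm:mbmdef} to the parallel transport operator $\alpha^{-1}\circ\beta$, with no connectedness or algebraicity-along-the-path issues to check.

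There is a second, independent gap in your argument for (ii): you assert that $\Delta(X_0)$ is finite. It is not in general --- for manifolds of large Picard rank the set of wall divisors is typically infinite (already for $n=2$ there can be infinitely many $-2$-classes in $H^{1,1}(X,\Z)$). What is true, and what the paper invokes, is the boundedness theorem of Bayer--Hassett--Tschinkel (extended by Amerik--Verbitsky to the non-projective case): there is a constant $C_n$ with $|(D,D)|<C_n$ for every wall divisor $D$ on every manifold of \kn-type. Only with this input do Lemma \ref{lem:orbits} and the finite index of $\mon^2(L_n)$ in $O(L_n)$ yield finitely many orbits. Your appeal to Lemma \ref{lem:orbits} is the right instinct, but without the uniform bound on $(D,D)$ the finiteness of the set of square-lengths occurring in $\Delta(L_n)$ is unjustified.
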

\begin{proof} Let
\[\Delta(L_n):=\{\alpha(D):(X,\alpha)\in\frakM^0_{L_n}\text{ and } D\in\Delta(X)\}\subset L_n.\]
Let $(X,\alpha)\in\frakM_{L_n}^0$ and assume that $D=\alpha^{-1}(\beta(D'))\in H^{1,1}(X,\Z)$ for some marked manifold $(Y,\beta)\in\frakM_{L_n}^0$ and some wall divisor $D'\in\Delta(Y)$. Then $\alpha^{-1}\circ\beta$ is a parallel transport operator
and we have $D\in\Delta(X)$ by Theorem \ref{thm:mbmdef}. The other inclusion follows from the definition of $\Delta(L_n)$. This shows (i).

The group $\mon^2(L_n)$ clearly acts on $\Delta(L_n)$. The finiteness of orbits
is shown in \cite[Cor. 6.7]{amerikverbitskyrational}: by a result of Bayer--Hassett--Tschinkel \cite[Prop. 2]{bayerhassetttschinkel} there exists a constant $C_n>0$ such that any wall divisor $D\in\Delta(X)$ on a projective manifold $X$ of \kn-type satisfies $|(D,D)|<C_n$. In \cite{amerikverbitskyrational}, the authors extend this result to non-projective manifolds. Hence the claim follows from Lemma \ref{lem:orbits} and the fact that $\mon^2(L_n)\subset O(L_n)$ is a finite index subgroup.
\end{proof}

\begin{exm} By Proposition \ref{prop:wallk2} we have
\[\Delta(L_2)=\{\dd\in L_2:(\dd,\dd)=-2,\text{ or }(\dd,\dd)=-10,\ (\delta,L_2)=2\Z\}.\]
\end{exm}

Assume that $i:X\to X$ is a non-symplectic involution with $\calK_X^i\subsetneq\widetilde{\calK}_X^{i}$. Since $\widetilde{\calK}_X^{i}$ is connected, Theorem \ref{thm:ktchambers} implies that there exists a wall divisor $D\in\Delta(X)$ such that $\widetilde{\calK}_X^{i}\cap D^\perp\neq\emptyset$ and in particular $\calC_X^{i}\cap D^\perp\neq\emptyset$. By definition of the stable invariant Kähler cone, we have in fact $D\in\Delta(X)\setminus\Delta^i(X)$.
This, together with Lemma \ref{lem:delta}, motivates the following definition.

\begin{dfn}
We denote by $\Dp\subset L_n$ the set of elements $\dd\in L_n$ such that
\begin{align*}
\sig(M\cap\dd^\perp)&=(1,\rk(M)-2),\\
\sig(\Mp\cap\dd^\perp)&=(2,\rk(\Mp)-3).
\end{align*}
\end{dfn}

\begin{dfn} The \emph{positive cone} of $M$ is given by
\[\widetilde{\calC}_M:=\{x\in M_\R:(x,x)>0\}.\]
\end{dfn}

\begin{lem} \label{lem:delta}
For $\dd\in L_n$ the following properties are equivalent:
\begin{enumerate}
\item $\dd\in\Dp$

\item $\dd$ satisfies the following conditions:

\begin{enumerate}
\item $\dd\not\in M$,
\item $\dd\not\in\Mp$,
\item $\Omega_{\Mp}\cap\dd^\perp\neq\emptyset$,
\item $\widetilde{\calC}_M\cap\dd^\perp\neq\emptyset$.
\end{enumerate}

\item Let $\dd_M\in M_\Q$ and $\dd_\Mp\in M_\Q^\perp$ such that $\dd=\dd_M+\dd_\Mp$. Then
\[(\dd_M,\dd_M)<0,\qquad(\dd_\Mp,\dd_\Mp)<0.\]
\end{enumerate}
\end{lem}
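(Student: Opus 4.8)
The plan is to prove the two equivalences (i)$\Leftrightarrow$(iii) and (ii)$\Leftrightarrow$(iii) separately, using (iii) as a common hub. Throughout I work with the rational orthogonal decomposition $(\Ln)_\Q=M_\Q\oplus\Mp_\Q$ and the given splitting $\delta=\delta_M+\delta_\Mp$. The two basic reductions are
\[
M\cap\delta^\perp=M\cap\delta_M^\perp,\qquad \Mp\cap\delta^\perp=\Mp\cap\delta_\Mp^\perp,
\]
where the complements on the right are taken inside $M$, respectively $\Mp$; these hold because $(m,\delta)=(m,\delta_M)$ for $m\in M$ and $(k,\delta)=(k,\delta_\Mp)$ for $k\in\Mp$. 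Since $M$ is primitive (it is the invariant lattice of $\iota_M$) and $\Mp$ is its orthogonal complement, one has $\delta\in M\Leftrightarrow\delta_\Mp=0$ and $\delta\in\Mp\Leftrightarrow\delta_M=0$, which already identifies conditions (ii)(a) and (ii)(b) with $\delta_\Mp\neq0$ and $\delta_M\neq0$.

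The engine is the elementary signature trichotomy: for a nonzero vector $v$ in a real quadratic space of signature $(p,q)$, the complement $v^\perp$ has signature $(p-1,q)$, $(p,q-1)$, or $(p-1,q-1)$ with a one-dimensional isotropic radical, according as $(v,v)$ is positive, negative, or zero. Applying this inside $M_\R$, of signature $(1,\rk(M)-1)$, with $v=\delta_M$, the value $\sig(M\cap\delta^\perp)=(1,\rk(M)-2)$ occurs precisely when $(\delta_M,\delta_M)<0$. Applying it inside $\Mp_\R$, of signature $(2,\rk(\Mp)-2)$, with $v=\delta_\Mp$, the value $\sig(\Mp\cap\delta^\perp)=(2,\rk(\Mp)-3)$ occurs precisely when $(\delta_\Mp,\delta_\Mp)<0$. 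This establishes (i)$\Leftrightarrow$(iii). A point to record here is that the integral lattice $M\cap\delta^\perp$ has real span $M_\R\cap\delta_M^\perp$: as $\delta_M\in M_\Q$, the defining equation $(m,\delta_M)=0$ is rational, so the integral solutions span the rational and hence the real solution space, and the lattice inherits the signature of that real space (and likewise for $\Mp$).

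For (ii)$\Leftrightarrow$(iii) I translate the two geometric conditions into the same sign conditions via the trichotomy. Condition (ii)(d) requires $\widetilde{\calC}_M\cap\delta^\perp=\{x\in M_\R\cap\delta_M^\perp:(x,x)>0\}$ to be nonempty; since $M_\R$ has a single positive direction, this holds exactly when $M_\R\cap\delta_M^\perp$ retains a positive direction, i.e.\ when $\delta_M=0$ or $(\delta_M,\delta_M)<0$, and combining with (ii)(b) gives $(\delta_M,\delta_M)<0$. Condition (ii)(c) holds iff there is $\eta=x+iy\in\Omega_\Mp$ with $(\eta,\delta_\Mp)=0$; such an $\eta$ is the same datum as a positive-definite real $2$-plane $\langle x,y\rangle\subset\Mp_\R$ contained in $\delta_\Mp^\perp$, and conversely any positive-definite $2$-plane produces such an isotropic line $\eta=x+iy$ with $(x,x)=(y,y)>0$ and $(x,y)=0$. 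Hence (ii)(c) holds iff $\Mp_\R\cap\delta_\Mp^\perp$ still contains two positive directions, i.e.\ $\delta_\Mp=0$ or $(\delta_\Mp,\delta_\Mp)<0$, and combining with (ii)(a) gives $(\delta_\Mp,\delta_\Mp)<0$. Conjoining the four conditions yields exactly (iii).

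The main obstacle is the careful analysis of condition (ii)(c): one must check both directions of the correspondence between a period in $\Omega_\Mp\cap\delta^\perp$ and a positive-definite $2$-plane inside $\delta_\Mp^\perp$, and in particular exclude the degenerate boundary case $(\delta_\Mp,\delta_\Mp)=0$, where an isotropic radical appears but no second positive direction survives, so no admissible $\eta$ exists. The parallel degenerate cases for (i) and (ii)(d) are handled uniformly by the trichotomy, and the only remaining subtlety is the rational-to-real spanning statement noted above, which ensures the integral lattices $M\cap\delta^\perp$ and $\Mp\cap\delta^\perp$ genuinely carry the signatures of the corresponding real complements.
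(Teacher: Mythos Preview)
Your argument is correct and is essentially the same signature-counting argument as the paper's, just organized with (iii) rather than (i) as the hub and with the orthogonal-complement trichotomy stated explicitly. Your treatment is in fact a bit more careful about the degenerate cases $(\dd_M,\dd_M)=0$ and $(\dd_\Mp,\dd_\Mp)=0$ and about why the integral lattices $M\cap\dd^\perp$ and $\Mp\cap\dd^\perp$ inherit the signatures of the corresponding real complements.
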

\begin{proof}
First assume that $\dd\in\Dp$. 
Then (a) and (b) follow from $M\cap\dd^\perp\neq 0$ and $\Mp\cap\dd^\perp\neq 0$.
Since $M\cap\dd^\perp$ is hyperbolic, we have 
\[\widetilde{\calC}_M\cap\dd^\perp=\widetilde{\calC}_{M\cap\,\dd^\perp}\neq\emptyset,\]
and since $\Mp\cap\dd^\perp$ has two positive squares, we have 
\[\Omega_{\Mp}\cap\dd^\perp=\Omega_{\Mp\cap\,\dd^\perp}\neq\emptyset.\]
Conversely, assume that $\dd\in L_n$ satisfies (a)--(d). 
The sublattice $M\cap \dd^\perp\subset M$ is hyperbolic, parabolic or negative definite. 
The latter two cases are excluded by condition (d). 
Since $\dd\not\in\Mp$, this shows that $\sig(M\cap\dd^\perp)=(1,\rk(M)-2)$. 
Condition (c) implies that $\Mp\cap\dd^\perp$ has two positive squares and together with (a) we obtain $\dd\in\Dp$.

We now show the equivalence of (i) and (iii). 
If $\dd$ satisfies (iii), then the orthogonal decompositions
\begin{equation} \label{eq:orth}
\begin{aligned}
M_\Q&=(M_\Q\cap\dd^\perp)\oplus\Q\,\dd_M,\\
M^\perp_\Q&=(M^\perp_\Q\cap\dd^\perp)\oplus\Q\,\dd^\perp_M
\end{aligned}
\end{equation}
imply that $\dd\in\Dp$.
Conversely, if $\dd\in\Dp$, the lattices $M\cap\dd^\perp$ and $\Mp\cap\dd^\perp$ are non-degenerate,
and therefore we have $(\dd_M,\dd_M)\neq 0$ and $(\dd_\Mp,\dd_\Mp)\neq 0$. 
Thus, the decompositions (\ref{eq:orth}) hold and hence we have $(\dd_M,\dd_M)<0$
and $(\dd_\Mp,\dd_\Mp)<0$.
\end{proof}

\noindent For any sublattice $N\subset L_n$ let 
\[\Delta(N):=\Delta(L_n)\cap N.\]
Moreover, let \[\Dl:=\Dp\cap\Delta(L_n).\]

\begin{lem} \label{lem:discrloc}
The collections of hyperplanes 
\[\{\dd^\perp\subset\Omega_\Mp:\dd\in\Dl\}\] 
and 
\[\{\dd^\perp\subset\Omega_\Mp:\dd\in\Delta(\Mp)\}\] 
are locally finite in $\Omega_\Mp$.
\end{lem}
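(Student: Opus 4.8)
The plan is to reduce both statements to a single fact: in $\Omega_\Mp$, the hyperplanes $v^\perp$ attached to vectors $v$ in the dual lattice $(\Mp)^*$ of bounded negative square form a locally finite family. Both collections are governed by such vectors. For $\dd\in\Delta(\Mp)=\Delta(L_n)\cap\Mp$ the relevant vector is $\dd$ itself, which lies in $\Mp\subset(\Mp)^*$. For $\dd\in\Dl=\Dp\cap\Delta(L_n)$ I would write $\dd=\dd_M+\dd_\Mp$ as in Lemma \ref{lem:delta}; since $\dd_M\in M_\Q$ is orthogonal to every $\eta\in\Omega_\Mp\subset(\Mp)_\C$, one has $(\eta,\dd)=(\eta,\dd_\Mp)$, so $\dd^\perp\cap\Omega_\Mp=\dd_\Mp^\perp\cap\Omega_\Mp$, and $\dd_\Mp\in(\Mp)^*$ because $(\dd_\Mp,w)=(\dd,w)\in\Z$ for all $w\in\Mp$. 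Finally, since $\mon^2(L_n)$ preserves the bilinear form and acts on $\Delta(L_n)$ with finitely many orbits by Proposition \ref{prop:newample}, the square $(\dd,\dd)$ takes only finitely many values, so there is a constant $C_n>0$ with $|(\dd,\dd)|<C_n$ for every $\dd\in\Delta(L_n)$; together with Lemma \ref{lem:delta} (iii) this gives $|(v,v)|<C_n$ and $(v,v)<0$ for the relevant vector $v$ in each case.

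The key tool is the majorant attached to a period point. For $[\eta]\in\Omega_\Mp$ the real plane $P_\eta:=\langle\Re\eta,\Im\eta\rangle\subset\Mp_\R$ is positive definite of dimension two (this is exactly the content of the two defining conditions $(\eta,\eta)=0$, $(\eta,\bar\eta)>0$), and $P_\eta$ depends only on $[\eta]$. Decomposing $v=v_{P_\eta}+v'$ orthogonally with respect to $\Mp_\R=P_\eta\oplus P_\eta^\perp$, the form $q_\eta(v):=(v_{P_\eta},v_{P_\eta})-(v',v')$ is positive definite and varies continuously with $[\eta]$. I would then fix a point $p\in\Omega_\Mp$ with a compact neighborhood $K\ni p$ and a base point $\eta_0$; by continuity of $[\eta]\mapsto q_\eta$ and compactness of $K$ there is a constant $c>0$ with $q_\eta(v)\ge c\,q_{\eta_0}(v)$ for all $[\eta]\in K$ and all $v\in\Mp_\R$.

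To conclude, I would observe that a hyperplane $v^\perp$ meets $K$ only if there is some $[\eta]\in K$ with $v\perp P_\eta$, that is $v_{P_\eta}=0$; for such $\eta$ one has $q_\eta(v)=-(v,v)=|(v,v)|<C_n$, whence $q_{\eta_0}(v)<C_n/c$. As $q_{\eta_0}$ is positive definite and $(\Mp)^*$ is a lattice (hence discrete), only finitely many $v\in(\Mp)^*$ satisfy this bound, so only finitely many hyperplanes of either family meet $K$. Since every point of $\Omega_\Mp$ admits such a compact neighborhood, local finiteness follows. The one genuinely delicate point is the uniform comparability of the majorants on $K$, i.e.\ the existence of the constant $c$; everything else is bookkeeping with the orthogonal projection to $\Mp$ and the square bound $C_n$ from Proposition \ref{prop:newample}.
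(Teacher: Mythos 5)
Your argument is correct, but it takes a genuinely different route from the paper's. The paper also begins with the boundedness of $(\dd,\dd)$ coming from Proposition \ref{prop:newample}(ii) and the reduction of a class $\dd\in\Dl$ to a vector orthogonal to $M$ (it uses $\dd_\Mp:=\dd-\iota_M(\dd)\in\Mp$ rather than your orthogonal projection into $(\Mp)^*$; the two differ by a factor of $2$ and are interchangeable since both lattices are discrete). From there the paths diverge: the paper invokes Lemma \ref{lem:orbits} and the finite index of $\gmp$ in $O(\Mp)$ to conclude that the relevant vectors fall into finitely many $\gmp$-orbits, and then uses the proper discontinuity of the $\gmp$-action on $\Omega_\Mp$ to deduce that each orbit of hyperplanes is closed and locally finite. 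You instead give the classical majorant argument: the positive $2$-plane $P_\eta$ attached to a period point yields a positive definite form $q_\eta$, a hyperplane $v^\perp$ can only pass through $[\eta]$ if $v\perp P_\eta$, in which case $q_\eta(v)=-(v,v)$ is bounded by $C_n$, and uniform comparability of the majorants over a compact neighbourhood reduces everything to counting vectors of bounded norm in a definite lattice. Your version is more self-contained — it needs neither Kneser's finiteness of orbits nor the arithmetic group action at all, only the square bound and discreteness of $(\Mp)^*$ — at the cost of setting up the majorant and the compactness argument for the comparison constant $c$, which you correctly identify as the one delicate step and which does go through. Both proofs are complete; yours generalizes more readily to any family of hyperplanes $v^\perp$ with $v$ ranging over dual-lattice vectors of bounded negative square, independent of any group acting with finitely many orbits.
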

\begin{proof}
Since $\mon^2(L_n)$ acts on $\Delta(L_n)$, the group $\Gamma_\Mp$ acts on $\Delta(\Mp)$.
There are only finitely many possible values $(\dd,\dd)$ for $\dd\in\Delta(\Mp)\subset\Delta(L_n)$ by Proposition \ref{prop:newample} (ii), and since $\Gamma_\Mp\subset O(\Mp)$ is a finite index subgroup, Lemma \ref{lem:orbits} implies that $\Delta(\Mp)$ consists of finitely many $\gmp$-orbits.
The group $\gmp$ acts properly discontinuously on $\Omega_\Mp$, which means that the map
\begin{align*}
\Omega_\Mp\times\gmp\to&\ \Omega_\Mp\times\Omega_\Mp\\
(\eta,\sigma)\mapsto&\ (\eta,\sigma(\eta))
\end{align*}
is proper. In particular, every orbit $\gmp\cdot\dd^\perp\subset\Omega_\Mp$ is closed and hence a locally finite union of hyperplanes. 
This shows the first claim.

Now let $\dd\in\Dl$. We can write $2\dd=\dd_M+\dd_\Mp$, where 
\[\dd_M:=\dd+\iota_M(\dd)\in M,\qquad\dd_\Mp:=\dd-\iota_M(\dd)\in\Mp.\]
By Lemma \ref{lem:delta}, we have $(\dd_M,\dd_M)<0$ and $(\dd_\Mp,\dd_\Mp)<0$. 
Again, there is only a finite number of possible values for $(\dd,\dd)<0$, and since
\[4(\dd,\dd)=(\dd_M,\dd_M)+(\dd_\Mp,\dd_\Mp)\]
the same is true for $(\dd_\Mp,\dd_\Mp)$.
The group $\gmp$ acts on the set of such $\dd_\Mp$, and since $\dd^\perp=\dd_\Mp^\perp\subset\Omega_\Mp$, the same argument as above applies.
\end{proof}

By the preceding lemma, the subsets

\[\widetilde{\calD}_M:=\bigcup_{\dd\in\Delta(M^\perp)}\dd^\perp\subset\Omega_\Mp\]
and
\[\widetilde{\calD}'_M:=\bigcup_{\dd\in\Dl}\dd^\perp\subset\Omega_\Mp\]
are closed in $\Omega_\Mp$. They are invariant under $\gmp$ and their quotients
\[
\calD_M:=\widetilde{\calD}_M/\gmp\quad\text{and}\quad
\calD'_M:=\widetilde{\calD}'_M/\gmp
\]
are Zariski-closed subsets of $\Omega_\Mp/\gmp$. The significance of these divisors is explained by the following Proposition.

\begin{prop} \label{prop:stableperiod}
Let $(X,i)$ be a pair of type $M$.
\begin{enumerate}
\item $P_M(X,i)\not\in\calD_M$.
\item If $P_M(X,i)\not\in\calD'_M$, then we have $\sik=\calK_X^i$.
\end{enumerate}
\end{prop}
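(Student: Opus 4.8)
The plan is to fix an $M$-admissible marking $\alpha:H^2(X,\Z)\to\Ln$, so that $\alpha\circ i^*=\iota_M\circ\alpha$, $\alpha(H^2(X,\Z)^i)=M$, and $\iota_M$ acts as $+\id$ on $M$ and as $-\id$ on $\Mp$ (the latter because the $+1$-eigenspace of $\iota_M$ inside $\Mp$ is $M\cap\Mp=0$). Writing $\eta:=P_0(X,\alpha)=[\alpha(\omega)]\in\Omega_\Mp$, I first record two facts used throughout. First, for an integral class $\delta\in\Ln$ one has $\eta\in\delta^\perp$ if and only if $\alpha^{-1}(\delta)\in H^{1,1}(X,\Z)=\ns(X)$; this is immediate from $(\alpha(\omega),\delta)=(\omega,\alpha^{-1}(\delta))$, the reality of $\delta$, and \eqref{eq:ns}. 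Second, and crucially, there is \emph{no} wall divisor $D\in\Delta(X)$ with $i^*D=-D$: if $i^*D=-D$, then every invariant class $x$ satisfies $(x,D)=(i^*x,i^*D)=-(x,D)=0$, whence $\calC_X^i\subset D^\perp$; but the nonempty cone $\calK_X^i\subset\kx$ (Proposition \ref{prop:invol}) would then lie on the wall $D^\perp$, contradicting the fact (Theorem \ref{thm:ktchambers}) that $\kx$ is a Kähler-type chamber, hence an open set disjoint from all such walls.

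For (i), since $\widetilde{\calD}_M$ is $\gmp$-invariant it suffices to show $\eta\notin\delta^\perp$ for every $\delta\in\Delta(\Mp)=\Delta(\Ln)\cap\Mp$. If $\eta\in\delta^\perp$ for such a $\delta$, the first fact gives $D:=\alpha^{-1}(\delta)\in H^{1,1}(X,\Z)$, and Proposition \ref{prop:newample}(i) upgrades this to $D\in\Delta(X)$. But $\delta\in\Mp$ forces $i^*D=\alpha^{-1}(\iota_M\delta)=-D$, contradicting the second fact. Hence $\eta\notin\widetilde{\calD}_M$ and $P_M(X,i)\notin\calD_M$.

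For (ii), I argue by contraposition: assuming $\sik\neq\calK_X^i$, I will produce $\delta\in\Dl$ with $\eta\in\delta^\perp$, so that $P_M(X,i)\in\calD_M'$. Recall that $\calK_X^i\subsetneq\sik$ forces a wall divisor $D\in\Delta(X)\setminus\Delta^i(X)$ with $\calC_X^i\cap D^\perp\neq\emptyset$ (a non-invariant wall $D^\perp$ must cut through the interior of $\sik$). Set $\delta:=\alpha(D)\in\Delta(\Ln)$ and decompose $D=D^++D^-$ with $D^\pm:=\tfrac12(D\pm i^*D)$; then $D^+$ is invariant, $D^-$ is anti-invariant, and $\alpha(D^+),\alpha(D^-)$ are exactly the $M_\Q$- and $\Mp_\Q$-components of $\delta$. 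Here $D^-\neq0$ because $D\notin\Delta^i(X)$, and $D^+\neq0$ by the second fact. I claim $(D^+,D^+)<0$ and $(D^-,D^-)<0$, which by the equivalence of (i) and (iii) in Lemma \ref{lem:delta} gives $\delta\in\Dp$, hence $\delta\in\Dl$; and since $D\in H^{1,1}(X,\Z)$, the first fact yields $\eta\in\delta^\perp$, completing the contradiction.

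The two negativity statements are the technical heart, and the passage from the geometric condition to the signature conditions defining $\Dp$ is the step I expect to be the main obstacle, since one must separate the roles of the invariant part $M$ and the anti-invariant part $\Mp$. For $(D^+,D^+)<0$, choose $x\in\calC_X^i\cap D^\perp$; then $(x,x)>0$ and $(x,D^+)=(x,D)=0$, and since $H^2(X,\R)^i=M_\R$ is hyperbolic (Proposition \ref{prop:invol}(iii)) the orthogonal complement of the positive vector $x$ in $M_\R$ is negative definite, forcing $(D^+,D^+)<0$ as $D^+\neq0$. For $(D^-,D^-)<0$, note $(\omega,D^+)=0$ by Proposition \ref{prop:invol}(i), so $(\omega,D^-)=(\omega,D)=0$; the real and imaginary parts of $\omega$ span a positive-definite $2$-plane $W$ in the anti-invariant real space $(H^2(X,\Z)^i)^\perp_\R\cong\Mp_\R$, and because $\sig(\Mp)=(2,\rk(\Mp)-2)$ the complement $W^\perp$ there is negative definite, so $D^-\in W^\perp$ with $D^-\neq0$ gives $(D^-,D^-)<0$. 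The non-existence of anti-invariant wall divisors (the second fact) is exactly what is needed to guarantee $D^+\neq0$, without which $\delta$ would fail to lie in $\Dp$.
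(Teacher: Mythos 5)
Your proof is correct and follows the paper's argument closely. Part (i) is the paper's argument verbatim in substance: a wall divisor mapped into $\Mp$ would be orthogonal to the whole invariant lattice, hence to an invariant Kähler class, which is impossible since $\kx$ avoids all walls. For part (ii) you take the same contrapositive route --- produce $D\in\Delta(X)\setminus\Delta^i(X)$ whose wall meets the invariant positive cone, set $\dd=\alpha(D)$, and conclude via Lemma \ref{lem:delta} --- but you certify $\dd\in\Dp$ through criterion (iii) of that lemma (negativity of both eigencomponents, which you prove by hand from the hyperbolicity of $M$ and the signature $(2,\rk(\Mp)-2)$ of $\Mp$), whereas the paper checks the four conditions (a)--(d) of criterion (ii): $\dd\notin M$ because $D$ is not invariant, $\dd\notin\Mp$ by part (i), $\Omega_\Mp\cap\dd^\perp\neq\emptyset$ because the period point lies there, and $\widetilde{\calC}_M\cap\dd^\perp\neq\emptyset$ because $\sik\cap D^\perp\neq\emptyset$. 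Your signature computations are correct, but they essentially reprove the implication (ii)$\Rightarrow$(iii) inside Lemma \ref{lem:delta}; the paper's verification is shorter because criterion (ii) is tailored to exactly the data the geometric situation hands you.
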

\begin{proof}
Let $\alpha:H^2(X,\Z)\to L_n$ be an admissible marking.
Assume first that $P_M(X,\alpha)\in\dd^\perp$, where $\dd\in\Delta(\Mp)$. Then $D:=\alpha^{-1}(\dd)$ is a wall divisor on $X$ which is orthogonal to the invariant lattice. This is impossible, since there exists an invariant ample class on $X$, and since ample classes are not orthogonal to any wall divisor by Theorem \ref{thm:ktchambers}. This shows (i).

Now assume that $\calK_X^i$ is strictly smaller than $\sik$. This implies that there exists an element $D\in\Delta(X)\setminus\Delta^i(X)$ such that $D^\perp$ has non-empty intersection with $\sik\subset\calC_X^i$. In particular, the element $\dd:=\alpha(D)$ satisfies $\dd^\perp\cap\widetilde{\calC}_M\neq\emptyset$. Furthermore we have $\dd\not\in\Mp$ by part (i) and $\dd\not\in M$ by the assumption $D\not\in\Delta^i(X)$. Finally, $P(X,\alpha)\in\Omega_\Mp\cap\dd^\perp$ shows that this intersection is non-empty, and we can apply Lemma \ref{lem:delta} to obtain $\dd\in\Dp$. Since $D\in\Delta(X)$, we have $\dd\in\Dl$ and hence $P_M(X,i)\in\calD'_M$. This shows (ii).
\end{proof}

We remark, that the converse of part (ii) is not true in general. In fact, as seen in the proof, the property $P_M(X,i)\in\calD'_M$ only implies the existence of a wall divisor in $\Delta(X)\setminus\Delta(X)^i$ whose orthogonal complement meets the invariant positive cone, rather than the stable invariant Kähler cone. Once we have discussed the problem of deformation equivalence, we will define a refined period map and a divisor $\calD_\calK$, which allows us to give a necessary and sufficient condition for $\sik=\calK_X^i$ in terms of the period map.

%%%%%%%%%%%%%%%%%%%%%%%%%
%		KT CHAMBERS		%
%%%%%%%%%%%%%%%%%%%%%%%%%

\section{Kähler-type chambers of $M$}

In this section we discuss the lattice-theoretic counterpart of the stable invariant Kähler cone, the Kähler-type chambers of $M$. We will use these in the next section to give a lattice-theoretic criterion for deformation equivalence.

\begin{dfn}
A \emph{Kähler-type chamber} of the lattice $M$ is a connected component of
\[\widetilde{\calC}_M\setminus\bigcup_{\dd\in\Delta(M)}\dd^\perp,\]
where $\Delta(M)=\Delta(L_n)\cap M$.
We denote the set of Kähler-type chambers of $M$ by $\kt(M)$.
\end{dfn}

If $(X,i)$ is a pair of type $M$ and $\alpha:H^2(X,\Z)\to L_n$ an admissible marking, then we have $\alpha(\Delta^i(X))=\Delta(M)$, and hence $\alpha(\sik)$ is a Kähler-type chamber of $M$.

\begin{dfn}
The stable invariant Kähler cones of two pairs $(X,i)$ and $(Y,j)$ of type $M$ are called \emph{isometric} if there exists a parallel transport operator $g:H^2(X,\Z)\to H^2(Y,\Z)$ satisfying 
\[j^*\circ g=g\circ i^*,\quad g(\sik)=\widetilde{\calK}^j_Y.\] 
In this case we write $\widetilde{\calK}_X^i\cong\widetilde{\calK}_Y^j$.
\end{dfn}

Let $\Gamma_M$ be the image of the homomorphism $\Gamma(M)\to O(M)$. 

\begin{prop}
$\Gamma_M\subset O(M)$ is a finite index subgroup.
\end{prop}
\begin{proof}
We will show that $\Gamma_M$ contains the group
$\widetilde{O}^+(M)=\widetilde{O}(M)\cap O^+(M)$,
from which the claim will follow.
By Lemma \ref{lem:stableisometry}, any $\sigma\in \widetilde{O}^+(M)$ extends to an isometry $\widetilde{\sigma}\in\widetilde{O}(L_n)$ satisfying $\widetilde{\sigma}|_\Mp=\id_\Mp\in O^+(\Mp)$ and hence $\widetilde{\sigma}\in O^+(L_n)$.
Using Lemma \ref{lem:monodromy}, we have $\widetilde{\sigma}\in\mon^2(L_n)$, which shows $\widetilde{\sigma}\in\Gamma(M)$ and consequently $\sigma\in\Gamma_M$.
\end{proof}

The group $\Gamma_M$ acts on $\Delta(M)$ and therefore on the Kähler-type chambers of $M$. We clearly have $\widetilde{\calK}_X^i\cong\widetilde{\calK}_Y^j$ if and only if 
\[[\alpha(\widetilde{\calK}_X^i)]=[\beta(\widetilde{\calK}_Y^j)]\in\kt(M)/\Gamma_M\]
for any and hence for all admissible markings $\alpha$ and $\beta$. In particular, we obtain a well-defined map
\begin{align*}
\rho:\ \calM_M\to&\ \kt(M)/\Gamma_M\\
(X,i)\mapsto&\ [\alpha(\widetilde{\calK}_X^i)].
\end{align*}

We will later show that the map $\rho$ is surjective. For this, we will need the following lemma.

\begin{lem} \label{lem:ktopen}
Any Kähler-type chamber of $M$ is an open subset of $\widetilde{\calC}_M$.
\end{lem}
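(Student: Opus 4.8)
The plan is to reduce the statement to a local finiteness property of the wall arrangement and then conclude by a purely topological argument. Concretely, I would show that the family of hyperplanes $\{\dd^\perp : \dd\in\Delta(M)\}$ is locally finite inside $\widetilde{\calC}_M$. Granting this, the union $\bigcup_{\dd\in\Delta(M)}\dd^\perp$ is closed in $\widetilde{\calC}_M$, so its complement is open; and since $\widetilde{\calC}_M$ is an open subset of $M_\R$ and hence locally connected, every connected component of this open complement is itself open. As a Kähler-type chamber of $M$ is by definition such a connected component, this gives the claim. The argument for local finiteness will closely parallel the proof of Lemma \ref{lem:discrloc}.

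For the finiteness inputs, I would first note that $\Delta(M)=\Delta(L_n)\cap M\subset\Delta(L_n)$, so by Proposition \ref{prop:newample} (ii) the set of values $\{(\dd,\dd):\dd\in\Delta(M)\}$ is finite, and in particular bounded. Together with Lemma \ref{lem:orbits} and the fact, just established, that $\Gamma_M\subset O(M)$ is of finite index, this shows that $\Delta(M)$ consists of finitely many $\Gamma_M$-orbits. To establish local finiteness directly, I would fix $x_0\in\widetilde{\calC}_M$ and use that, since $M$ is hyperbolic and $(x_0,x_0)>0$, the orthogonal complement $x_0^\perp\subset M_\R$ is negative definite. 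Writing a candidate wall as $\dd=a\,x_0+\dd'$ with $\dd'\in x_0^\perp$, one has $(\dd,\dd)=a^2(x_0,x_0)-\lVert\dd'\rVert^2$, where $\lVert\cdot\rVert$ is the norm induced by $-(\cdot,\cdot)$ on $x_0^\perp$. If $\dd^\perp$ meets a sufficiently small neighborhood of $x_0$, a short Cauchy--Schwarz estimate in $x_0^\perp$ forces $|a|$ to be small, and since $(\dd,\dd)$ is bounded this also bounds $\lVert\dd'\rVert$. Hence every such $\dd$ lies in a fixed bounded region of $M_\R$, which contains only finitely many lattice vectors.

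The only genuine obstacle is this local finiteness of the walls; the passage from local finiteness to openness of the chambers is entirely formal. I should remark that, exactly as in Lemma \ref{lem:discrloc}, one may alternatively deduce local finiteness from the proper discontinuity of the $\Gamma_M$-action on the positive cone of the hyperbolic lattice $M$: each orbit $\Gamma_M\cdot\dd^\perp$ is then a locally finite union of hyperplanes, and the finiteness of the number of orbits established above yields local finiteness of the full arrangement. Either route completes the proof.
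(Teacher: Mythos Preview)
Your proposal is correct. Both routes you outline work, and your second alternative---deducing local finiteness from the proper discontinuity of the $\Gamma_M$-action on the positive cone, together with the finiteness of $\Gamma_M$-orbits in $\Delta(M)$---is precisely the argument the paper gives. The paper makes this explicit by passing to the hyperboloid model $\mathbb{H}=\{x\in\calC_M:(x,x)=1\}$, identifying it with $O^+(M_\R)/O(x^\perp)$, and invoking a general lemma (from Wolf) to the effect that a discrete subgroup of a Lie group acts properly discontinuously on a quotient by a compact subgroup.

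Your primary route is genuinely different and more self-contained: rather than appealing to proper discontinuity, you localize at $x_0\in\widetilde{\calC}_M$, use the negative definiteness of $x_0^\perp$ and the uniform bound on $(\dd,\dd)$ to trap any $\dd$ whose wall meets a small neighborhood of $x_0$ inside a bounded region of $M_\R$, hence inside a finite subset of the lattice $M$. This avoids any reference to the $\Gamma_M$-action or to homogeneous-space machinery, at the cost of a short but explicit estimate. The paper's approach, by contrast, packages the geometry into the proper discontinuity statement and thereby parallels the proof of Lemma~\ref{lem:discrloc} more closely; it is cleaner structurally but relies on an external reference. Either argument suffices, and your direct estimate is arguably the more elementary of the two.
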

\begin{proof}
Let $\calC_M\subset\widetilde{\calC}_M$ be one of the two connected components, and let $\Gamma_M^+\subset\Gamma_M$ and $O^+(M_\R)\subset O(M_\R)$ be the subgroups preserving $\calC_M$. The group $O^+(M_\R)$ acts transitively on
\[\mathbb{H}:=\{x\in\calC_M:(x,x)=1\}\]
and the stabilizer of $x\in\mathbb{H}$ is the compact group $O(x^\perp)$. By \cite[Lemma 3.1.1]{wolf}, the action of the discrete subgroup $\Gamma_M^+\subset O^+(M_\R)$ on 
\[\mathbb{H}\cong O^+(M_\R)/O(x^\perp)\] 
is properly discontinuous.
This implies that for $\dd\in\Delta(M)$, the $\Gamma_M^+$-orbit of the closed subset $\dd^\perp\subset\mathbb{H}$ is closed. Since $\Gamma_M^+\subset O(M)$ is a finite index subgroup, there is only a finite number of orbits.
\end{proof}

%%%%%%%%%%%%%%%%%%%%%%%%%%%%%%%%%%%%%
%		DEFORMATION EQUIVALENCE		%
%%%%%%%%%%%%%%%%%%%%%%%%%%%%%%%%%%%%%

\section{Deformation equivalence} \label{sec:defeq}

The goal of this section is to show that two pairs $(X,i)$ and $(Y,j)$ of type $M$ are deformation equivalent if and only if their stable invariant Kähler cones are isometric. Moreover, we show that every Kähler-type chamber of $M$ can be realized as the stable invariant Kähler cone of some pair $(X,i)$, and thus obtain a purely lattice-theoretic characterization of the deformation types.

\bigskip

Let $\calK\in\kt(M)$ be a Kähler-type chamber. 
As a consequence of Lemma \ref{lem:ktopen}, there exists an integral class $h\in\calK$. 
By \cite[\S 4]{markmansurvey}, the component $\frakM_{L_n}^0$ and the connected component of $\widetilde{\calC}_M$ which contains $h$ (and therefore $\calK$) determine a connected component
$\Omega_{h^\perp}^+\subset\Omega_{h^\perp}$, such that for every 
\[(X,\alpha)\in\frakM^+_{h^\perp}:=P_0^{-1}(\Omega^+_{h^\perp})\]
we have $\alpha^{-1}(h)\in\cx$. 
Let $\frakM_{h^\perp}^a\subset\frakM^+_{h^\perp}$ be the set of marked pairs $(X,\alpha)$ such that $\alpha^{-1}(h)$ is ample.

\begin{lem} \label{lem:ampleopen}
$\frakM_{h^\perp}^a\subset\frakM^+_{h^\perp}$ is open.
\end{lem}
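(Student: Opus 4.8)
The plan is to show that the ampleness condition is open by transporting the question to the period domain, where openness of the Kähler cone (as a function of the period point) is encoded in the structure of Kähler-type chambers. Concretely, fix $(X_0,\alpha_0)\in\frakM_{h^\perp}^a$, so that $\alpha_0^{-1}(h)$ is an ample class on $X_0$. Since $h$ lies in the fixed Kähler-type chamber $\calK$ and ampleness is equivalent to membership in the Kähler cone $\kx$, the class $\alpha_0^{-1}(h)$ lies in $\calK_{X_0}$. I want to produce an open neighborhood $U\subset\frakM_{h^\perp}^+$ of $(X_0,\alpha_0)$ on which $\alpha^{-1}(h)$ remains a Kähler (equivalently, ample, by Proposition \ref{prop:invol} (iv)) class.

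First I would use the Local Torelli theorem, which identifies a neighborhood of $(X_0,\alpha_0)$ in $\frakM_{L_n}^0$ with an open subset of $\Omega_L$ via the period map $P_0$. Under this identification, the condition $\alpha^{-1}(h)\in H^{1,1}(X,\R)$ is automatic on $\frakM_{h^\perp}^+$, since the period point lies in $\Omega_{h^\perp}^+$ and $h$ is therefore orthogonal to the period, hence of type $(1,1)$. What remains is to control membership in the Kähler cone. Here I would invoke the description of Kähler-type chambers from Theorem \ref{thm:ktchambers}: the Kähler cone $\kx$ is one of the connected components of $\cx\setminus\bigcup_z z^\perp$, the union being over wall divisors $z\in\Delta(X)$. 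The class $\alpha_0^{-1}(h)$ being ample means it avoids every wall $z^\perp$ for $z\in\Delta(X_0)$.

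The key step is to show that this avoidance persists under small deformation. For this I would use Proposition \ref{prop:newample}: the wall divisors on nearby fibers are governed by the fixed lattice-theoretic set $\Delta(L_n)$, and by local finiteness (a consequence of the finiteness of $\mon^2(L_n)$-orbits in part (ii) together with Lemma \ref{lem:orbits}), only finitely many hyperplanes $\dd^\perp$ with $\dd\in\Delta(L_n)$ pass near the period point $P_0(X_0,\alpha_0)$. Since $h$ is ample on $X_0$, it lies at positive distance from all these finitely many relevant walls; by continuity of the period map and of the Beauville--Bogomolov form, there is a neighborhood of $P_0(X_0,\alpha_0)$ in $\Omega_{h^\perp}^+$ on which $h$ stays in the same chamber of $\cx\setminus\bigcup_{\dd\in\Delta(L_n)}\dd^\perp$. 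By Proposition \ref{prop:newample} (i), a class $\dd\in\Delta(L_n)$ contributes an actual wall divisor on $X$ only when $\alpha^{-1}(\dd)$ is of type $(1,1)$; thus $h$ remaining on the correct side of every nearby wall forces $\alpha^{-1}(h)\in\kx$, i.e. $(X,\alpha)\in\frakM_{h^\perp}^a$.

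The main obstacle I anticipate is the bookkeeping needed to pass cleanly between the three pictures: the lattice-theoretic walls $\dd^\perp$ for $\dd\in\Delta(L_n)$ in the fixed lattice $L_n$, the geometric walls $z^\perp$ for $z\in\Delta(X)$ governing $\kx$ on each fiber, and the period-domain walls in $\Omega_L$. In particular one must be careful that a lattice wall $\dd^\perp$ which does \emph{not} meet the positive cone near $P_0(X_0,\alpha_0)$, or for which $\alpha^{-1}(\dd)$ fails to be of type $(1,1)$, simply does not bound the relevant chamber, so that only the finitely many genuinely nearby walls matter; this is exactly what local finiteness (in the spirit of Lemma \ref{lem:discrloc}, adapted to the full positive cone rather than to $\Omega_\Mp$) and the deformation-invariance statement of Theorem \ref{thm:mbmdef} are meant to guarantee. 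Once local finiteness near the period point is established, openness follows from the elementary fact that a point lying strictly inside a chamber cut out by finitely many hyperplanes has a neighborhood contained in that chamber.
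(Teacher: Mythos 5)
The paper itself does not argue this lemma; it simply cites \cite[Cor.~7.3]{markmansurvey}, so any self-contained argument is by definition a different route. Your route, however, has a genuine gap at its final step. Knowing that $\alpha^{-1}(h)$ stays in the positive cone and avoids every wall $D^\perp$, $D\in\Delta(X)$, for all $(X,\alpha)$ near $(X_0,\alpha_0)$ only tells you that $\alpha^{-1}(h)$ lies in \emph{some} Kähler-type chamber of $X$ (Theorem \ref{thm:ktchambers}); it does not tell you that this chamber is $\kx$ itself rather than $g(f^*\calK_{\widetilde{X}})$ for a nontrivial birational model $\widetilde{X}$ or monodromy operator $g$. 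Nothing in Theorem \ref{thm:mbmdef}, Theorem \ref{thm:ktchambers}, or Proposition \ref{prop:newample} identifies which chamber of the nearby fibre is the Kähler cone: these results control the walls, not the position of $\kx$ among the chambers they cut out. The sentence ``$h$ remaining on the correct side of every nearby wall forces $\alpha^{-1}(h)\in\kx$'' is therefore an assertion, not a deduction; in principle the Kähler chamber could ``jump'' to a different component as the period moves, and ruling this out is exactly the nontrivial content of the lemma.

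The missing ingredient is the openness of the Kähler (equivalently, ample) condition in families, which has to be imported from outside the wall-and-chamber formalism. Concretely: on $\frakM^+_{h^\perp}$ the class $h$ stays of type $(1,1)$, so near $(X_0,\alpha_0)$ the classes $\alpha^{-1}(h)$ are the first Chern classes of a line bundle on the restricted Kuranishi family, and since $\alpha_0^{-1}(h)$ is ample one concludes by openness of ampleness in proper families (or, in the Kähler language, by Kodaira--Spencer stability of Kähler classes: nearby fibres carry Kähler classes arbitrarily close to $\alpha_0^{-1}(h)$, which pins down the chamber containing $\kx$). Once this is granted, the lemma follows immediately and the entire discussion of $\Delta(L_n)$, local finiteness, and deformation of walls becomes unnecessary; conversely, without it your argument proves only the weaker statement that the locus where $\alpha^{-1}(h)$ lies in \emph{some} Kähler-type chamber is open.
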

\begin{proof}
\cite[Cor. 7.3]{markmansurvey}
\end{proof}

Let $\Omega_\Mp^+$ be the connected component of $\Omega_\Mp$ which is contained in $\Omega_{h^\perp}^+$, and let
\[\frakM^+_\Mp:=P_0^{-1}(\Omega^+_\Mp).\]

We have $\alpha^{-1}(\calK)\subset\cx$ for every $(X,\alpha)\in\frakM^+_\Mp$. Let
\[\mk:=\{(X,\alpha)\in\frakM^+_\Mp:\calK\cap\alpha(\kx)\neq\emptyset\}.\]

\begin{lem} \label{lem:stableopen} $\mk\subset\frakM^+_\Mp$ is an open subset.
\end{lem}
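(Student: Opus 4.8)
The plan is to show that $\mk$ is open by exhibiting it as a union of open sets indexed by the integral Kähler classes lying in $\calK$. The key observation is that the condition $\calK\cap\alpha(\kx)\neq\emptyset$ is equivalent to requiring that $\alpha^{-1}(h')$ be a Kähler class for \emph{some} integral $h'\in\calK$; since the Kähler cone is open and $\alpha^{-1}(\calK)\subset\cx$ already holds on all of $\frakM^+_\Mp$, the genuinely restrictive condition is that $\alpha(\kx)$ meet the chamber $\calK$, and by openness of $\kx$ this happens precisely when some rational, hence (after scaling) some integral, class of $\calK$ is Kähler.

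First I would use Lemma \ref{lem:ktopen}, which guarantees that $\calK$ is an open subset of $\widetilde{\calC}_M$, together with the density of rational points, to write
\[
\mk=\bigcup_{h'\in\calK\cap M}\frakM^a_{h'^\perp},
\]
where $\frakM^a_{h'^\perp}$ denotes the set of $(X,\alpha)\in\frakM^+_\Mp$ for which $\alpha^{-1}(h')$ is ample (equivalently Kähler, by Proposition \ref{prop:invol} (iv) the manifolds here are projective, and an invariant Kähler class on a projective manifold can be taken ample). Here one must be slightly careful: the sets $\frakM^a_{h'^\perp}$ were defined as subsets of $\frakM^+_{h'^\perp}$, so I would intersect with $\frakM^+_\Mp$, noting that $\frakM^+_\Mp\subset\frakM^+_{h'^\perp}$ for every $h'\in\calK$ by the choice of the component $\Omega^+_\Mp\subset\Omega^+_{h^\perp}$ and the fact that all classes of $\calK$ lie in the same component of $\widetilde{\calC}_M$.

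The inclusion $\supseteq$ is immediate, since $\alpha^{-1}(h')$ being ample forces $h'=\alpha(\alpha^{-1}(h'))\in\alpha(\kx)\cap\calK$. For the inclusion $\subseteq$, I would take $(X,\alpha)\in\mk$, pick a class $k\in\calK\cap\alpha(\kx)$, and use that $\alpha(\kx)$ is open (the image of the open Kähler cone under the isometry $\alpha$) and that $\calK$ is open to find a rational, and after clearing denominators an integral, class $h'\in\calK\cap\alpha(\kx)$; then $\alpha^{-1}(h')$ is Kähler, hence ample on the projective manifold $X$, so $(X,\alpha)\in\frakM^a_{h'^\perp}$. Finally, each $\frakM^a_{h'^\perp}$ is open in $\frakM^+_{h'^\perp}$ by Lemma \ref{lem:ampleopen}, hence open in $\frakM^+_\Mp$, so $\mk$ is a union of open sets and therefore open.

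The main obstacle I anticipate is the bookkeeping around the connected components: one must verify that restricting $h'$ to range over $\calK\cap M$ (rather than all of $M$) keeps every $\frakM^a_{h'^\perp}$ inside the \emph{fixed} component $\frakM^+_\Mp$, so that the orientation data chosen from the single class $h$ at the start of the section is compatible with every other integral class of the same chamber. This is exactly where the assumption that $h$ and $\calK$ lie in one connected component of $\widetilde{\calC}_M$ is used, and it is the only point requiring care beyond the elementary openness argument.
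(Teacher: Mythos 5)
Your proof is correct and follows essentially the same route as the paper: the paper likewise picks, for each $(X,\alpha)\in\mk$, an integral class $h'\in\calK\cap\alpha(\kx)$ (using openness of this intersection and density of rational points) and observes that $\frakM^a_{h'^\perp}\cap\frakM^+_\Mp$ is an open neighbourhood contained in $\mk$ by Lemma \ref{lem:ampleopen}. Your reformulation as the union $\bigcup_{h'}\frakM^a_{h'^\perp}\cap\frakM^+_\Mp$ and your explicit attention to the compatibility of the components $\Omega^+_{h'^\perp}$ are just a more carefully spelled-out version of the same argument.
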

\begin{proof}
Let $(X,\alpha)\in\mk$. Since $\alpha(\kx)\,\cap\,\calK\subset\calK$ is a non-empty open subset, there exists an integral element $h\in\calK$ such that $\alpha^{-1}(h)$ is ample.
Then 
\[\frakM_{h^\perp}^a\cap\frakM^+_\Mp\subset\mk\] is an open neighbourhood of $(X,\alpha)$ by Lemma \ref{lem:ampleopen}.
\end{proof}

\begin{prop} \label{prop:definv}
The isometry class of the stable invariant Kähler cone is invariant under deformation.
\end{prop}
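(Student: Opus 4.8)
The plan is to deduce the statement from the combinatorial reformulation already set up: by the characterization of isometric stable invariant Kähler cones, $\widetilde{\calK}^{i_1}_{X_1}\cong\widetilde{\calK}^{i_2}_{X_2}$ holds precisely when $\rho(X_1,i_1)=\rho(X_2,i_2)$ in $\kt(M)/\Gamma_M$, where $\rho(X,i)=[\alpha(\widetilde{\calK}^i_X)]$ for any admissible marking $\alpha$. Thus it suffices to show that $\rho$ is constant along a family. So let $(\pi,I)\colon\calX\to T$ be a family of involutions of type $M$ over a connected base $T$; since a deformation of a pair of type $M$ is again of type $M$, the map $t\mapsto\rho(X_t,I_t)$ is defined, and because $T$ is connected it is enough to prove that it is locally constant. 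Fixing $t_0$ and a simply connected neighbourhood, I would trivialize the local system $R^2\pi_*\Z$ to obtain markings $\alpha_t$ with $\alpha_{t_0}$ admissible; admissibility persists, i.e. $\alpha_t\circ I_t^*=\iota_M\circ\alpha_t$ for all $t$, because $I$ is a global involution of the family, so $I_t^*$ is constant under parallel transport. The crucial structural input is that the invariant wall arrangement does \emph{not} move with $t$: for every pair of type $M$ with admissible marking one has $\alpha_t(\Delta^{I_t}(X_t))=\Delta(M)$, so each $\alpha_t(\widetilde{\calK}^{I_t}_{X_t})$ is a Kähler-type chamber of the \emph{same} fixed decomposition $\widetilde{\calC}_M\setminus\bigcup_{\dd\in\Delta(M)}\dd^\perp$.

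The heart of the argument is to pin down this chamber by a single fixed class. At $t_0$ the pair is projective by Proposition \ref{prop:invol}, so the invariant Kähler cone $\calK^{I_{t_0}}_{X_{t_0}}$ is a nonempty open subcone of $\calC^{I_{t_0}}_{X_{t_0}}$ containing invariant integral ample classes; picking such a class $h$ gives $h\in\alpha_{t_0}(\calK^{I_{t_0}}_{X_{t_0}})\subseteq\calK$, where $\calK:=\alpha_{t_0}(\widetilde{\calK}^{I_{t_0}}_{X_{t_0}})$. Note $h$ is invariant, i.e. $\iota_M(h)=h$, since $h\in\calK\subset\widetilde{\calC}_M\subset M_\R$. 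Now I would invoke the openness of the ample locus: since $\alpha_{t_0}^{-1}(h)$ is ample, Lemma \ref{lem:ampleopen} (openness of $\frakM_{h^\perp}^a$) shows that $\alpha_t^{-1}(h)$ remains ample for $t$ near $t_0$. Being ample and invariant, $\alpha_t^{-1}(h)$ lies in $\calK^{I_t}_{X_t}\subseteq\widetilde{\calK}^{I_t}_{X_t}$, so $h\in\alpha_t(\widetilde{\calK}^{I_t}_{X_t})$.

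To conclude, I observe that $\alpha_t(\widetilde{\calK}^{I_t}_{X_t})$ is a Kähler-type chamber of $M$ (for the fixed arrangement above) that contains $h$; as the chambers are pairwise disjoint and $h\in\calK$ avoids every wall, this forces $\alpha_t(\widetilde{\calK}^{I_t}_{X_t})=\calK$. Hence $\rho(X_t,I_t)=[\calK]$ is constant near $t_0$, so locally constant on $T$, hence constant, which gives the claim. The step I expect to be the main obstacle—and the one doing the real work—is justifying that the chamber is literally fixed rather than merely varying within a discrete-looking set: this hinges on the two facts that the invariant wall arrangement is $t$-independent (via $\alpha_t(\Delta^{I_t}(X_t))=\Delta(M)$) and that \emph{ampleness}, not just the Kähler property, is an open condition in the marked moduli space (Lemma \ref{lem:ampleopen}), which is exactly what transports the fixed class $h$ into the invariant Kähler cone of every nearby fibre.
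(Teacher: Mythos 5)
Your proposal is correct and follows essentially the same route as the paper: the paper also trivializes the local system, transports an invariant integral ample class $h\in\calK$ using the openness of the ample locus (packaged there as Lemma \ref{lem:stableopen} on the openness of $\mk$, whose proof is exactly your appeal to Lemma \ref{lem:ampleopen}), and concludes that $\widetilde{\calK}^{I_t}_{X_t}=\alpha_t^{-1}(\calK)$ because the stable invariant Kähler cone is determined by a single invariant Kähler class. Your explicit observation that the invariant wall arrangement $\Delta(M)$ is $t$-independent is the same fact the paper records just before defining $\rho$, so the two arguments coincide.
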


\begin{proof}
Let $(\pi,I):\calX\to T$ be a family over a connected base $T$. For $s\in T$ let 
\[U_{s}=\{t\in T:\widetilde{\calK}_{X_{t}}^{I_{t}}\cong\widetilde{\calK}_{X_s}^{I_s}\}.\]
We claim that $U_{s}\subset T$ is open.
Let $U\subset T$ be a contractible open neighbourhood of $s$ and $\alpha:(R^2\pi_*\Z)|_U\to L_{U}$ a trivialization such that $\alpha_s$ is admissible for $(X_s,I_s)$. 
Then for every $t\in U$ the marking $\alpha_t$ is admissible for $(X_t,I_t)$, and we obtain
a holomorphic map $\phi:U\to\frakM^+_\Mp$, where $\frakM^+_\Mp=P_0^{-1}(\Omega^+_\Mp)$ is the connected component determined by $\calK:=\alpha_s(\widetilde{\calK}_{X_s}^{I_s})$ as described above.
By Lemma \ref{lem:stableopen}, the set
\[V:=\phi^{-1}(\mk)\subset T\]
is a non-empty open neighbourhood of $s$. For every $t\in V$ there is a Kähler class inside $\alpha_t^{-1}(\calK)$, and since $\widetilde{\calK}_{X_t}^{I_t}$ is determined by one invariant Kähler class, this implies $\widetilde{\calK}_{X_t}^{I_t}=\alpha_t^{-1}(\calK)$. Hence $\alpha_t^{-1}\circ\alpha_s$ is a parallel transport operator mapping $\widetilde{\calK}_{X_s}^{I_s}$ to $\widetilde{\calK}_{X_t}^{I_t}$.
This shows that $U_s\subset T$ is open and since $T=\bigcup_{s\in T} U_s$, we have $T=U_s$ for every $s\in T$.
\end{proof}

Let $h\in H^2(X,\Z)^i\subset H^{1,1}(X,\Z)$ and $\calL$ be a line bundle on $X$ with $c_1(\calL)=h$. Then we have $\Def(X,i)\subset\Def(X,\calL)$. For $t\in\Def(X,i)$, let $h_t:=c_1(\calL_t)$ where $(X_t,\calL_t)$ is the fibre over $t\in\Def(X,\calL)$ in the universal deformation of $(X,\calL)$.

The following Proposition gives a characterization of stably invariant ample classes which is similar to Markman's notion of stably prime exceptional classes \cite{markmanprime}.

\begin{prop} \label{prop:stablyample}
A class $h\in H^2(X,\Z)^i$ belongs to $\widetilde{\calK}_X^i$ if and only if there is an analytic subvariety $Z\subset\Def(X,i)$ of complex codimension 1 such that for $t\in\Def(X,i)\setminus Z$ the class $h_t$ is an invariant ample class of $(X_t,I_t)$.
\end{prop}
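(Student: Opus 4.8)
The plan is to prove the two directions separately, relating membership in $\widetilde{\calK}_X^i$ to the ampleness of the deformed classes $h_t$ via the description of Kähler-type chambers and the openness results established earlier. Throughout I fix an admissible marking $\alpha:H^2(X,\Z)\to L_n$ and set $\calK:=\alpha(\widetilde{\calK}_X^i)\in\kt(M)$, so that the deformation $\Def(X,i)$ maps via the period map into $\Omega_\Mp^+$, with $\Def(X,i)\cong P(\Def(X,i))$ an open subset of $\Omega_\Mp$ by the Local Torelli discussion in Section \ref{sec:localdef}.

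For the forward direction, suppose $h\in\widetilde{\calK}_X^i$, so $\alpha(h)\in\calK$. First I would observe that the stable invariant Kähler cone is by definition the connected component of $\cx^i\setminus\bigcup_{D\in\Delta^i(X)}D^\perp$ containing $\calK_X^i$, and that for any $t\in\Def(X,i)$ the invariant wall divisors $\Delta^{I_t}(X_t)$ correspond under $\alpha_t$ to a subset of $\Delta(M)$ (since $\alpha_t(\Delta^{I_t}(X_t))=\Delta(M)$ for all $t$, the invariant walls are deformation-stable). The candidate subvariety $Z$ is the locus where some \emph{non-invariant} wall divisor becomes of type $(1,1)$ and obstructs ampleness of $h_t$; that is, $Z=\bigcup_{\d}\d^\perp\cap P(\Def(X,i))$ where $\d$ ranges over $\Dl$, which is precisely $\widetilde{\calD}'_M\cap P(\Def(X,i))$. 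By Lemma \ref{lem:discrloc} this is a locally finite union of hyperplane sections, hence an analytic subvariety of codimension $1$. For $t\notin Z$, I would argue that $h_t$ avoids all relevant walls: by Proposition \ref{prop:stableperiod}(ii) (applied to $(X_t,I_t)$ whose period point lies off $\calD'_M$), one has $\widetilde{\calK}_{X_t}^{I_t}=\calK_{X_t}^{I_t}$, so the chamber $\alpha_t^{-1}(\calK)$ coincides with the genuine invariant Kähler cone; since $h_t\in\alpha_t^{-1}(\calK)$ remains integral and invariant, it is an invariant ample class by the Global Torelli theorem together with Proposition \ref{prop:invol}(iv).

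For the converse, suppose such a $Z$ exists and pick any $t_0\in\Def(X,i)\setminus Z$ near the base point; then $h_{t_0}$ is ample, so $\alpha_{t_0}(h_{t_0})$ lies in the image of the Kähler cone and in particular in some Kähler-type chamber of $M$. Since $\alpha_t(h_t)$ is constant in $L_n$ (the class $h$ is flat under the trivialized local system) and the period map deforms $(X,i)$ continuously within the same component $\frakM_\Mp^+$, I would use the openness of $\mk$ (Lemma \ref{lem:stableopen}) to identify the chamber containing $\alpha_{t_0}(h_{t_0})$ with $\calK=\alpha(\widetilde{\calK}_X^i)$ itself, forcing $h\in\widetilde{\calK}_X^i$. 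The point is that the deformation from $t_0$ back to $0$ stays within $\Def(X,i)$, so $\alpha^{-1}\circ\alpha_{t_0}$ is a parallel transport operator commuting with the involutions and carrying the chamber structure along; a class that is ample after a small invariant deformation must already lie in the stable invariant Kähler cone, since only non-invariant walls can separate $h$ from $\calK_X^i$ inside $\widetilde{\calK}_X^i$, and those vanish generically.

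The main obstacle I anticipate is the careful bookkeeping relating the abstract Kähler-type chamber $\calK$ of $M$ to the actual cones $\widetilde{\calK}_{X_t}^{I_t}$ across the family, and in particular verifying that the exceptional locus $Z$ captures \emph{exactly} the failure of ampleness — neither too large nor too small. The forward containment ($h_t$ ample off $Z$) rests on showing that off the wall locus the chamber $\alpha_t^{-1}(\calK)$ is the full invariant Kähler cone, which is where Proposition \ref{prop:stableperiod}(ii) does the essential work; the subtlety is ensuring that $h$, being a fixed integral invariant class in the interior of $\widetilde{\calK}_X^i$, genuinely avoids the codimension-one locus for generic $t$ rather than lying accidentally on a wall for all $t$. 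I would handle this by invoking Lemma \ref{lem:ktopen}, which guarantees $\calK$ is open and hence contains $\alpha(h)$ with room to spare, so that the condition $\alpha_t(h)\notin\bigcup_{\d\in\Dl}\d^\perp$ defines a genuine (nonempty) open complement of a proper analytic subset.
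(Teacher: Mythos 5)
Your proposal is correct and follows essentially the same route as the paper's proof: the same exceptional locus $Z=\bigcup_{\d\in\Dl}\d^\perp\cap\Def(X,i)$, local finiteness from Lemma \ref{lem:discrloc}, the identification $\widetilde{\calK}_{X_t}^{I_t}=\alpha_t^{-1}(\calK)$ for all $t$ via Lemma \ref{lem:stableopen} (as in the proof of Proposition \ref{prop:definv}), and Proposition \ref{prop:stableperiod}(ii) to conclude $\calK_{X_t}^{I_t}=\widetilde{\calK}_{X_t}^{I_t}$ off $Z$. The only point to tighten is to state the chamber identification $\widetilde{\calK}_{X_t}^{I_t}=\alpha_t^{-1}(\calK)$ explicitly (after shrinking $\Def(X,i)$ so that every $(X_t,\alpha_t)$ lies in $\mk$) before applying Proposition \ref{prop:stableperiod}(ii), and to note that the Global Torelli theorem is not needed for ampleness --- projectivity of $X_t$ from Proposition \ref{prop:invol}(iv) plus integrality of the Kähler class $h_t$ suffices.
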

\begin{proof}
Let 
\[\pi:\calX\to\Def(X,i),\qquad I:\calX\to\calX\] 
be the universal deformation of $(X,i)=(\pi^{-1}(0),I_0)$ described in Section \ref{sec:localdef}.
We choose a trivialization $\alpha:R^2\pi_*\Z\to L_{\Def(X,i)}$ such that for every $t\in\Def(X,i)$, the marking $\alpha_t$ is admissible for $(X_t,I_t)$. Now the period map defines an open embedding $\Def(X)\subset\Omega_L$ such that 
\[\Def(X,i)=\Def(X)\cap\Omega_\Mp.\]
We have $(X,\alpha_0)\in\frakM_{\Mp,\calK}$, where $\calK:=\alpha_0(\sik)$, and by Lemma \ref{lem:stableopen} we can assume that $\Def(X,i)$ is sufficiently small such that $(X_t,\alpha_t)\in\mk$ for every $t\in\Def(X,i)$. As in the proof of Proposition \ref{prop:definv}, we see that
$\widetilde{\calK}_{X_t}^{I_t}=\alpha_t^{-1}(\calK)$.
By Lemma \ref{lem:discrloc}, the subset
\[Z:=\bigcup_{\dd\in\Dl}\dd^\perp\subset\Def(X,i)\] 
is a union of finitely many hyperplanes. For every $t\not\in Z$ we have $\calK_{X_t}^{I_t}=\alpha_t^{-1}(\calK)$ by Proposition \ref{prop:stableperiod}. Hence if $h\in H^2(X,\Z)^i$ belongs to $\widetilde{\calK}_X^i$, then the class \[h_t=\alpha_t^{-1}\circ\alpha_0(h)\in H^2(X_t,\Z)^{I_t}\]
is ample for every $t\not\in Z$.

Conversely let $h\in H^2(X,\Z)^i$ and assume that there exists a $t\in\Def(X,i)$ such that $h_t$ is ample. Then 
\[h=\alpha_0^{-1}\circ\alpha_t(h_t)\in\alpha_0^{-1}(\calK)=\sik\]
belongs to the stable invariant Kähler cone.
\end{proof}

In the following our aim is to show that also the converse of Proposition \ref{prop:definv} is true, that is, the isometry class of the stable invariant Kähler cone completely determines the deformation type.

Let $\calK$ be a Kähler-type chamber of $M$ and
$P_{\calK}:\mk\to\Omega^+_{\Mp}$
be the restriction of the period map.
Let $\Omega_\Mp^0:=\Omega_\Mp^+\setminus\widetilde{\calD}_M$. We have seen in Lemma \ref{lem:discrloc} that $\Omega_\Mp^0\subset\Omega_\Mp^+$ is an open subset.

\begin{lem} \label{lem:image}
The image of $P_{\calK}$ is $\Omega_\Mp^0$.
\end{lem}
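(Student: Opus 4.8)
The plan is to prove the two inclusions $\operatorname{Im}(\pk)\subseteq\Omega_\Mp^0$ and $\Omega_\Mp^0\subseteq\operatorname{Im}(\pk)$ separately. The first inclusion is essentially Proposition \ref{prop:stableperiod}(i) phrased at the level of markings. Let $(X,\alpha)\in\mk$ and set $\eta:=P_0(X,\alpha)$. Suppose for contradiction that $\eta\in\widetilde{\calD}_M$, so $\eta\in\dd^\perp$ for some $\dd\in\Delta(\Mp)$. Then $D:=\alpha^{-1}(\dd)$ is orthogonal to the period, hence of type $(1,1)$, and by Proposition \ref{prop:newample}(i) it is a wall divisor, $D\in\Delta(X)$. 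On the other hand, the condition $(X,\alpha)\in\mk$ provides a Kähler class $\kappa\in\kx$ with $\alpha(\kappa)\in\calK\subset\widetilde{\calC}_M\subset M_\R$, so $\kappa\in\alpha^{-1}(M)_\R$. Since $\dd\in\Mp$, the class $D$ lies in $\alpha^{-1}(M)^\perp$, whence $(D,\kappa)=0$. This contradicts Theorem \ref{thm:ktchambers}, by which $\kx$ is a connected component of $\cx\setminus\bigcup_{z\in\Delta(X)}z^\perp$ and therefore contains no class orthogonal to a wall divisor. Thus $\eta\notin\widetilde{\calD}_M$.

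For the reverse inclusion, fix $\eta\in\Omega_\Mp^0=\Omega_\Mp^+\setminus\widetilde{\calD}_M$. By surjectivity of the period map I choose $(X_0,\alpha_0)\in\frakM^+_\Mp$ with $P_0(X_0,\alpha_0)=\eta$, set $\calC_\eta:=\alpha_0(\calC_{X_0})$, and write $W_\eta:=\{\dd\in\Delta(\Ln):(\dd,\eta)=0\}$, so that $\alpha_0(\Delta(X_0))=W_\eta$ by Proposition \ref{prop:newample}(i). The first step is to produce an integral class lying simultaneously in $\calK$ and in the interior of a Kähler-type chamber of $\calC_\eta$. Because $\eta\notin\widetilde{\calD}_M$, no $\dd\in W_\eta$ can lie in $\Mp$ (otherwise $\dd\in\Delta(\Mp)$ and $\eta\in\dd^\perp\subset\widetilde{\calD}_M$); hence every $\dd\in W_\eta$ has nonzero $M$-component, and $\dd^\perp\cap M_\R$ is a proper hyperplane of $M_\R$. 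As $\calK$ is a nonempty open subset of $M_\R$ by Lemma \ref{lem:ktopen} and $W_\eta\subset\Ln$ is countable, the Baire category theorem shows that $\calK$ is not contained in the countable union $\bigcup_{\dd\in W_\eta}(\dd^\perp\cap M_\R)$ of nowhere dense hyperplanes. The set of points of $\calK$ avoiding all these hyperplanes is therefore open and nonempty, so it contains a rational, and hence — both $\calK$ and the Kähler-type chambers being cones — an integral class $h\in\calK$ lying in the interior of a connected component $C$ of $\calC_\eta\setminus\bigcup_{\dd\in W_\eta}\dd^\perp$.

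The second step realizes $C$ as a genuine Kähler cone. By Theorem \ref{thm:ktchambers} the component $\alpha_0^{-1}(C)$ is a Kähler-type chamber of $X_0$, so by definition there are a monodromy operator $g\in\monh(X_0)$ and a bimeromorphic map $f:X_0\dashrightarrow\widetilde{X}$ with $\alpha_0^{-1}(C)=g(f^*\calK_{\widetilde{X}})$. Put $\alpha:=\alpha_0\circ g\circ f^*$. Since $g$ and $f^*$ are parallel transport Hodge isometries, $\alpha$ is a marking with $(\widetilde{X},\alpha)\in\ml$, and $P_0(\widetilde{X},\alpha)=\alpha_0\big(g\,f^*(H^{2,0}(\widetilde X))\big)=\alpha_0(H^{2,0}(X_0))=\eta$, so $(\widetilde X,\alpha)\in\frakM^+_\Mp$. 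Moreover $\alpha(\calK_{\widetilde{X}})=\alpha_0(g(f^*\calK_{\widetilde{X}}))=\alpha_0(\alpha_0^{-1}(C))=C$, and since $h\in C\cap\calK$ this gives $\calK\cap\alpha(\calK_{\widetilde{X}})\neq\emptyset$. Hence $(\widetilde{X},\alpha)\in\mk$ with $\pk(\widetilde{X},\alpha)=\eta$, proving $\eta\in\operatorname{Im}(\pk)$.

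The main obstacle is the reverse inclusion, and within it the passage from the purely lattice-theoretic chamber $C$ to an honest Kähler cone: one must invoke the description of Kähler-type chambers (Theorem \ref{thm:ktchambers}, resting on the Global Torelli theorem) to produce the bimeromorphic model $\widetilde{X}$, and then transport the marking so that \emph{both} the period $\eta$ and the chamber $C$ are preserved simultaneously. The other delicate point is choosing $h$ inside $\calK$ and off every type-$(1,1)$ wall at once; here the hypothesis $\eta\notin\widetilde{\calD}_M$ is exactly what rules out walls contained in $\Mp$ (which, being orthogonal to all of $M_\R$, would contain the whole of $\calK$), after which the Baire-category argument applies to the remaining proper hyperplanes.
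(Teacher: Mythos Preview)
Your proof is correct and follows the same route as the paper: show that no wall divisor at $\eta$ can lie in $\Mp$, conclude that the open cone $\calK$ meets some K\"ahler-type chamber, and then transport the marking through $g\circ f^*$ to land in $\mk$. The only cosmetic difference is that you invoke Baire category (via countability of $W_\eta$) where the paper tacitly uses the local finiteness of walls built into Theorem~\ref{thm:ktchambers}; note that Baire alone does not give \emph{openness} of the complement, but you do not actually need openness or the integrality of $h$---any real class in $\calK$ off all the walls already lies in a chamber $C$ with $C\cap\calK\neq\emptyset$, which is all the rest of your argument uses.
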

\begin{proof}
By definition, for every $(X,\alpha)\in\mk$ there exists a Kähler class $x$ inside $\alpha^{-1}(\calK)\subset\alpha^{-1}(M_\R)$. 
Since $x\not\in D^\perp$ for every wall divisor $D\in\Delta(X)$, this shows that $P(X,\alpha)\not\in\widetilde{\calD}_M$.

Conversely, assume that $\eta\in\Omega_\Mp^+\setminus\widetilde{\calD}_M$. By the surjectivity of the period map, there exists a marked pair $(X,\alpha)\in\frakM_{L_n}^0$ with $P(X,\alpha)=\eta$. As noted before, since $\eta\in\Omega_\Mp^+$ and therefore $(X,\alpha)\in\frakM_\Mp^+$, we have $\alpha^{-1}(\calK)\subset\cx$. 

We claim that the cone $\alpha^{-1}(\calK)$ is not contained in the hyperplane $D^\perp$ for any $D\in\Delta(X)$. Indeed, since $\calK\subset M_\R$ is open, this would imply $\dd:=\alpha(D)\in M^\perp$, and therefore $\dd\in\Delta(\Mp)$. Then $P(X,\alpha)\in\dd^\perp\subset\widetilde{\calD}_M$ gives a contradiction.

Now it follows from Theorem \ref{thm:ktchambers} that $\alpha^{-1}(\calK)$ intersects a Kähler-type chamber of $X$. By definition, this means, that there exists a monodromy operator $g\in\monh(X)$ and a birational model $f:X\dashrightarrow\widetilde{X}$ such that 
\[\alpha^{-1}(\calK)\cap g(f^*\calK_{\widetilde{X}})\neq\emptyset\]
and therefore $\widetilde{\alpha}^{-1}(\calK)\cap\calK_{\widetilde{X}}\neq\emptyset$, where
\[\widetilde{\alpha}:=\alpha\circ g\circ f^*:H^2(\widetilde{X},\Z)\to L_n.\]
Since $g\circ f^*$ is a Hodge isometry and a parallel transport operator, we have 
\[P_0(\widetilde{X},\widetilde{\alpha})=P_0(X,\alpha)=\eta,\] and thus $(\widetilde{X},\widetilde{\alpha})\in\mk$ is a marked pair with $P_\calK(\widetilde{X},\widetilde{\alpha})=\eta$.
\end{proof}

Since $\mk\subset\frakM_\Mp^+$ is open, the period map restricts to a local isomorphism $P_\calK:\mk\to\Omega_\Mp^0$. We now want to use the path-connectedness of $\Omega_\Mp^0$ to show that $\mk$ is path-connected.  We will then define a family of involutions over $\mk$ containing any pair $(X,i)$ with $\rho(X,i)=[\calK]$.

\begin{lem} \label{lem:periodpath}
The space $\Omega_\Mp^0$ is path-connected.
\end{lem}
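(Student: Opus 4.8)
The plan is to reduce the statement to \emph{connectedness} and then produce the required paths by a general-position argument. Since $\Omega_\Mp^0=\Omega_\Mp^+\setminus\widetilde{\calD}_M$ is an open subset of the complex manifold $\Omega_\Mp^+$ (openness being exactly the content of Lemma \ref{lem:discrloc}), it is itself a manifold and in particular locally path-connected. For a locally path-connected space connectedness and path-connectedness coincide, so it suffices to show that $\Omega_\Mp^0$ is connected.

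Recall that $\Omega_\Mp^+$ is a bounded symmetric domain, hence a connected (and path-connected) complex manifold of complex dimension $\rk(\Mp)-2$. For each $\dd\in\Delta(\Mp)$ the intersection $\dd^\perp\cap\Omega_\Mp^+$ is a closed complex submanifold of complex codimension $1$, hence of real codimension $2$, and by Lemma \ref{lem:discrloc} the family $\{\dd^\perp:\dd\in\Delta(\Mp)\}$ is locally finite in $\Omega_\Mp$. The underlying geometric principle is therefore the standard fact that removing a locally finite union of complex hypersurfaces from a connected complex manifold leaves a connected space; I would make this explicit by a dimension count.

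Concretely, given two points $p,q\in\Omega_\Mp^0$, I would first join them by a smooth path $\gamma\colon[0,1]\to\Omega_\Mp^+$, which exists because $\Omega_\Mp^+$ is path-connected. Its image is compact, so by local finiteness it meets only finitely many of the hyperplanes, say $\dd_1^\perp,\dots,\dd_k^\perp$. Keeping the endpoints fixed (they already lie in $\Omega_\Mp^0$), I would perturb $\gamma$ into general position with respect to these finitely many submanifolds. Since
\[
\dim_\R\gamma+\dim_\R\bigl(\dd_j^\perp\cap\Omega_\Mp^+\bigr)=1+\bigl(2\rk(\Mp)-6\bigr)=2\rk(\Mp)-5<2\rk(\Mp)-4=\dim_\R\Omega_\Mp^+,
\]
transversality forces a generic perturbed path to be disjoint from every $\dd_j^\perp$; the resulting path lies in $\Omega_\Mp^0$ and connects $p$ to $q$. (In the degenerate case $\rk(\Mp)=2$ the lattice $\Mp$ is positive definite, so $\Delta(\Mp)=\emptyset$, $\widetilde{\calD}_M=\emptyset$, and $\Omega_\Mp^+$ is a single point, for which the claim is trivial.)

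The only delicate point is the transversality step: one must ensure the perturbation can be carried out rel endpoints and within the open manifold $\Omega_\Mp^+$. This is standard via the Thom transversality theorem applied in a tubular neighbourhood of $\gamma([0,1])$, the dimension count above guaranteeing that transversal intersection with each real-codimension-$2$ submanifold is empty. I expect no genuine obstacle here beyond setting up this general-position argument carefully.
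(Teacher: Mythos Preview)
Your proof is correct and follows essentially the same approach as the paper: both start from a path in the connected domain $\Omega_\Mp^+$, invoke the local finiteness of Lemma~\ref{lem:discrloc} to reduce to finitely many hyperplanes along the path, and then use the real-codimension-$2$ principle to avoid them. The only cosmetic difference is that the paper packages the last step as a finite chain of open sets $V_i$ in each of which the complement of finitely many hyperplanes is path-connected, whereas you perturb the original path directly via transversality; the underlying idea is identical.
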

\begin{proof}
Let $\eta_1,\eta_2\in\Omega_\Mp^0\subset\Omega_\Mp^+$ and $\gamma:[0,1]\to\Omega_\Mp^+$ be a path connecting $\eta_1$ and $\eta_2$. By Lemma \ref{lem:discrloc}, for any $t\in[0,1]$, there exists a path-connected open neighbourhood $U_t\subset\Omega_\Mp^+$ of $\gamma(t)$ which intersects only finitely many hyperplanes $\dd^\perp$ for $\dd\in\Delta(\Mp)$. Let $V_1,\ldots,V_k$ be a finite subcovering of $\{U_t\}$ such that $\eta_1\in V_1,\ \eta_2\in V_k$ and 
\[V_i\cap V_{i+1}\neq\emptyset,\qquad i=0,\ldots,k-1.\] For any $i$, the set $V_i\setminus\widetilde{\calD}_M$ is the complement in $V_i$ of finitely many hyperplanes of real codimension 2 and therefore path-connected. Since $V_i\cap V_{i+1}\subset\Omega_\Mp^+$ is open, we have $V_i\cap V_{i+1}\cap\Omega_\Mp^0\neq\emptyset$, which shows the claim. 
\end{proof}

Locally, paths in $\Omega_\Mp^0$ can be lifted to paths in $\mk$ using the Local Torelli theorem. To connect these paths in $\mk$, we will need a dense subset of points which are unique in their fibres with respect to $P_\calK$. Let
\[\Omega_\Mp':=\Omega_\Mp\setminus\bigcup_{\delta\in L_n\setminus M}\delta^\perp\]
and
\[\mk':=\pk^{-1}(\Omega_\Mp').\]
For $(X,\alpha)\in\mk'$ we have $\alpha(H^{1,1}(X,\Z))=M$ and therefore $\alpha(\kx)=\calK$.

\begin{lem}
If $(X,\alpha)\in\mk'$, then
\[\pk^{-1}(\pk(X,\alpha))=\{(X,\alpha)\}.\]
\end{lem}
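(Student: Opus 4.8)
The plan is to deduce the statement directly from the Global Torelli theorem. Suppose $(Y,\beta)\in\pk^{-1}(\pk(X,\alpha))$ and write $\eta:=\pk(X,\alpha)=\pk(Y,\beta)$. Since $\eta\in\Omega_\Mp'$ and $(Y,\beta)$ lies in the domain $\mk$ of $\pk$, the pair $(Y,\beta)$ also lies in $\mk'=\pk^{-1}(\Omega_\Mp')$. I would then consider the isometry
\[g:=\beta^{-1}\circ\alpha:H^2(X,\Z)\longrightarrow H^2(Y,\Z)\]
and show that $g$ satisfies the three hypotheses of the Global Torelli theorem.

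First, since $(X,\alpha)$ and $(Y,\beta)$ lie in the same connected component $\frakM_{L_n}^0$, the isometry $g$ is a parallel transport operator: a path in $\frakM_{L_n}^0$ induces, via the local Kuranishi families and the marking, a family realizing $g$ as parallel transport, exactly as used in the period-map section. Second, the equality $P_0(X,\alpha)=P_0(Y,\beta)=\eta$ means $\alpha(H^{2,0}(X))=\eta=\beta(H^{2,0}(Y))$, so $g$ maps $H^{2,0}(X)$ onto $H^{2,0}(Y)$ and is therefore a Hodge isometry. Third ---and this is where the hypothesis $(X,\alpha)\in\mk'$ is \emph{essential}--- both pairs lie in $\mk'$, so by the remark preceding the lemma we have $\alpha(\kx)=\calK=\beta(\calK_Y)$; hence $g(\kx)=\beta^{-1}(\calK)=\calK_Y$, and in particular $g$ maps a Kähler class to a Kähler class.

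With all three hypotheses verified, the Global Torelli theorem yields a biholomorphic map $f:Y\to X$ with $f^*=g=\beta^{-1}\circ\alpha$, equivalently $\alpha=\beta\circ f^*$. This is precisely the condition for $(X,\alpha)$ and $(Y,\beta)$ to represent the same point of the moduli space of marked pairs, so $(Y,\beta)=(X,\alpha)$, which gives the claim.

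The main obstacle is the verification of the Kähler-to-Kähler condition, and it is exactly here that the restriction to $\Omega_\Mp'$ does its work. For a period point lying on some $\delta^\perp$ with $\delta\in L_n\setminus M$, the Néron--Severi lattice of $X$ could strictly contain $\alpha^{-1}(M)$, producing wall divisors outside $M$; then $g$ would in general only map $\kx$ into the Kähler cone of a birational model of $Y$, yielding a mere birational map and allowing genuinely distinct marked pairs in the same fiber --- this is precisely the non-injectivity phenomenon the paper emphasizes. Removing the hyperplanes $\delta^\perp$ for $\delta\in L_n\setminus M$ forces $\alpha(H^{1,1}(X,\Z))=M$ and hence $\alpha(\kx)=\calK$, which is what allows the argument to close.
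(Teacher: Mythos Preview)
Your proof is correct and follows essentially the same approach as the paper's: both take a second marked pair in the fiber, use membership in $\mk'$ to get $\alpha(\kx)=\calK=\beta(\calK_Y)$, verify that $\beta^{-1}\circ\alpha$ is a parallel-transport Hodge isometry sending K\"ahler to K\"ahler, and apply the Global Torelli theorem to conclude. Your additional commentary on why the restriction to $\Omega_\Mp'$ is needed is accurate and helpful, but not part of the formal argument.
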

\begin{proof}
For $i=1,2$, let $(X_i,\alpha_i)\in\mk'$ with $\pk(X_1,\alpha_1)=\pk(X_2,\alpha_2)$. Then $\alpha_2^{-1}\circ\alpha_1:H^2(X_1,\Z)\to H^2(X_2,\Z)$ is a Hodge isometry and a parallel transport operator that maps $\calK_{X_1}$ onto $\calK_{X_2}$. 
By the Global Torelli theorem, $\alpha_2^{-1}\circ\alpha_1$ is induced by an isomorphism $f:X_2\to X_1$, which defines an isomorphism $(X_1,\alpha_1)\cong (X_2,\alpha_2)$ of marked pairs.
\end{proof}

The following Proposition is a generalization of \cite[Cor. 5.11]{markmanprime}, which contains the same statement for a rank 1 lattice $M=\Z h$ with $(h,h)>0$. In this case $\calK$ is the ray $\R_{>0}\cdot h$ and $\mk=\frakM_{h^\perp}^a$. We will use the same idea for the proof.

\begin{prop}
$\mk$ is path-connected.
\end{prop}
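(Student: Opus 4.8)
The plan is to mimic Markman's argument (Cor. 5.11) while accounting for the fact that $\calK$ is now a higher-rank Kähler-type chamber rather than a single ray. The strategy rests on three facts already assembled above: the period map $\pk:\mk\to\Omega_\Mp^0$ is a surjective local isomorphism (Lemma \ref{lem:image} together with openness of $\mk$), the target $\Omega_\Mp^0$ is path-connected (Lemma \ref{lem:periodpath}), and the preimage $\mk'=\pk^{-1}(\Omega_\Mp')$ consists of points that are \emph{unique in their fiber}. The reason path-connectedness of the base does not immediately give path-connectedness of the total space is that $\pk$ is not a covering map: its fibers can be infinite and vary in cardinality, so paths downstairs need not lift compatibly. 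The role of the uniqueness locus $\mk'$ is precisely to rigidify the lifting problem over a dense open set.

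First I would fix a basepoint $(X_0,\alpha_0)\in\mk$ and an arbitrary target $(X_1,\alpha_1)\in\mk$, and seek a path between them. By a small perturbation I would reduce to connecting points of $\mk'$: since $\Omega_\Mp'=\Omega_\Mp\setminus\bigcup_{\delta\in L_n\setminus M}\delta^\perp$ removes only countably many hyperplanes, $\Omega_\Mp'\cap\Omega_\Mp^0$ is dense in $\Omega_\Mp^0$, and because $\pk$ is a local isomorphism each point of $\mk$ has arbitrarily nearby points lying in $\mk'$; it therefore suffices to join two points of $\mk'$ by a path in $\mk$. Next I would choose a path $\gamma:[0,1]\to\Omega_\Mp^0$ connecting their images, which exists by Lemma \ref{lem:periodpath}. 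The key step is to lift $\gamma$ to a path in $\mk$. Using the Local Torelli theorem, $\pk$ is a local biholomorphism, so the path can be lifted locally; covering $[0,1]$ by finitely many intervals $[t_{j},t_{j+1}]$ over which $\gamma$ lands in an evenly-covered-looking chart, I obtain local lifts.

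The main obstacle — and the place where the argument requires care — is reconciling the local lifts at the overlap points $t_j$. At such a point the fiber $\pk^{-1}(\gamma(t_j))$ may contain several marked pairs, so the endpoint of one local lift need not coincide with the starting point of the next. The mechanism for repairing this is to perturb the path $\gamma$ slightly near each $t_j$ so that the junction point $\gamma(t_j)$ is pushed into the dense uniqueness locus $\Omega_\Mp'$; by the preceding Lemma the fiber over such a point is a single marked pair, which forces the two local lifts to agree there. Concretely, I would choose the intermediate points $\gamma(t_j)$ to lie in $\Omega_\Mp'\cap\Omega_\Mp^0$ (possible by density and the fact that each local chart is open), lift $\gamma$ on each subinterval via the inverse of the local isomorphism, and glue: the lifts match at each $t_j$ because there is only one point of $\mk$ over $\gamma(t_j)$. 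Concatenating these matched lifts yields a single continuous path in $\mk$ from $(X_0,\alpha_0)$ to $(X_1,\alpha_1)$, which establishes path-connectedness. The only subtlety to verify is that the local inverses of $\pk$ can be chosen consistently on the overlaps, but this follows because the value at the common point $\gamma(t_j)$ determines the local inverse uniquely once we know it hits the unique fiber point there.
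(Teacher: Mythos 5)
Your argument is correct and follows essentially the same route as the paper: lift a path in $\Omega_\Mp^0$ locally via the Local Torelli theorem and arrange the junction points to lie in the dense locus $\Omega_\Mp'$, where the fiber of $P_\calK$ is a single point, so that consecutive local lifts necessarily agree there. The only cosmetic difference is that the paper chooses the path $\gamma$ generically once and for all so that $\gamma^{-1}(\Omega_\Mp')$ is dense in $[0,1]$ and then picks the junction parameters inside that dense set, rather than perturbing the path near each junction afterwards; note also that the matching of lifts really does rest on the singleton fiber (as you say), not on uniqueness of local sections, which would require a Hausdorffness that $\mk$ need not enjoy.
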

\begin{proof}
By the Local Torelli theorem  and Lemma \ref{lem:stableopen}, the surjective map 
\[\pk:\mk\to\Omega_\Mp^0\] is a local isomorphism.  
Let 
\[(X_0,\alpha_0),(X_1,\alpha_1)\in\mk\] and $\eta_i:=\pk(X_i,\alpha_i)$.
Let $\gamma:[0,1]\to\Omega_\Mp^0$ be a continuous path with $\gamma(i)=\eta_i$. It follows from the proof of Lemma \ref{lem:periodpath} that $\gamma$ can be chosen sufficiently generic such that 
\[T:=\gamma^{-1}(\Omega_\Mp')\subset[0,1]\]
is dense. For every $s\in P_\calK^{-1}(\gamma([0,1]))$ let $U_s\subset\mk$ be a path-connected open neighbourhood of $s$ which is mapped isomorphically onto an open subset of $\Omega_\Mp^0$. Then the sets $\pk(U_s)$ form an open covering of $\gamma([0,1])$, and we choose a finite subcovering 
\[V_i=\pk(U_{s_i}),\ i=0,\ldots,n+1.\]
We can assume that 
\[p_0:=(X_0,\alpha_0)\in U_{s_0},\ p_{n+1}:=(X_1,\alpha_1)\in U_{s_n}\]
and that $V_{i-1}\cap V_i\neq\emptyset$ for $i=1,\ldots,n$.  Let $t_0:=0,\ t_{n+1}:=1$,
\[t_i\in\gamma^{-1}(V_{i-1}\cap V_i)\cap T,\qquad i=1,\ldots,n,\]
and $p_i\in\mk'$ be the unique element of the fibre $\pk^{-1}(\gamma(t_i))$. Using the isomorphism $P_\calK|_{U_{s_i}}:U_{s_i}\to V_i$, the path $\gamma|_{{[t_{i},t_{i+1}]}}$ can be lifted to a path connecting $p_{i}$ with $p_{i+1}$.
\end{proof}

\begin{prop} \label{prop:inv}
For $(X,\alpha)\in\mk$, there exists a non-symplectic involution $i:X\to X$ with $i^*=\alpha^{-1}\circ\iota_M\circ\alpha$.
In particular, $(X,i)$ is of type $M$.
\end{prop}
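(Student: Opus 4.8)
The plan is to produce the desired involution $i$ by means of the Global Torelli theorem, using the candidate isometry $g:=\alpha^{-1}\circ\iota_M\circ\alpha$ on $H^2(X,\Z)$. First I would verify that $g$ is a Hodge isometry. Since $(X,\alpha)\in\mk\subset\frakM^+_\Mp$, the period point $P_0(X,\alpha)$ lies in $\Omega_\Mp$, so the marking $\alpha$ sends $H^{2,0}(X)=\C\omega$ into $M^\perp_\C$. As $\iota_M$ acts as $-\id$ on $M^\perp$ (because $M=(L_n)^{\iota_M}$ and $\iota_M$ is an involution, its $(-1)$-eigenspace is exactly $\Mp$ up to finite index, and on $\Mp_\C$ it acts by $-1$), we get $g^*\omega=-\omega$. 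In particular $g$ preserves the Hodge decomposition, hence is a Hodge isometry, and moreover $g$ will turn out to be non-symplectic in the sense $g^*\omega=-\omega$.

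Next I would check that $g$ is a parallel transport operator. This is immediate: $\iota_M\in\mon^2(L_n)$ by admissibility of $M$, so $g=\alpha^{-1}\circ\iota_M\circ\alpha$ lies in $\alpha^{-1}\circ\mon^2(L_n)\circ\alpha=\mon^2(X)$, the monodromy group of $X$, and every monodromy operator is by definition a parallel transport operator. The remaining hypothesis of the Global Torelli theorem is that $g$ maps a Kähler class to a Kähler class. This is where the hypothesis $(X,\alpha)\in\mk$ (rather than merely $\frakM^+_\Mp$) is essential: by definition of $\mk$ there exists a Kähler class $x\in\kx$ with $\alpha(x)\in\calK\subset M_\R$. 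Since $\alpha(x)\in M_\R$ and $\iota_M$ restricts to the identity on $M$, we have $g(x)=\alpha^{-1}(\iota_M(\alpha(x)))=\alpha^{-1}(\alpha(x))=x$, which is a Kähler class. Thus $g$ fixes a Kähler class, and in particular maps a Kähler class to a Kähler class.

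By the Global Torelli theorem there exists a biholomorphic map $i:X\to X$ with $i^*=g=\alpha^{-1}\circ\iota_M\circ\alpha$. It remains to argue that $i$ is a \emph{non-symplectic involution}. From $i^*\omega=g^*\omega=-\omega$ we see $i$ is non-symplectic. For the involution property, note $(i^*)^2=(\alpha^{-1}\circ\iota_M\circ\alpha)^2=\alpha^{-1}\circ\iota_M^2\circ\alpha=\id$ since $\iota_M$ is an involution; hence $(i^2)^*=\id$ on $H^2(X,\Z)$, and Theorem \ref{thm:faithful} forces $i^2=\id_X$. Since $i^*=-\id$ on $\omega$, the map $i$ is not the identity, so $i$ is a genuine involution. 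Finally, $\alpha\circ i^*=\alpha\circ g=\iota_M\circ\alpha$, and $(X,\alpha)\in\ml$ because $\mk\subset\frakM^0_{L_n}$, so $\alpha$ is an $M$-admissible marking and $(X,i)$ is of type $M$.

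The main obstacle, and the reason the statement is phrased for $\mk$ rather than for all of $\frakM^+_\Mp$, is precisely the Kähler-class hypothesis of the Global Torelli theorem: one must exhibit an honest Kähler class fixed by $g$, and this is exactly what membership in $\mk$ guarantees via the intersection $\calK\cap\alpha(\kx)\neq\emptyset$. The verification that $\iota_M$ acts as $-\id$ on $\Mp_\C$ (so that $g^*\omega=-\omega$) is routine but should be made precise, since it is what ensures $i$ is non-symplectic rather than merely a Hodge automorphism; the remaining steps are then formal consequences of Theorem \ref{thm:faithful} and the definitions.
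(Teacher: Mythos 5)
Your proof is correct and follows essentially the same route as the paper: $g=\alpha^{-1}\circ\iota_M\circ\alpha$ is a monodromy operator since $\iota_M\in\mon^2(L_n)$, it is a Hodge isometry with $g(\omega)=-\omega$ because the period lies in $\Omega_\Mp$ and $\iota_M=-\id$ on $\Mp$, it fixes a Kähler class because $\iota_M$ acts trivially on $\calK\subset M_\R$ and $\calK\cap\alpha(\kx)\neq\emptyset$, and then the Global Torelli theorem together with Theorem \ref{thm:faithful} yields the involution. The only cosmetic difference is that you fix a single Kähler class in $\alpha^{-1}(\calK)$ where the paper notes that $g$ acts trivially on the whole chamber; the argument is identical.
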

\begin{proof}
Since $M$ is admissible, we have $\iota_M\in\mon^2(L_n)$, and therefore 
\[g:=\alpha^{-1}\circ\iota_M\circ\alpha:H^2(X,\Z)\to H^2(X,\Z)\]
is a monodromy operator. Furthermore, from $P(X,\alpha)\in\Omega_\Mp$ we obtain 
\[\alpha(H^{2,0}(X))\subset M^\perp\otimes\C\]
and hence that $g(\omega)=-\omega$. In particular, $g$ is a Hodge isometry. Finally, $g$ acts trivially on the chamber $\alpha^{-1}(\calK)$, which by assumption contains a Kähler class. By the Global Torelli theorem (Theorem \ref{thm:gt}), there exists an automorphism $i:X\to X$ with $i^*=g$. We have $i^*\circ i^*=\id_{H^2(X,\Z)}$, which by Theorem \ref{thm:faithful} shows that $i$ is an involution. The last assertion follows immediately from $i^*=\alpha^{-1}\circ\iota_M\circ\alpha$.
\end{proof}

\begin{thm} \label{thm:def}
Let $(X_1,i_1),(X_2,i_2)$ be two pairs of type $M$ with isometric stable invariant Kähler cones. Then $(X_1,i_1)$ and $(X_2,i_2)$ are deformation equivalent.
\end{thm}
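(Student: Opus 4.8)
The plan is to connect $(X_1,i_1)$ and $(X_2,i_2)$ through the family constructed over $\frakM_{\Mp,\calK}$. Since the two pairs have isometric stable invariant Kähler cones, after choosing admissible markings $\alpha_j:H^2(X_j,\Z)\to L_n$ we have $[\alpha_1(\widetilde{\calK}_{X_1}^{i_1})]=[\alpha_2(\widetilde{\calK}_{X_2}^{i_2})]$ in $\kt(M)/\Gamma_M$. By composing $\alpha_2$ with a suitable element of $\Gamma(M)$ we may assume both markings send the stable invariant Kähler cone to the \emph{same} chamber $\calK\in\kt(M)$, and moreover (by the discussion preceding Lemma \ref{lem:image}) that both marked pairs lie in the same connected component $\frakM_{\Mp}^+$. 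Then by construction $(X_j,\alpha_j)\in\mk$ for $j=1,2$, since each $\alpha_j(\calK_{X_j}^{i_j})$ is a nonempty subset of $\calK$ consisting of invariant Kähler classes.

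The core of the argument is to realize $\mk$ as the base of a single family of non-symplectic involutions. First I would invoke the previous Proposition that $\mk$ is path-connected. Next, over $\mk$ there is a tautological family of marked manifolds $(\pi,\alpha):\calX\to\mk$, and by Proposition \ref{prop:inv} every fibre $X$ over $(X,\alpha)\in\mk$ carries a canonical non-symplectic involution $i$ with $i^*=\alpha^{-1}\circ\iota_M\circ\alpha$. The key point is that these fibrewise involutions fit together into a single holomorphic involution $I:\calX\to\calX$ with $\pi\circ I=\pi$: the involution is defined lattice-theoretically by the fixed operator $\iota_M$ transported through the trivialized local system $R^2\pi_*\Z\cong L_n$, and the Global Torelli theorem (together with Theorem \ref{thm:faithful}, which guarantees uniqueness of the biregular involution inducing $\iota_M$ on cohomology) makes this assignment holomorphic in the base. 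This produces a family $(\pi,I):\calX\to\mk$ of non-symplectic involutions of type $M$.

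Granting such a family, the conclusion is immediate: both $(X_1,i_1)$ and $(X_2,i_2)$ occur as fibres $(X_{t_j},I_{t_j})$ over points $t_j=(X_j,\alpha_j)\in\mk$, and since $\mk$ is path-connected we may connect $t_1$ and $t_2$ by a path, realizing the two pairs as deformation equivalent in the sense of the definition.

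The main obstacle is the holomorphicity and global well-definedness of the family involution $I$. Fibrewise Proposition \ref{prop:inv} gives an involution on each $X_t$, but one must check that these glue to a holomorphic map on the total space $\calX$. The natural way to see this is to argue locally over the base: for $t$ in a small contractible neighbourhood $U\subset\mk$, the trivialized marking identifies $R^2\pi_*\Z|_U$ with the constant sheaf $L_n$, so the constant operator $\iota_M$ defines a continuous family of Hodge isometries $g_t$, each a parallel-transport operator preserving a Kähler class (since $g_t$ acts trivially on $\alpha_t^{-1}(\calK)$). Global Torelli yields a unique biregular $i_t$ inducing $g_t$, and the uniqueness forces these to patch into a holomorphic involution over $U$; uniqueness again ensures the locally defined involutions agree on overlaps, yielding $I$ over all of $\mk$. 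I would also need to confirm that $\mk$ is smooth (so as to serve as a legitimate base), which follows since $\mk$ is an open subset of the manifold $\frakM_{\Mp}^+$.
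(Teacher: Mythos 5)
Your proposal follows essentially the same route as the paper: reduce to two marked pairs $(X_j,\alpha_j)$ in the same $\mk$, use path-connectedness of $\mk$, and assemble the fibrewise involutions of Proposition \ref{prop:inv} into a single family $(\pi,I):\calX\to\mk$.

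The one place where your justification is insufficient is the holomorphicity of $I$. You write that Global Torelli produces a unique biregular $i_t$ on each fibre and that ``the uniqueness forces these to patch into a holomorphic involution over $U$.'' Uniqueness only gives you a well-defined set-theoretic map $I_U:\calX_U\to\calX_U$; it does not by itself produce any regularity in the base direction. What is needed is an \emph{existence} statement for a holomorphic involution of the total space restricting to the $i_t$, and this is supplied by the local deformation theory of Section \ref{sec:localdef}: the involution $i$ of the central fibre extends holomorphically to an involution of the Kuranishi family over $\Def(X,i)$, and by uniqueness (Theorem \ref{thm:faithful}) this extension agrees fibrewise with the involutions coming from Proposition \ref{prop:inv}. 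This is exactly how the paper argues --- ``$I_U$ is holomorphic since it coincides with the universal deformation of $(X,i)$'' --- and the role of uniqueness is only to identify the two constructions and to guarantee agreement on overlaps $U\cap V$ (both for gluing $I$ and for gluing the universal families $\calX_U$ themselves into a family over all of $\mk$). With that one step repaired, your argument is the paper's proof.
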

\begin{proof}
Let $g:H^2(X_2,\Z)\to H^2(X_1,\Z)$ be a parallel transport operator with 
\[i_1^*\circ g=g\circ i_2^*,\qquad
g(\widetilde{\calK}_{X_2}^{i_2})=\widetilde{\calK}_{X_1}^{i_1}.\]
Let $\alpha_1:H^2(X_1,\Z)\to L_n$ be an admissible marking of $(X_1,i_1)$ and let 
\[\alpha_2:=\alpha_1\circ g:H^2(X_2,\Z)\to L_n.\]
For $j=1,2$, we have $(X_j,\alpha_j)\in\mk$, where 
\[\calK:=\alpha_1(\widetilde{\calK}_{X_1}^{i_1})=\alpha_2(\widetilde{\calK}_{X_2}^{i_2}).\] 
For every $(X,\alpha)\in\mk$ there exists an involution $i:X\to X$ such that 
\[i^*=\alpha^{-1}\circ \iota_M\circ\alpha\] by Proposition \ref{prop:inv}, which is unique by Theorem \ref{thm:faithful}. These involutions fit into a holomorphic family $(\pi,I):\calX\to \mk$. Indeed, let $U\subset\mk$ be a contractible open neighbourhood of $(X,\alpha)$ and 
\begin{align*}
\pi_U:\ &\calX_U\to U,\\ 
\alpha_U:\ &(R^2\pi_*\Z)|_U\to L_{U}
\end{align*}
be the universal family of marked manifolds defined over $U$. 
The involution $I_U:\calX_U\to\calX_U$, which is defined on each fibre as above, is holomorphic since it coincides with the universal deformation of $(X,i)$.
If $U,V$ are two such sets, we can glue $\calX_U$ and $\calX_V$ over $U\cap V$ and obtain a global family $\calX\to\mk$, since marked pairs do not admit non-trivial automorphisms by Theorem \ref{thm:faithful}. Moreover, the involutions $I_U$ and $I_V$ coincide over $U\cap V$, and we thus obtain a holomorphic involution $I:\calX\to\calX$ containing $(X_1,i_1)$ and $(X_2,i_2)$.
\end{proof}

\begin{thm} \label{thm:deftypes}
The map $\rho:\calM_M\to\kt(M)/\Gamma_M$ induces a bijection between deformation types of pairs of type $M$ and $\kt(M)/\Gamma_M$.
\end{thm}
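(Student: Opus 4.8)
The plan is to prove that $\rho$ descends to a bijection by establishing two things: that $\rho$ is constant on deformation classes and separates distinct ones (injectivity), and that every element of $\kt(M)/\Gamma_M$ is attained (surjectivity). Both directions should follow from results already assembled in the preceding sections, so the work is largely bookkeeping rather than new geometry.

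For injectivity, I would argue as follows. First, $\rho$ is well-defined on $\calM_M$ (already noted before the statement) and takes values in $\kt(M)/\Gamma_M$. By Proposition \ref{prop:definv}, the isometry class of the stable invariant Kähler cone is a deformation invariant, which means $\rho$ is constant on each deformation type; hence $\rho$ induces a well-defined map from deformation types to $\kt(M)/\Gamma_M$. Conversely, if two pairs $(X_1,i_1)$ and $(X_2,i_2)$ have the same image under $\rho$, then by definition there are admissible markings $\alpha_1,\alpha_2$ with $[\alpha_1(\widetilde{\calK}_{X_1}^{i_1})]=[\alpha_2(\widetilde{\calK}_{X_2}^{i_2})]$ in $\kt(M)/\Gamma_M$. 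Unwinding the $\Gamma_M$-action and lifting the identification to a parallel transport operator $g$ commuting with the involutions and carrying one stable invariant Kähler cone to the other, I obtain precisely the hypothesis $\widetilde{\calK}_{X_1}^{i_1}\cong\widetilde{\calK}_{X_2}^{i_2}$ of Theorem \ref{thm:def}, which then yields deformation equivalence. Thus $\rho$ separates deformation types, giving injectivity of the induced map.

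For surjectivity, fix a Kähler-type chamber $\calK\in\kt(M)$. By Lemma \ref{lem:image} the restricted period map $P_\calK:\mk\to\Omega_\Mp^0$ is surjective, and in particular $\mk$ is nonempty. Choosing any $(X,\alpha)\in\mk$, Proposition \ref{prop:inv} produces a non-symplectic involution $i:X\to X$ with $i^*=\alpha^{-1}\circ\iota_M\circ\alpha$, so $(X,i)$ is a pair of type $M$. Since $(X,\alpha)\in\mk$ means $\calK$ meets $\alpha(\kx)$, and the invariant Kähler cone determines the stable invariant Kähler cone, one gets $\alpha(\widetilde{\calK}_X^i)=\calK$, whence $\rho(X,i)=[\calK]$. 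This realizes every chamber class, giving surjectivity.

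The main obstacle, or at least the point requiring the most care, is the injectivity step: one must verify that equality of the classes $[\alpha_1(\widetilde{\calK}_{X_1}^{i_1})]$ and $[\alpha_2(\widetilde{\calK}_{X_2}^{i_2})]$ in the quotient $\kt(M)/\Gamma_M$ can be promoted to an honest \emph{parallel transport} operator $g$ satisfying both $i_1^*\circ g=g\circ i_2^*$ and $g(\widetilde{\calK}_{X_2}^{i_2})=\widetilde{\calK}_{X_1}^{i_1}$, as required to apply Theorem \ref{thm:def}. The subtlety is that an element $\sigma\in\Gamma_M\subset O(M)$ identifying the two chambers is a priori only defined on $M$; one must lift it to an element of $\Gamma(M)\subset\mon^2(L_n)$ (via the defining surjection $\Gamma(M)\to\Gamma_M$) and then transport it back through the markings to $H^2(X_j,\Z)$, checking that the resulting isometry commutes with the $i_j^*$ (which it does, since $\sigma$ commutes with $\iota_M$ by membership in $\Gamma(M)$) and is genuinely a parallel transport operator (which it is, since both marked pairs lie in the same component $\frakM_{L_n}^0$). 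Once this identification is in place, everything reduces to the already-proven Theorem \ref{thm:def} and Proposition \ref{prop:definv}.
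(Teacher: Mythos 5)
Your proposal is correct and follows essentially the same route as the paper: injectivity is exactly the combination of Proposition \ref{prop:definv} and Theorem \ref{thm:def} (together with the observation, already made in the paper, that equality in $\kt(M)/\Gamma_M$ lifts to a parallel transport operator commuting with the involutions), and surjectivity uses Lemma \ref{lem:image} and Proposition \ref{prop:inv} in the same way. The only point you compress slightly is the final step of surjectivity, where the paper makes explicit that $i^*$ acts trivially on $\alpha^{-1}(\calK)$, so $\alpha^{-1}(\calK)\cap\calK_X^i=\alpha^{-1}(\calK)\cap\calK_X\neq\emptyset$ and hence $\alpha(\widetilde{\calK}_X^i)=\calK$.
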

\begin{proof}
By Theorem \ref{thm:def}, it only remains to show surjectivity. Let $\calK\in\kt(M)$ be a Kähler-type chamber. By Lemma \ref{lem:image}, the set $\mk$ is non-empty, and by Proposition \ref{prop:inv}, for any $(X,\alpha)\in\mk$ there exists an involution $i:X\to X$ with $i^*=\alpha^{-1}\circ\iota_M\circ\alpha$. The isometry $i^*$ acts trivially on $\alpha^{-1}(\calK)$, which implies $\alpha^{-1}(\calK)\cap\kx^i=\alpha^{-1}(\calK)\cap\kx$. Since $(X,\alpha)\in\mk$, this intersection is non-empty,
and since $\kx^i\subset\sik$, this shows $\alpha(\sik)=\calK$.
\end{proof}

\begin{exm} \label{exm:comp}
We apply Theorem \ref{thm:deftypes} to an example given by Ohashi--Wandel \cite{ohashiwandel}.
Let $\pi:S\to\Pb^2$ be a K3 surface which is a double plane branched over a smooth sextic. Let $i:S\to S$ be the covering involution
and $i^{[2]}:S^{[2]}\to S^{[2]}$ the natural involution. 
The fixed locus of $i^{[2]}$ contains the plane
\[P:=\{[s,i(s)]\in S^{[2]}:s\in S\}\cong S/i\cong\Pb^2.\]
The authors consider the Mukai flop $f:S^{[2]}\dashrightarrow X$ obtained by replacing $P$ by the dual projective plane $P^*=|\calO_P(1)|$ and show that the induced birational involution 
\[j:=f\circ i^{[2]}\circ f^{-1}:X\to X\]
is biregular.
The invariant lattice of $i$ is isometric to $\langle2\rangle$ and therefore
that of the natural
involution $i^{[2]}$ is given by
\[H^2(S^{[2]},\Z)^{i^{[2]}}=\Z h\oplus\Z e\cong\langle2\rangle\oplus\langle-2\rangle,\]
where $e$ is half the class of the exceptional divisor and $h$ is the image of a primitive invariant ample class on $S$ under the natural map 
$H^2(S,\Z)\hookrightarrow H^2(S^{[2]},\Z)$.
Hence $(S^{[2]},i^{[2]})$ is of type
\[M:=\varepsilon(\langle2\rangle)\oplus\langle-2\rangle\subset L_2,\]
where
\[\varepsilon:L_{K3}\hookrightarrow L_2=L_{K3}\oplus\langle-2\rangle\]
is the natural inclusion. Ohashi and Wandel show that every pair of type $M$ can be deformed into $(S^{[2]},i^{[2]})$ or into $(X,j)$.

The invariant wall divisors of $(S^{[2]},i^{[2]})$ are given by 
\[\Delta^{i^{[2]}}(S^{[2]})=\pm\{e,2h+3e,2h-3e\}\]
and divide the invariant positive cone of $(S^{[2]},i^{[2]})$ into four chambers shown in Figure~\ref{fig:comp}.

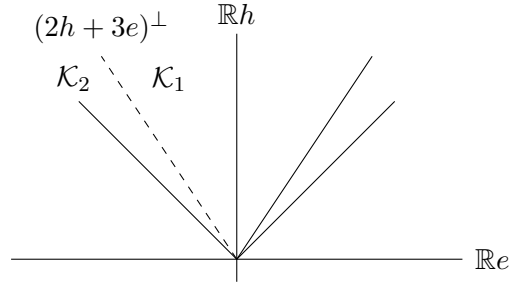
\begin{figure}[h!]
\begin{center}
\begin{tikzpicture}[scale=.3]
\draw (-10,0) -- (10,0) node[right=1pt] {$\R e$};
\draw (0,-1) -- (0,10) node[anchor=south] {$\R h$};
\draw (-7,7) -- (0,0) -- (7,7);
\draw[dashed] (0,0) -- (-6,9) node[anchor=north east] {$\calK_2$};
\draw (0,0) -- (6,9);
\draw (-3,8) node {$\calK_1$};
\draw (-6,4);
\draw (-6, 9) node[above=1pt] {$(2h+3e)^\perp$};
\end{tikzpicture}
\caption{Decomposition of the invariant positive cone}
\label{fig:comp}
\end{center}
\end{figure}

By a result of Bayer--Macr\`i \cite[Lemma 13.3]{bayermacri}, the invariant Kähler cone of $S^{[2]}$ is equal to $\calK_1$.
On the other hand, by the proof of \cite[Cor. 2.11]{ohashiwandel}, we have $\calK_2=f^*\kx^j$. 

Since any isometry of $\Z h\oplus\Z e\cong\langle2\rangle\oplus\langle-2\rangle$ maps $h$ to $\pm h$ and $e$ to $\pm e$, the cones $\calK_1$ and $\calK_2$ are not isometric, and hence the pairs $(S^{[2]},i^{[2]})$ and $(X,j)$ are not deformation equivalent.
This answers a question from \cite{ohashiwandel}. There are exactly two deformation types of pairs of type $M$, one of which contains all natural involutions. 

\end{exm}

%%%%%%%%%%%%%%
%   MODULI   %
%%%%%%%%%%%%%%

\section{Moduli spaces} \label{sec:mod}

By Theorem \ref{thm:deftypes}, the deformation equivalence classes of $\calM_M$ are given by
\[\calM_{M,\calK}:=\{(X,i)\in\calM_M:\rho(X,i)=[\calK]\},\]
where $[\calK]\in\kt(M)/\Gamma_M$. In this section, we want to replace the period map $P_M$ by a finer period map $P_{M,\calK}$ which maps $\calM_{M,\calK}$ generically injectively onto a quasi-projective variety. For the rest of this section, we fix a representative $\calK$ of $[\calK]$ and denote by $\Omega_\Mp^+$ the connected component determined by $\frakM_{L_n}^0$ and $\calK$.

\begin{dfn}
Let $(X,i)\in\calM_{M,\calK}$. A marking $\alpha:H^2(X,\Z)\to L_n$ is called \emph{admissible for $\calK$}, if it is admissible for $M$ and furthermore satisfies $\alpha(\sik)=\calK$.
\end{dfn}

By definition of $\calM_{M,\calK}$, for any pair $(X,i)\in\calM_{M,\calK}$ there exists a marking which is admissible for $\calK$. 
Moreover, any two such markings differ by an element of 
\[\Gamma(\calK):=\{\sigma\in\Gamma(M):\sigma(\calK)=\calK\}\subset O(L_n).\]

Let $\Gamma_{\Mp,\calK}$ be the image of the restriction homomorphism $\Gamma(\calK)\to O(\Mp)$.
Since 
\[\Gamma(\calK)\subset\mon^2(L_n)\subset O^+(L_n),\] and since the image of $\Gamma(\calK)\to O(M)$ is contained in $O^+(M)$, we have
\[\Gamma_{\Mp,\calK}\subset O^+(\Mp),\] 
where $O^+(\Mp)$ is the subgroup of isometries with real spinor norm $+1$, or equivalently, the subgroup of isometries preserving $\Omega_\Mp^+$.

\begin{prop}
$\Gamma_{\Mp,\calK}$ is a finite index subgroup of $O^+(\Mp)$.
\end{prop}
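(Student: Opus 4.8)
The plan is to realize $\Gamma_{\Mp,\calK}$ as a subgroup of $O^+(\Mp)$ that sandwiches a subgroup already known to have finite index, namely the positive part of the stable isometry group. Concretely, I will establish the inclusions
\[
\so(\Mp)\cap O^+(\Mp)\ \subseteq\ \Gamma_{\Mp,\calK}\ \subseteq\ O^+(\Mp).
\]
Since $A_\Mp$ is finite, $\so(\Mp)\subset O(\Mp)$ has finite index, and as $O^+(\Mp)\subset O(\Mp)$ has index two, the group $\so(\Mp)\cap O^+(\Mp)$ is a finite index subgroup of $O^+(\Mp)$; the sandwich then yields the claim at once. The right-hand inclusion was established just before the statement, so the substance of the proof is the left-hand one.

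To prove the left-hand inclusion, fix $\psi\in\so(\Mp)\cap O^+(\Mp)$. Because $M=(L_n)^{\iota_M}$ is a fixed lattice, it is primitive, so $(\Mp)^\perp=M$. Applying Lemma \ref{lem:stableisometry} with $S=\Mp$ produces an extension $\sigma\in\so(L_n)$ with $\sigma|_{M}=\id_M$; thus $\sigma$ acts on $L_{n,\R}=M_\R\oplus\Mp_\R$ as $\id_{M_\R}\oplus\psi$. The key verification is that $\sigma\in\mon^2(L_n)$. On one hand, $\sigma\in\so(L_n)$ acts trivially on the discriminant group, so its image in $O(L_n^*/L_n)$ is $+1$. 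On the other hand, the real spinor norm is multiplicative over the orthogonal decomposition $M_\R\perp\Mp_\R$, and since $\id_{M_\R}$ has trivial spinor norm while $\psi\in O^+(\Mp)$, we conclude $\sigma\in O^+(L_n)$. By Lemma \ref{lem:monodromy}, $\mon^2(L_n)$ is exactly the preimage of $\{-1,1\}$ under $O^+(L_n)\to O(L_n^*/L_n)$, hence $\sigma\in\mon^2(L_n)$.

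It remains to check that $\sigma\in\Gamma(\calK)$. Since $\sigma|_M=\id_M$ preserves $M$, it commutes with $\iota_M$, so $\sigma\in\Gamma(M)$; and as $\sigma$ fixes $M_\R$ pointwise, it fixes the chamber $\calK\subset M_\R$, so $\sigma\in\Gamma(\calK)$. Therefore $\psi=\sigma|_{\Mp}\in\Gamma_{\Mp,\calK}$, which proves the inclusion and hence the proposition.

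The one subtle point---and the reason the statement is phrased for $O^+(\Mp)$ rather than $O(\Mp)$---is the sign ambiguity in Lemma \ref{lem:monodromy}: for a general extension one only knows that $\sigma$ \emph{or} $-\sigma$ lies in $\mon^2(L_n)$, and $-\sigma$ sends $\calK$ to $-\calK$, destroying the chamber-preserving property needed for membership in $\Gamma(\calK)$. Restricting to isometries $\psi$ of spinor norm $+1$ is precisely what forces the correct sign $\sigma\in O^+(L_n)\cap\mon^2(L_n)$ without passing to $-\sigma$, and this is where the whole argument hinges.
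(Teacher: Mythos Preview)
Your proof is correct and follows essentially the same route as the paper: both establish the inclusion $\widetilde{O}^+(\Mp)\subset\Gamma_{\Mp,\calK}$ by extending a stable isometry of $\Mp$ trivially across $M$ via Lemma~\ref{lem:stableisometry} and then invoking Lemma~\ref{lem:monodromy}. Your version is more explicit on the one point the paper glosses over---why the extension $\sigma$ lands in $O^+(L_n)$ rather than merely in $\widetilde{O}(L_n)$---and your spinor-norm argument and closing remark about the sign ambiguity are a genuine clarification of why the hypothesis $\psi\in O^+(\Mp)$ is needed.
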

\begin{proof}
By Lemma \ref{lem:stableisometry}, any isometry $\sigma\in \widetilde{O}^+(\Mp)$ extends to an isometry $\widetilde{\sigma}\in\widetilde{O}(L_n)$ with $\widetilde{\sigma}|_M=\id_M\in O^+(M)$ and hence $\widetilde{\sigma}\in O^+(L_n)$.
Using Lemma \ref{lem:monodromy}, this implies $\widetilde{\sigma}\in\mon^2(L_n)$ and consequently $\widetilde{\sigma}\in\Gamma(M)$. Since $\widetilde{\sigma}$ acts trivially on $M$, we have $\widetilde{\sigma}\in\Gamma(\calK)$ and therefore $\sigma\in\Gamma_{\Mp,\calK}$. This shows that $\widetilde{O}^+(\Mp)\subset\Gamma_{\Mp,\calK}$ and thus that $\Gamma_{\Mp,\calK}\subset O^+(\Mp)$ has finite index.
\end{proof}

As before, the quotient $\Omega^+_{M^\perp}/\Gamma_{\Mp,\calK}$ is a quasi-projective variety, and we have a well-defined map
\begin{equation} \label{eq:period2}
\begin{aligned} 
P_{M,\calK}:\calM_{M,\calK}&\ \to\ \Omega^+_\Mp/\Gamma_{\Mp,\calK}\\
(X,i)&\ \mapsto\ [P(X,\alpha)],
\end{aligned}
\end{equation}
where $\alpha:H^2(X,\Z)\to L_n$ is any marking which is admissible for $\calK$.

Assume that $(\pi,I):\calX\to T$ is a deformation of $(X,i)=(\pi^{-1}(0),I_0)$. Let $U\subset T$ be a contractible open neighbourhood of $0$ and $\alpha:(R^2\pi_*\Z)|_U\to L_U$ a trivialization such that $\alpha_0$ is admissible for $\calK$. Then for every $t\in U$ the marking $\alpha_t$ is admissible for $\calK$, as shown in the proof of Proposition \ref{prop:definv}. Since the ordinary period map is holomorphic, by \cite[Thm. 3.27]{schmid} this shows that the induced map 
\begin{align*}
T&\ \to\ \Omega^+_\Mp/\Gamma_{\Mp,\calK}\\
t&\ \mapsto\ P_{M,\calK}(X_t,I_t)
\end{align*}
is holomorphic.

Our goal is to show that the map $P_{M,\calK}$ is generically injective. However, we will see in Example \ref{exm:nonsep} that in general $P_{M,\calK}$ is not injective and $\calM_{M,\calK}$ does not admit a structure as a Hausdorff moduli space.
We therefore restrict to the following class of pairs $(X,i)$ in order to obtain a quasi-projective (and in particular Hausdorff) moduli space.

\begin{dfn}
A pair $(X,i)$ of type $M$ is called \emph{simple}, if $\sik=\kx^i$.
\end{dfn}

Let
\[\Delta(\calK):=\{\dd\in\Dl:\dd^\perp\cap\calK\neq\emptyset\}.\]
Since $\Delta(\calK)\subset\Dl$, it follows from Lemma \ref{lem:discrloc}, that the collection of hyperplanes 
\[\{\dd^\perp\subset\Omega^+_\Mp,\ \dd\in\Delta(\calK)\}\] 
is locally finite and thus their union
\[\widetilde{\calD}_\calK:=\bigcup_{\dd\in\Delta(\calK)}\dd^\perp\subset\Omega^+_\Mp\]
is closed. Furthermore, $\widetilde{\calD}_\calK$ is invariant under $\Gamma_{\Mp,\calK}$ and the quotient
\[\calD_\calK:=\widetilde{\calD}_\calK/\Gamma_{\Mp,\calK}\subset\Omega^+_{M^\perp}/\Gamma_{\Mp,\calK}\]
is Zariski-closed. Hence 
\[\calM_{M,\calK}^0:=(\Omega_\Mp^+/\Gamma_{\Mp,\calK})\setminus(\calD_M\cup\calD_\calK)\]
is a quasi-projective variety. 

\begin{prop} \label{prop:simple}
Let $(X,i)$ be of deformation type $\calK$. Then $(X,i)$ is simple if and only if $P_{M,\calK}(X,i)\not\in\calD_\calK$.
\end{prop}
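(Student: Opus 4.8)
The plan is to fix a marking $\alpha\colon\hxz\to L_n$ that is admissible for $\calK$, so that $\alpha(\sik)=\calK$ and $P(X,\alpha)\in\Omega_\Mp^+$. Since $\widetilde{\calD}_\calK$ is invariant under $\Gamma_{\Mp,\calK}$ and $P_{M,\calK}(X,i)=[P(X,\alpha)]$, the condition $P_{M,\calK}(X,i)\in\calD_\calK$ is equivalent to $P(X,\alpha)\in\widetilde{\calD}_\calK$. I thus reduce the statement to the claim that $(X,i)$ is simple, i.e. $\sik=\kx^i$, if and only if $P(X,\alpha)\notin\widetilde{\calD}_\calK$. The whole argument is a refinement of the proof of Proposition \ref{prop:stableperiod}(ii): there the relevant hyperplanes were only required to meet the positive cone $\widetilde{\calC}_M$, whereas the set $\Delta(\calK)$ records precisely those $\dd$ whose orthogonal complement meets the chamber $\calK=\alpha(\sik)$, and this sharpening is exactly what upgrades the one-sided implication into an equivalence.

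For the forward direction, suppose $(X,i)$ is not simple, so that $\kx^i\subsetneq\sik$. Exactly as in the proof of Proposition \ref{prop:stableperiod}(ii), this produces a wall divisor $D\in\Delta(X)\setminus\Delta^i(X)$ with $D^\perp\cap\sik\neq\emptyset$. Put $\dd:=\alpha(D)$. Then $\dd\in\Delta(L_n)$ by Proposition \ref{prop:newample}, $\dd\notin M$ since $D$ is not invariant, and $\dd\notin\Mp$ by the invariant-ample-class argument of Proposition \ref{prop:stableperiod}(i) (a wall divisor cannot be orthogonal to an invariant ample class, which lies in the interior of a Kähler-type chamber). Moreover $P(X,\alpha)\in\dd^\perp\cap\Omega_\Mp$ because $(D,\omega)=0$, and $\alpha(D^\perp\cap\sik)=\dd^\perp\cap\calK\subset\dd^\perp\cap\widetilde{\calC}_M$ is nonempty. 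Lemma \ref{lem:delta} then gives $\dd\in\Dp$, hence $\dd\in\Dl$, and the condition $\dd^\perp\cap\calK\neq\emptyset$ shows $\dd\in\Delta(\calK)$. Therefore $P(X,\alpha)\in\dd^\perp\subset\widetilde{\calD}_\calK$, as desired.

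For the converse, suppose $P(X,\alpha)\in\dd^\perp$ for some $\dd\in\Delta(\calK)$, and set $D:=\alpha^{-1}(\dd)$. Since $(D,\omega)=0$ and $D$ is integral, \eqref{eq:ns} gives $D\in H^{1,1}(X,\Z)$; as $\dd\in\Dl\subset\Delta(L_n)$, Proposition \ref{prop:newample}(i) yields $D\in\Delta(X)$. Because $\dd\in\Dp$ satisfies $\dd\notin M$ by Lemma \ref{lem:delta}, the class $D$ is not invariant, so $D\in\Delta(X)\setminus\Delta^i(X)$. Pulling back $\dd^\perp\cap\calK\neq\emptyset$ through $\alpha$ gives $D^\perp\cap\sik\neq\emptyset$. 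The main point is now to conclude that this truncates the stable invariant Kähler cone: since $\dd\notin\Mp$, the hyperplane $D^\perp$ meets $\hxz^i\otimes\R$ in a proper hyperplane, which therefore cuts the open cone $\sik$ into pieces; as $\kx^i\subset\sik$ avoids the orthogonal complement of every wall divisor, $\kx^i$ is contained in one of these pieces and hence $\kx^i\subsetneq\sik$, so $(X,i)$ is not simple.

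I expect the delicate step to be this last transversality argument in the converse: verifying that a non-invariant wall divisor whose hyperplane meets $\sik$ genuinely shrinks it. This is where the hypothesis $\dd\notin\Mp$ is essential, since it guarantees that $D^\perp$ does not contain the invariant subspace and hence meets the open cone $\sik$ in a proper slice; it is also precisely the reason, noted in the remark after Proposition \ref{prop:stableperiod}, that tracking $\dd^\perp\cap\calK$ rather than $\dd^\perp\cap\widetilde{\calC}_M$ is necessary in order to obtain the equivalence rather than merely one implication.
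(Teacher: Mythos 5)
Your proof is correct and follows essentially the same route as the paper: the forward direction reuses the argument of Proposition \ref{prop:stableperiod}(ii) and merely observes that the wall divisor produced there satisfies $\dd^\perp\cap\calK\neq\emptyset$, hence $\dd\in\Delta(\calK)$; the converse pulls $\dd\in\Delta(\calK)$ back to a non-invariant wall divisor $D\in\Delta(X)\setminus\Delta^i(X)$ whose hyperplane meets $\sik$ while $\kx^i$ avoids it, forcing $\kx^i\subsetneq\sik$. The only difference is that you spell out the final step of the converse (which the paper leaves implicit), and your elaboration is sound.
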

\begin{proof}
Let $\alpha:H^2(X,\Z)\to L_n$ be a marking which is admissible for $\calK$.
If $(X,i)$ is not simple, then in the proof of Proposition \ref{prop:stableperiod} it was shown that there exists a wall divisor $D\in\Delta(X)$ with $D^\perp\cap\sik\neq\emptyset$ such that $P(X,\alpha)\in\dd^\perp$ where $\dd:=\alpha(D)\in\Dl$. This shows in fact that $\dd\in\Delta(\calK)$, and hence one implication.

Conversely, assume that $P(X,\alpha)\in\dd^\perp$, where $\dd\in\Delta(\calK)$. Then the class $D:=\alpha^{-1}(\dd)\in\Delta(X)$ is a wall divisor with $D^\perp\cap\sik\neq\emptyset$, which implies that $(X,i)$ is not simple.
\end{proof}

\begin{thm} \label{thm:modnat}
$\calM_{M,\calK}^0$ is a coarse moduli space for simple pairs of type $M$ and of deformation type $[\calK]$.
\end{thm}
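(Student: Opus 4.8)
The plan is to check the two defining properties of a coarse moduli space for the functor $\calF$ that assigns to a complex space $T$ the set of isomorphism classes of families $(\pi,I)\colon\calX\to T$ all of whose fibres are simple pairs of type $M$ and deformation type $[\calK]$: first, that the period map induces a bijection between $\calF(\mathrm{pt})$ and the $\C$-points of $\calM_{M,\calK}^0$; and second, that $\calM_{M,\calK}^0$ is universal for natural transformations out of $\calF$. The central object will be the tautological family $(\pi,I)\colon\calX\to\mk$ constructed in the proof of Theorem \ref{thm:def}, restricted to the open locus
\[\mk^s:=\pk^{-1}(\Omega^s),\qquad \Omega^s:=\Omega_\Mp^+\setminus(\widetilde{\calD}_M\cup\widetilde{\calD}_\calK),\]
whose fibres are exactly the simple pairs by Proposition \ref{prop:simple}. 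I would also use throughout that $\pk\colon\mk\to\Omega_\Mp^0$ is a surjective local isomorphism (Lemma \ref{lem:image} together with Local Torelli), so that its restriction $\mk^s\to\Omega^s$ is a surjective local isomorphism as well.

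For the point bijection I would argue as follows. Surjectivity: a $\C$-point of $\calM_{M,\calK}^0$ lifts to some $\eta\in\Omega^s$; by Lemma \ref{lem:image} there is $(X,\alpha)\in\mk$ with $P(X,\alpha)=\eta$, and Proposition \ref{prop:inv} produces an involution $i$ for which $\alpha$ is admissible for $\calK$, so that $(X,i)\in\calM_{M,\calK}$; since $\eta\notin\widetilde{\calD}_\calK$, Proposition \ref{prop:simple} shows $(X,i)$ is simple. Injectivity: if $(X_1,i_1)$ and $(X_2,i_2)$ are simple and have the same image, choose markings $\alpha_1,\alpha_2$ admissible for $\calK$ and use that $\Gamma_{\Mp,\calK}$ is the image of $\Gamma(\calK)$ to replace $\alpha_2$ by an equivalent marking with $P(X_1,\alpha_1)=P(X_2,\alpha_2)$ in $\Omega_\Mp^+$. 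Then $g:=\alpha_1^{-1}\circ\alpha_2$ is a Hodge isometry and a parallel transport operator, and simplicity gives $\alpha_j(\calK_{X_j}^{i_j})=\alpha_j(\widetilde{\calK}_{X_j}^{i_j})=\calK$, so $g$ carries the invariant Kähler cone of $(X_2,i_2)$ onto that of $(X_1,i_1)$ and in particular a Kähler class to a Kähler class. The Global Torelli theorem then yields $f\colon X_1\to X_2$ with $f^*=g$; a direct computation from $\alpha_j\circ i_j^*=\iota_M\circ\alpha_j$ gives $g\circ i_2^*=i_1^*\circ g$, and Theorem \ref{thm:faithful} upgrades this to $f\circ i_1=i_2\circ f$, an isomorphism of pairs.

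The natural transformation $\Phi\colon\calF\to\Hom(-,\calM_{M,\calK}^0)$ is provided by the assignment $t\mapsto P_{M,\calK}(X_t,I_t)$, which is holomorphic by holomorphicity of the ordinary period map together with \cite[Thm. 3.27]{schmid} (as already observed before the definition of simple), and whose image avoids $\calD_M$ by Proposition \ref{prop:stableperiod}(i) and $\calD_\calK$ by Proposition \ref{prop:simple} applied fibrewise, hence lands in $\calM_{M,\calK}^0$. For universality, let $N$ be a complex space and $\Psi\colon\calF\to\Hom(-,N)$ a natural transformation. Applying $\Psi$ to the tautological family over $\mk^s$ gives a holomorphic map $\psi\colon\mk^s\to N$. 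Compatibility of $\Psi$ with restriction to a point shows $\psi(X,\alpha)=\Psi_{\mathrm{pt}}(X,i)$, so $\psi$ depends only on the isomorphism class of the underlying pair; by the injectivity argument this class is determined by $\pk(X,\alpha)\in\Omega^s$, whence $\psi=\bar\psi\circ\pk$ for a set map $\bar\psi\colon\Omega^s\to N$.

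Because $\pk\colon\mk^s\to\Omega^s$ is a surjective local isomorphism, $\bar\psi$ is holomorphic; and since $\Gamma_{\Mp,\calK}$-translates of a period point yield isomorphic pairs, $\bar\psi$ is $\Gamma_{\Mp,\calK}$-invariant, so it descends to a map $\chi\colon\calM_{M,\calK}^0\to N$ with $\chi\circ\Phi=\Psi$; uniqueness of $\chi$ follows from the bijection on points. I expect the main obstacle to be precisely this last descent step: since $\Gamma_{\Mp,\calK}$ need not act freely, the quotient $\calM_{M,\calK}^0$ may be singular, and one must invoke that $\Gamma_{\Mp,\calK}$ acts properly discontinuously with normal quotient in order for the invariant holomorphic map $\bar\psi$ to descend holomorphically. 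By contrast, the point bijection is a fairly direct consequence of the Global Torelli theorem and Theorem \ref{thm:faithful}, once Lemma \ref{lem:image} and Propositions \ref{prop:inv} and \ref{prop:simple} are available.
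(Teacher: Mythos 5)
Your proof is correct and follows essentially the same route as the paper: surjectivity onto $\calM_{M,\calK}^0$ via Lemma \ref{lem:image}, Proposition \ref{prop:inv} and Proposition \ref{prop:simple}, and injectivity by lifting the identification of period points to an element of $\Gamma(\calK)$, using simplicity to identify $\calK_{X_j}^{i_j}$ with $\widetilde{\calK}_{X_j}^{i_j}$, and then applying the Global Torelli theorem together with Theorem \ref{thm:faithful}. The only difference is that you additionally spell out the corepresentability (universal property) of the coarse moduli space via descent of the tautological family over $\mk^s$, a standard verification that the paper leaves implicit after establishing the bijection on points and the holomorphicity of the classifying map.
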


\begin{proof}
Let $\eta\in\Omega_\Mp^0=\Omega^+_{\Mp}\setminus\widetilde{\calD}_M$. By Lemma \ref{lem:image} there exists a marked pair $(X,\alpha)\in\mk$ with $P(X,\alpha)=\eta$, and by Proposition \ref{prop:inv}, there exists a non-symplectic involution $i:X\to X$ with $i^*=\alpha^{-1}\circ\iota_M\circ\alpha$. In the proof of Theorem \ref{thm:deftypes} it was shown that $\alpha(\sik)=\calK$.
Together with Proposition \ref{prop:simple}, this shows that the period map $P_{M,\calK}$ restricts to a surjective map
\[P_{M,\calK}:\{(X,i)\in\calM_{M,\calK}: (X,i)\text{ is simple}\}\to\calM_{M,\calK}^0.\]
It remains to show that this map is injective.
Assume that $(X_1,i_1),(X_2,i_2)$ are two simple pairs with $P_{M,\calK}(X_1,i_1)=P_{M,\calK}(X_2,i_2)$. Let 
\[\alpha_j:H^2(X_j,\Z)\to L_n,\quad j=1,2\]
be markings that are admissible for $\calK$.
By assumption, there exists an isometry $\tau\in\Gamma(\calK)$ such that $\tau(P_0(X_2,\alpha_2))=P_0(X_1,\alpha_1)$. This means, that
\[g:=\alpha_1^{-1}\circ\tau\circ\alpha_2:H^2(X_2,\Z)\to H^2(X_1,\Z)\] is a Hodge isometry. Since $(X_1,\alpha_1),(X_2,\alpha_2)\in\frakM_{L_n}^0$ and $\tau\in\mon^2(L_n)$, it is a parallel transport operator. Furthermore, we have
\[g(\calK_{X_2}^{i_2})=g(\widetilde{\calK}_{X_2}^{i_2})=\widetilde{\calK}_{X_1}^{i_1}=\calK^{i_1}_{X_1}.\]
Since the invariant Kähler cones are non-empty, $g$ maps a Kähler class to a Kähler class, and 
by the Global Torelli theorem, there exists an isomorphism $f:X_1\to X_2$ with $f^*=g$. Moreover,
\begin{align*}
(i_2\circ f)^*&=f^*\circ i_2^*=\alpha_1^{-1}\circ\tau\circ\alpha_2\circ i_2^*=\alpha_1^{-1}\circ\tau\circ\iota_M\circ\alpha_2\\
&=\alpha_1^{-1}\circ\iota_M\circ\tau\circ\alpha_2=i_1^*\circ\alpha_1^{-1}\circ\tau\circ\alpha_2=i_1^*\circ f^*\\
&=(f\circ i_1)^*,
\end{align*}
which by Theorem \ref{thm:faithful} implies that $i_2\circ f=f\circ i_1$ and hence that  
\[f:(X_1,i_1)\isoarrow(X_2,i_2).\qedhere\]
\end{proof}

We will now describe the pairs $(X,i)$ that are not unique in their fibre with respect to $P_{M,\calK}$.

\begin{dfn}
Two non-isomorphic pairs $(X,i)$ and $(\widetilde{X},\tilde{i})$ are called \emph{inseparable}, if their universal deformations 
\[(\pi,I):\calX\to\Def(X,i),\quad(\widetilde{\pi},\widetilde{I}):\widetilde{\calX}\to\Def(\widetilde{X},\widetilde{i})\] (considered as germs) contain isomorphic fibres.
\end{dfn}

\begin{prop} \label{prop:insep}
Suppose that $(X_1,i_1)$ and $(X_2,i_2)$ are two non-isomorphic pairs of type $M$ and deformation type $\calK$ with 
\[P_{M,\calK}(X_1,i_1)=P_{M,\calK}(X_2,i_2).\] Then $(X_1,i_1)$ and $(X_2,i_2)$ are inseparable.
\end{prop}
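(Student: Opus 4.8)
The plan is to deform both pairs slightly so that the non-invariant wall divisor distinguishing them disappears, turning them into \emph{simple} pairs sharing a common period, and then to invoke the injectivity of $P_{M,\calK}$ on simple pairs (Theorem \ref{thm:modnat}).

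First I would fix markings $\alpha_1,\alpha_2$ admissible for $\calK$ and, exactly as in the proof of Theorem \ref{thm:modnat}, extract from the hypothesis an isometry $\tau\in\Gamma(\calK)$ with $\tau(P_0(X_2,\alpha_2))=P_0(X_1,\alpha_1)=:\eta$. Since $(X_1,i_1)$ and $(X_2,i_2)$ are non-isomorphic yet have the same image under $P_{M,\calK}$, Theorem \ref{thm:modnat} forbids them both to be simple; as $P_{M,\calK}(X_1,i_1)=P_{M,\calK}(X_2,i_2)$, Proposition \ref{prop:simple} then forces $\eta\in\widetilde{\calD}_\calK$ and shows that in fact both pairs are non-simple. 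The underlying reason the Global Torelli theorem cannot be applied directly is that $g:=\alpha_1^{-1}\circ\tau\circ\alpha_2$ is a parallel-transport Hodge isometry carrying $\widetilde{\calK}^{i_2}_{X_2}$ to $\widetilde{\calK}^{i_1}_{X_1}$ and intertwining $i_1^\ast,i_2^\ast$, but not mapping the strictly smaller cone $\calK^{i_2}_{X_2}$ to $\calK^{i_1}_{X_1}$.

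Next I would pass to the universal deformations. By the local deformation theory of Section \ref{sec:localdef}, after extending $\alpha_j$ to an admissible trivialization the period map identifies the germ $(\Def(X_j,i_j),0)$ with an open neighbourhood $U_j$ of $P_0(X_j,\alpha_j)$ in $\Omega_\Mp$, over which the Kuranishi family gives a holomorphic family of pairs of type $M$; by Proposition \ref{prop:definv} every fibre again has deformation type $\calK$, so the extended markings remain admissible for $\calK$ (as in the proof of Proposition \ref{prop:definv}). The isometry $\tau$ identifies a neighbourhood of $\eta$ inside $U_1$ with one inside $\tau(U_2)$. Since $\eta\in\Omega_\Mp^0$ by Proposition \ref{prop:stableperiod}(i) and $\widetilde{\calD}_M\cup\widetilde{\calD}_\calK$ is a locally finite union of complex hyperplanes by Lemma \ref{lem:discrloc}, I can choose a period point $\eta'$ arbitrarily close to $\eta$ lying in $U_1\cap\tau(U_2)$ but off $\widetilde{\calD}_M\cup\widetilde{\calD}_\calK$; let $t_1\in\Def(X_1,i_1)$ and $t_2\in\Def(X_2,i_2)$ be the points with periods $\eta'$ and $\tau^{-1}(\eta')$.

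Finally, by Lemma \ref{lem:image} these periods lie in the image $\Omega_\Mp^0$ of $P_\calK$ on both sides, and by Proposition \ref{prop:simple} both fibres $(X_{1,t_1},I_{1,t_1})$ and $(X_{2,t_2},I_{2,t_2})$ are simple; as $\tau\in\Gamma(\calK)$ restricts to an element of $\Gamma_{\Mp,\calK}$, they share the image $[\eta']$ under $P_{M,\calK}$. Theorem \ref{thm:modnat} then produces an isomorphism of these two fibres \emph{as pairs}, and since $t_1,t_2$ lie in arbitrarily small neighbourhoods of the origin, the two germs contain isomorphic fibres, i.e. $(X_1,i_1)$ and $(X_2,i_2)$ are inseparable. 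The main obstacle I anticipate is the bookkeeping of the third step: verifying that the extended markings stay admissible for $\calK$ along each deformation—so that $P_{M,\calK}$ of the deformed fibres is genuinely computed by $\eta'$ and $\tau^{-1}(\eta')$—and that a generic nearby period simultaneously lands in the image of $P_\calK$ and makes \emph{both} sides simple, which is precisely where Lemmas \ref{lem:stableopen}, \ref{lem:discrloc} and \ref{lem:image} together with Proposition \ref{prop:definv} are needed.
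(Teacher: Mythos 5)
Your proposal is correct and follows essentially the same route as the paper: identify both local deformation spaces with open neighbourhoods of a common period point in $\Omega_\Mp$ (the paper simply absorbs your $\tau$ into the marking $\alpha_2$ so the periods coincide on the nose), pick a nearby period off $\widetilde{\calD}_\calK$, and apply the injectivity of $P_{M,\calK}$ on simple pairs from Theorem \ref{thm:modnat} to the two fibres. The extra observations (that both pairs are necessarily non-simple, and the removal of $\widetilde{\calD}_M$) are harmless but not needed, since points of $\Def(X_j,i_j)$ automatically avoid $\widetilde{\calD}_M$ by Proposition \ref{prop:stableperiod}(i).
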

\begin{proof}
By the assumption $P_{M,\calK}(X_1,i_1)=P_{M,\calK}(X_2,i_2)$, there exist admissible markings $\alpha_1:H^2(X_1,\Z)\to L_n$ and $\alpha_2:H^2(X_2,\Z)\to L_n$ with 
\[\eta:=P_0(X_1,\alpha_1)=P_0(X_2,\alpha_2).\] These induce embeddings 
\[\Def(X_1,i_1),\,\Def(X_2,i_2)\subset\Omega_\Mp,\qquad j=1,2\] as open neighbourhoods of $\eta$. Let $\eta'$ be any point inside the open subset 
\[(\Def(X_1,i_1)\cap\Def(X_2,i_2))\setminus\widetilde{\calD}_\calK.\]
By Theorem \ref{thm:modnat}, there is a unique pair $(X',i')$ which is in the fibre over $\eta'$ in the universal deformation of both $(X_1,i_1)$ and $(X_2,i_2)$.
\end{proof}

We will now consider an example where $\calK_X^i$ is strictly smaller than $\sik$, and the chambers of $\sik$ correspond to the invariant Kähler cones of inseparable birational models.

\begin{exm} \label{exm:nonsep}
Let $n=2$ and $e_1,e_2$ and $f_1,f_2$ be standard bases for the first two hyperbolic planes of $L_2=3U\oplus2E_8(-1)\oplus\Z e$, where $(e,e)=-2$. We consider the involution acting by $\iota_M(e_i)=f_i$ on $2U$ and as $-1$ on its orthogonal complement $U\oplus 2E_8(-1)\oplus\Z e$. Then the invariant lattice and the coinvariant
lattice are given by
\begin{align*}
M&=\Z(e_1+f_1)+\Z(e_2+f_2)\cong U(2)\\
M^\perp&=(\Z(e_1-f_1)+\Z(e_2-f_2))\oplus U\oplus 2E_8(-1)\oplus\Z e\\
&\cong U(2)\oplus U\oplus 2E_8(-1)\oplus\langle-2\rangle.
\end{align*}
Let $\delta:=2e_1-2e_2+e$. We have $(\delta,\delta)=-10,\ (\delta,L_2)=2\Z$ and hence $\delta\in\Delta(L_2)$.
Moreover, we can write $\delta=\delta_M+\delta_\Mp$ with
\[
\begin{array}{lll}
\delta_M&=e_1+f_1-e_2-f_2&\in M\\
\delta_\Mp&=e_1-f_1-e_2+f_2+e&\in\Mp.
\end{array}
\]
Since $(\delta_M,\delta_M)=-4<0$ and $(\delta_\Mp,\delta_\Mp)=-6<0$, we have $\delta\in\Delta_M(L_2)$. By Lemma \ref{lem:delta}, the intersection $\Omega_\Mp\cap\delta^\perp$ is non-empty and for a generic period point
$\eta\in\delta^\perp\subset\Omega_\Mp$, we have
\[
L_2\cap\eta^\perp=M+\Z\dd=M\oplus\Z\dd_\Mp\cong U(2)\oplus\langle-6\rangle\]
and in particular $\eta\not\in\calD_M$.

Using the surjectivity of the period map, let $(\widetilde{X},\widetilde{\alpha})\in\frakM_{L_2}^0$ be a marked manifold of $K3^{[2]}$-type with $P(\widetilde{X},\widetilde{\alpha})=\eta$. Let $\calC_M\subset\widetilde{\calC}_M$ be the connected component which is contained in $\widetilde{\alpha}(\calC_{\widetilde{X}})$, and 
let $\calC_1,\calC_2$ be the chambers of $\calC_M$ which are separated by $\delta^\perp$.
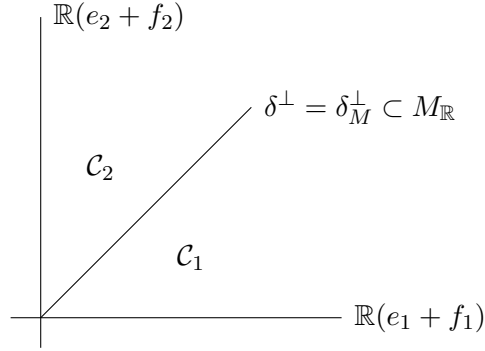
\begin{figure}[h] \label{fig:nonsep}
\begin{center}
\begin{tikzpicture}[scale=.4]
\draw (-1,0) -- (10,0) node[right=1pt] {$\R(e_1+f_1)$};
\draw (0,-1) -- (0,10) node[right=1pt] {$\R(e_2+f_2)$};
\draw (0,0) -- (7,7) node[right=1pt] {$\dd^\perp=\dd_M^\perp\subset M_\R$};
\draw (5,2) node {$\calC_1$};
\draw (2,5) node {$\calC_2$}; 
\end{tikzpicture}
\caption{Decomposition of the cone $\calC_M$}
\label{fig:cones}
\end{center}
\end{figure}
The fact that $\eta\not\in\calD_M$ implies that $\widetilde{\alpha}^{-1}(\calC_M)$ and hence $\widetilde{\alpha}^{-1}(\calC_j),\ j=1,2$ intersect Kähler-type chambers of $\widetilde{X}$ (for details, see the proof of Lemma \ref{lem:image}). By definition of Kähler-type chamber, 
there exist two marked manifolds $(X_1,\alpha_1),(X_2,\alpha_2)\in\frakM_{L_2}^0$ with $P(X_j,\alpha_i)=\eta$ and \[\emptyset\neq\alpha_j(\calK_{X_j})\cap M_\R\subset\calC_j.\] 
(In fact, one can see that equality holds, but we do not need this.) Since $M\cong U(2)$ does not contain elements of length $-2$ or $-10$, the cone $\calK:=\calC_M$ is a Kähler-type chamber of $M$ and $(X_j,\alpha_j)\in\frakM_{\Mp,\calK}$ for $j=1,2$.
By Proposition \ref{prop:inv}, for $j=1,2$ there exist non-symplectic involutions $i_j:X_j\to X_j$ such that $(X_j,i_j)\in\calM_{M,\calK}$ and $P_{M,\calK}(X_j,i_j)=[\eta]$. We will now show that $(X_1,i_1)$ and $(X_2,i_2)$ are not isomorphic. Proposition~\ref{prop:insep} then shows that $(X_1,i_1)$ and $(X_2,i_2)$ are inseparable.

Assume that $f:(X_1,i_1)\to (X_2,i_2)$ is an isomorphism and consider
\[\psi:=\alpha_1\circ f^*\circ\alpha_2^{-1}:L_2\to L_2.\]
We have $\psi\in\Gamma(M)$ and denote by $\psi_M\in O(M)$ and $\psi_\Mp\in O(\Mp)$ its restrictions. Since $\eta$ is generic and $f^*$ is a Hodge isometry, we have $\psi_\Mp=\pm\id_\Mp$ and in particular $\psi\in\widetilde{O}(\Mp)$.
By Proposition \ref{prop:embeddings}, the isomorphism $\gamma:H_M\to H_{\Mp}$
conjugates $\psi_{\Mp}|_{H_\Mp}$ to $\psi_M|_{H_M}$. On the other hand, since $f^*$ maps an invariant Kähler class to an invariant Kähler class, we have $\psi_M(\calC_1)=\calC_2$, and therefore $\psi_M$ acts non-trivially on $H_M=A_M$, which gives a contradiction.

Note that $\calK_{X_j}^{i_j}\subset\alpha_j^{-1}(\calC_j)\subsetneq\alpha_j^{-1}(\calC_M)=\widetilde{\calK}_{X_j}^{i_j},\ j=1,2$, which means that $(X_j,i_j)$ is not simple.
\end{exm}

The following example shows that the groups $\Gamma_{\Mp,\calK}$ can be different for different deformation classes $\calM_{M,\calK}$ of $\calM_{M}$.

\begin{exm} \label{exm:four}
We again consider a double plane $\pi:S\to \Pb^2$, this time branched over a sextic curve $C\subset\Pb^2$ with two nodes $Q,Q'\in C$. Let $i:S\to S$ be the covering involution and $i^{[2]}:S^{[2]}\to S^{[2]}$ the natural involution.
The invariant lattice of $i$ is generated by the class $c$ of a genus 2 curve which is the pullback of a line, and the classes $d,d'$ of the exceptional divisors obtained by blowing up $Q$ and $Q'$.
Therefore, the invariant lattice of the natural involution is given by
\begin{align*}
H^2(S^{[2]},\Z)^{i^{[2]}}
&=\Z c\oplus\Z d\oplus\Z d'\oplus\Z e\\
&\cong\langle2\rangle\oplus\langle-2\rangle\oplus\langle-2\rangle\oplus\langle-2\rangle,
\end{align*}
where $e$ is half the class of the exceptional divisor on $S^{[2]}$, and $H^2(S,\Z)$ is identified with its image in $H^2(S^{[2]},\Z)$. Hence $(S^{[2]}, i^{[2]})$ is of type $M$, where
\[M:=\varepsilon(\langle2\rangle\oplus\langle-2\rangle\oplus\langle-2\rangle)\oplus\langle-2\rangle\subset L_2\]
and
\[\varepsilon:L_{K3}\hookrightarrow L_2=L_{K3}\oplus\langle-2\rangle\]
is the natural inclusion.
We will implicitly identify $H^2(S^{[2]},\Z)^{i^{[2]}}$ with $M$.
Consider the set
\[\Delta:=\{d,\,d',\,e,\,c-d-d',\,c-d-e,\,c-d'-e\}\]
of  $-2$-classes. The polyhedron 
\[P:=\{x\in \calC_M: (\dd,x)\geq0\text{ for every }\dd\in\Delta\}/\R_{>0}\] is the convex hull of 
\[p_0=c,\ p_1=c-d,\ p'_1=c-d',\ p_2=c-e,\ p_3=2c-d-d'-e.\]
The only non-trivial isometry $\sigma\in\Gamma_M$ preserving $P$ is the involution given by $d\mapsto d'$. Indeed, such an isometry acts on the set of $p_i$, which are uniquely determined as primitive integral representatives of the vertices of $P$. We have 
$(p_0,p_0)=(p_3,p_3)=2$ and 
$(p_1,p_1)=(p'_1,p'_1)=(p_2,p_2)=0$.
Since $\sigma$ extends to $L_2$, we have $\sigma(e+2M)=e+2M$, which shows the claim.

For a $-2$-class $\dd\in M$ let $r_\dd\in\widetilde{O}^+(M)$ be the reflection in the hyperplane $\dd^\perp$ defined by $r_\dd(x)=x+(x,\dd)\dd$. 
Since there is no stable isometry preserving $P$, there is no $\dd\in M$ with $(\dd,\dd)=-2$ such that $\dd^\perp$ meets the interior of $P$. 
By \cite[Thm. 1.2]{vinberg}, the polyhedron $P$ is a fundamental domain for the action of the reflection group 
$\langle r_\dd:\dd\in\Delta\rangle\subset\widetilde{O}^+(M)\subset\Gamma_M$
on $\calC_M/\R_{>0}$.

In order to determine $\kt(M)/\Gamma_M$ it therefore suffices to 
consider elements $\dd\in M$ with $(\dd,\dd)=-10$ and $(\dd,L_2)=2\Z$ such that that the hyperplane $\dd^\perp$ meets the interior of $P$. We can assume that $(\dd,c)\geq 0$ and that there exists a $p\in\{p_1,p'_1,p_2,p_3\}$ with $(\dd,p)<0$. A simple calculation shows that 
\[\dd\in\{\pm(2d-e),\ \pm(2d'-e),\ 2c-3e,\ 2c-2d-2d'-e\}.\]
The corresponding hyperplanes divide $P$ into six polyhedra with vertices
\begin{align*}
P_1&=\{p_0,q_1,q_1',q_2,q_3\},&
P_2&=\{p_2,q_1,q_1',q_2,q_3\},&
P_3&=\{p_0,p_1,q_1,q_2\},\\
P_3'&=\{p_0,p'_1,q_1',q_2\},&
P_4&=\{p_0,p_1,p'_1,q_2\},&
P_5&=\{p_1,p'_1,p_3,q_2\},
\end{align*}
where 
\[q_1=3c-d-2e,\ q_1'=3c-d'-2e,\ q_2=3c-d-d'-2e,\ q_3=3c-2e.\]
We call two such polyhedra \emph{adjacent}, if they have a common face $\dd^\perp$ for some $-10$-class $\dd$.

\begin{figure}[h]
\begin{center}
\begin{tikzpicture}[scale=.7, baseline=-3pt]
\filldraw (0,0) circle (2pt) -- (-2,0) circle (2pt);
\filldraw (0,0) -- (2,1) circle (2pt) -- (4,0) circle (2pt) -- (6,0) circle (2pt);
\filldraw (0,0) -- (2,-1)  circle (2pt)-- (4,0);
\draw (-2,.5) node{$P_2$};
\draw (0,.5) node{$P_1$};
\draw (2,1.5) node{$P_3$};
\draw (2,-1.5) node{$P'_3$};
\draw (4,.5) node{$P_4$};
\draw (6,.5) node{$P_5$};
\end{tikzpicture}
\caption{Adjacency of Kähler-type chambers}
\end{center}
\end{figure}
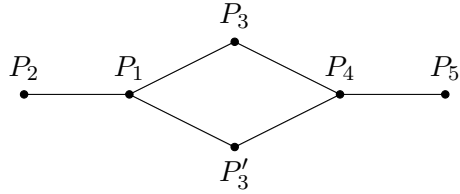

The involution $\sigma$ maps $P_3$ to $P_3'$ and fixes the other $P_i$. Hence the moduli space of simple pairs of type $M$ consists of four components which are Zariski-open subsets
\[\calM_{M,P_i}^0\subset\Omega^+_\Mp/\gmp^+,\ i\neq 3\]
 and one component 
\[\calM_{M,P_3}^0\subset\Omega^+_\Mp/\so^+(\Mp).\] 
Moreover, $\so^+(\Mp)\subset\gmp^+$ is an index 2 subgroup and the projection map
\[\Omega^+_\Mp/\so^+(\Mp)\to\Omega^+_\Mp/\gmp^+\]
is a (branched) double cover. 

We now want to interpret this double cover geometrically.  By \cite[Lemma 13.3]{bayermacri}, the invariant Kähler cone of $S^{[2]}$ corresponds to the polyhedron $P_1$, which is adjacent to $P_2,P_3$ and $P_3'$. 
On the other hand, the fixed locus of $i^{[2]}$ contains three planes:
\begin{enumerate}
\item the symmetric products $D^{(2)},(D')^{(2)}\cong(\Pb^1)^{(2)}\cong\Pb^{2}$, where $D,D'\subset S$ are the exceptional divisors of the blow-ups of $Q,Q'$,
\item the closure of $\{[s,i(s)]\in S^{[2]}:s\in S\setminus S^i\}$, which is isomorphic to $S/i\cong\Pb^2$.
\end{enumerate}

The fact that the planes are contained in the fixed locus implies that the induced involutions on the corresponding flops $X,X',Y$ are biregular (see \cite[Cor. 2.11]{ohashiwandel}).
Since a flop corresponds to a reflection in a $-10$-wall \cite[Rem. 5.2]{mongardiwandel}, the invariant Kähler cones of the flops $X$ and $X'$ correspond to $P_3$ and $P_3'$, and that of $Y$ to $P_2$.

As in Example \ref{exm:nonsep} one can show that $(X,j)$ and $(X',j')$ are not isomorphic for the generic choice of the curve $C\subset\Pb^2$. On the other hand, a deformation of $C\subset\Pb^2$ into itself, such that the two nodes remain nodes and $Q$ and $Q'$ are exchanged, induces a deformation of the flops which deforms $(X,j)$ into $(X',j')$.
\end{exm}

\medskip
\subsection*{Acknowledgment}
I am grateful to my advisor Klaus Hulek for many helpful discussions.

\bibliographystyle{alpha}

\begin{thebibliography}{BNWS11}

\bibitem[AST11]{artebanisartitaki}
M.~Artebani, A.~Sarti, and S.~Taki.
\newblock K3 surfaces with non-symplectic automorphisms of prime order.
\newblock {\em Math. Z.}, 268(1--2):507--533, 2011.

\bibitem[AV14a]{amerikverbitskycone}
E.~Amerik and M.~Verbitsky.
\newblock Morrison-{Kawamata} cone conjecture for hyperk{ä}hler manifolds.
\newblock 2014.
\newblock arXiv:1408.3892.

\bibitem[AV14b]{amerikverbitskyrational}
E.~Amerik and M.~Verbitsky.
\newblock Rational curves on hyperk{ä}hler manifolds.
\newblock 2014.
\newblock arXiv:1401.0479.

\bibitem[BB66]{bailyborel}
W.L. Baily and A.~Borel.
\newblock Compactification of arithmetic quotients of bounded symmetric
  domains.
\newblock {\em Ann. Math.}, 84(2):442--528, 1966.

\bibitem[BCS14]{boissierecameresarti}
S.~{Boissi{\`e}re}, C.~{Camere}, and A.~{Sarti}.
\newblock {Classification of automorphisms on a deformation family of
  {hyperkähler} fourfolds by p-elementary lattices}.
\newblock 2014.
\newblock arXiv:1402.5154.

\bibitem[Bea83a]{beauvillerem}
A.~Beauville.
\newblock Some remarks on {K\"a}hler manifolds with {$c_{1}=0$}.
\newblock In {\em Classification of {A}lgebraic and {A}nalytic {M}anifolds},
  volume~39 of {\em Progr. Math.}, pages 1--26. Birkh\"auser, Boston, MA, 1983.

\bibitem[Bea83b]{beauvilleihs}
A.~Beauville.
\newblock {Variétés Kähleriennes dont la première classe de Chern est
  nulle}.
\newblock {\em J. Differential Geom.}, 18(4):755--782, 1983.

\bibitem[Bea11]{beauvilleinv}
A.~Beauville.
\newblock Antisymplectic involutions of holomorphic symplectic manifolds.
\newblock {\em J. Topol.}, 4(2):300--304, 2011.

\bibitem[BHT13]{bayerhassetttschinkel}
A.~Bayer, B.~Hassett, and Y.~Tschinkel.
\newblock Mori cones of holomorphic symplectic varieties of {K3} type.
\newblock 2013.
\newblock arXiv:1307.2291.

\bibitem[BM13]{bayermacri}
A.~{Bayer} and E.~{Macr{\`i}}.
\newblock {MMP for moduli of sheaves on K3s via wall-crossing: nef and movable
  cones, Lagrangian fibrations}.
\newblock 2013.
\newblock arXiv:1301.6968.

\bibitem[BNWS11]{boissiereniepersarti}
S.~Boissi{\`e}re, M.~Nieper-Wi{\ss}kirchen, and A.~Sarti.
\newblock Higher dimensional {E}nriques varieties and automorphisms of
  generalized {K}ummer varieties.
\newblock {\em J. Math. Pures Appl.}, 95(5):553--563, 2011.

\bibitem[Bog78]{bogomolov}
F.~Bogomolov.
\newblock Hamiltonian {K}{\"a}hler manifolds.
\newblock {\em Soviet Math. Dokl.}, 19:1462--1465, 1978.

\bibitem[BS12]{boissieresarti}
S.~Boissi{\`e}re and A.~Sarti.
\newblock A note on automorphisms and birational transformations of holomorphic
  symplectic manifolds.
\newblock {\em Proc. Amer. Math. Soc.}, 140(12):4053--4062, 2012.

\bibitem[Fuj83]{fujikisquare}
A.~Fujiki.
\newblock On the primitive symplectic compact {K}\"ahler {$V$}-manifolds of
  dimension four.
\newblock In {\em Classification of Algebraic and Analytic Manifolds},
  volume~39 of {\em Progr. Math.}, pages 71--250. Birkh\"auser, Boston, MA,
  1983.

\bibitem[GHS09]{ghs3}
V.~Gritsenko, K.~Hulek, and G.K. Sankaran.
\newblock Abelianisation of orthogonal groups and the fundamental group of
  modular varieties.
\newblock {\em J. of Algebra}, 322:463--478, 2009.

\bibitem[GHS13]{ghs2}
V.~Gritsenko, K.~Hulek, and G.K. Sankaran.
\newblock Moduli of {K}3 surfaces and irreducible symplectic manifolds.
\newblock In {\em Handbook of moduli. {V}ol. {I}}, volume~24 of {\em Adv. Lect.
  Math. (ALM)}, pages 459--526. Int. Press, Somerville, MA, 2013.

\bibitem[HM14]{mason}
G.~Hoehn and G.~Mason.
\newblock Finite groups of symplectic automorphisms of hyperkähler manifolds
  of type {$K3^{[2]}$}.
\newblock 2014.
\newblock arXiv:1409.6055.

\bibitem[HT09]{hassetttschinkel}
B.~Hassett and Y.~Tschinkel.
\newblock Moving and ample cones of holomorphic symplectic fourfolds.
\newblock {\em Geom. Funct. Anal.}, 19(4):1065--1080, 2009.

\bibitem[Huy99]{huybrechtsbasic}
D.~Huybrechts.
\newblock Compact hyper-{K}\"ahler manifolds: basic results.
\newblock {\em Invent. Math.}, 135(1):63--113, 1999.

\bibitem[Kon92]{kondononsympl}
S.~Kondo.
\newblock Automorphisms of algebraic {K3} surfaces which act trivially on
  {P}icard groups.
\newblock {\em J. Math. Soc. Japan}, 44:75--98, 1992.

\bibitem[KS02]{kneser}
M.~Kneser and R.~Scharlau.
\newblock {\em Quadratische Formen}.
\newblock Springer Verlag, Berlin, 2002.

\bibitem[KV98]{kaledinverbitsky}
D.~Kaledin and M.~Verbitsky.
\newblock Partial resolutions of {Hilbert} type, {Dynkin} diagrams, and
  generalized {Kummer} varieties.
\newblock 1998.
\newblock arXiv:9812078.

\bibitem[Mar10]{markmanintegral}
E.~Markman.
\newblock Integral constraints on the monodromy group of the hyperkähler
  resolution of the symmetric product of a {K3} surface.
\newblock {\em Internat. J. Math.}, 21(2):169--223, 2010.

\bibitem[Mar11]{markmansurvey}
E.~Markman.
\newblock A survey of {T}orelli and monodromy results for
  holomorphic-symplectic varieties.
\newblock In {\em Complex and differential geometry}, volume~8 of {\em Springer
  Proc. Math.}, pages 257--322. Springer, Heidelberg, 2011.

\bibitem[Mar13]{markmanprime}
E.~Markman.
\newblock Prime exceptional divisors on holomorphic symplectic varieties and
  monodromy reflections.
\newblock {\em Kyoto J. Math.}, 53(2):345--403, 2013.

\bibitem[Mon13]{mongardicone}
G.~Mongardi.
\newblock A note on the {Kähler} and {Mori} cones of manifolds of
  {$K3^{[n]}$-type}.
\newblock 2013.
\newblock arXiv:1307.0393.

\bibitem[Mon14]{mongardisymplectic}
G.~Mongardi.
\newblock Towards a classification of symplectic automorphisms on manifolds of
  {$K3^{[n]}$} type.
\newblock 2014.
\newblock arXiv:1405.3232.

\bibitem[Muk88]{mukai}
S.~Mukai.
\newblock Finite groups of automorphisms of {K3} surfaces and the {M}athieu
  group.
\newblock {\em Invent. Math.}, 94:183--222, 1988.

\bibitem[MW14]{mongardiwandel}
G.~Mongardi and M.~Wandel.
\newblock Induced automorphisms on irreducible symplectic manifolds.
\newblock 2014.
\newblock arXiv:1405.5706.

\bibitem[Nik80a]{nikulin3}
V.V. Nikulin.
\newblock Finite automorphism groups of {Kähler} {K3} surfaces.
\newblock {\em Trans. Moscow Math. Soc.}, 38:71--135, 1980.

\bibitem[Nik80b]{nikulin1}
V.V. Nikulin.
\newblock Integral symmetric bilinear forms and some of their applications.
\newblock {\em Mathematics of the USSR-Izvestiya}, 14(1):103, 1980.

\bibitem[Nik83]{nikulin2}
V.V. Nikulin.
\newblock Factor groups of groups of automorphisms of hyperbolic forms with
  respect to subgroups generated by 2-reflections.
\newblock {\em J. Soviet Math.}, 22(4):1401--1475, 1983.

\bibitem[OS11]{oguisoschroer}
K.~Oguiso and S.~Schr{\"o}er.
\newblock Enriques manifolds.
\newblock {\em J. Reine Angew. Math.}, 661:215--235, 2011.

\bibitem[OW13]{ohashiwandel}
H.~Ohashi and M.~Wandel.
\newblock Non-natural non-symplectic involutions on symplectic manifolds of
  {$K3^{[2]}$-type}.
\newblock 2013.
\newblock arXiv:1305.6353.

\bibitem[OZ00]{oguisozhang}
K.~Oguiso and D.-Q. Zhang.
\newblock On {V}orontsov's theorem on {K3} surfaces with non-symplectic
  group-actions.
\newblock {\em Proc. Amer. Math. Soc.}, 128(6):1571--1580, 2000.

\bibitem[Sch73]{schmid}
W.~Schmid.
\newblock Variation of {H}odge structure: The singularities of the period
  mapping.
\newblock {\em Invent. Math.}, 22:211--319, 1973.

\bibitem[Ver13]{verbitskytorelli}
M.~Verbitsky.
\newblock A global torelli theorem for hyperk{ä}hler manifolds.
\newblock {\em Duke Math. J.}, 162(15):2929--2986, 2013.

\bibitem[VS93]{vinberg}
{\`E}.B. Vinberg and O.V. Shvartsman.
\newblock Discrete groups of motions of spaces of constant curvature.
\newblock In {\em Geometry, {II}}, volume~29 of {\em Encyclopaedia Math. Sci.},
  pages 139--248. Springer, Berlin, 1993.

\bibitem[Wol67]{wolf}
J.A. Wolf.
\newblock {\em Spaces of Constant Curvature}.
\newblock McGraw-Hill, Inc., New York, 1967.

\bibitem[Xia96]{xiao}
G.~Xiao.
\newblock Galois covers between {$K3$} surfaces.
\newblock {\em Ann. Inst. Fourier}, 46(1):73--88, 1996.

\bibitem[Yos04]{yoshikawa}
K.~Yoshikawa.
\newblock {$K3$} surfaces with involution, equivariant analytic torsion, and
  automorphic forms on the moduli space.
\newblock {\em Invent. Math.}, 156(1):53--117, 2004.

\end{thebibliography}

\end{document}